\documentclass[11pt]{article}
\usepackage[margin=1.05in]{geometry}
\usepackage{amsmath,amssymb,amsthm,mathtools,booktabs,longtable,hyperref}

\newtheorem{theorem}{Theorem}[section]
\newtheorem{proposition}[theorem]{Proposition}
\newtheorem{lemma}[theorem]{Lemma}
\newtheorem{corollary}[theorem]{Corollary}
\newtheorem{conjecture}[theorem]{Conjecture}
\theoremstyle{definition}
\newtheorem{definition}[theorem]{Definition}
\newtheorem{remark}[theorem]{Remark}

\newcommand{\bx}{\bar x}\newcommand{\by}{\bar y}
\newcommand{\CT}{\operatorname{CT}}
\newcommand{\Bal}{\operatorname{Bal}}
\newcommand{\E}{\mathbf{E}}
\newcommand{\Lat}{\mathcal{L}}
\renewcommand{\Pr}{\mathbf{P}}
\newcommand{\ff}[2]{(#1)_{#2}}
\newcommand{\Sn}{\mathfrak{S}_n}

\title{Solutions to Five Challenge Problems in Enumerative and\\
Algorithmic Combinatorics, with an Account of the\\
Human--Machine Methodology Employed}
\author{Jaideep Sai Padhi\\
\small Purdue University\\
\small \texttt{jpadhi@purdue.edu}}
\date{August 2026}

\begin{document}
\maketitle

\begin{abstract}
We report solutions to five challenge problems posed by Doron Zeilberger and
his collaborators, together with substantial partial progress on two further
problems, and we describe the method by which they were obtained.

The solved problems are: the Second Computational Chomp Challenge of Ekhad and
Zeilberger, for which we exhibit a bar with three winning opening moves; the
third challenge of Spahn and Zeilberger, asking whether the restricted
permutation counts $a_{r,s}$ and $b_{r,s}$ are holonomic for all $r,s>1$,
answered affirmatively; the First Rigorous Solid Standard Young Tableaux
Challenge, for which we prove the conjectured second-order recurrence; the
five-dimensional Geode Challenge of Amdeberhan, Kauers and Zeilberger; and
Conjectures 2a and 2b of Kauers and Zeilberger, which we obtain from a local
limit theorem for excursions of Markov-modulated random walks in cones.

Several results of independent interest arise along the way: a staircase
theorem constraining the winning opening moves of any Chomp bar, together with
a parity theorem for square bars; an explicit algebraic generating function for
reverse-Kreweras diagonal walks and a closed form for their diagonal-endpoint
counts; a one-dimensional integral representation for diagonal Geode
coefficients; and the identity that each Kauers--Zeilberger constant is a
universal factor times the square of the apex value of a discrete
cone-harmonic function.

All of the work reported here was carried out in collaboration with a large
language model. The division of labour is described in
Section~\ref{sec:disclosure}; Section~\ref{sec:method} records the verification
protocol this mode of work required, and Section~\ref{sec:negative} documents
the failures, which we regard as an essential part of the report.
\end{abstract}

\tableofcontents
\newpage

\part{Introduction and method}

\section{Introduction}\label{sec:intro}

Zeilberger and his collaborators have, over several decades, posed a large
number of concrete challenge problems, many carrying pledged donations to the
On-Line Encyclopedia of Integer Sequences in honour of the solver. The problems
are heterogeneous: some are computational records, some are requests for proofs
of empirically observed recurrences, and some are genuine open problems in
enumerative combinatorics.

Between June and August 2026 the author worked systematically through this
collection. This paper reports the outcomes and the method.

\subsection{Summary of results}

\begin{center}
\begin{tabular}{llc}
\toprule
Problem & Status & Section\\
\midrule
Second Computational Chomp Challenge & solved & \ref{sec:chomp}\\
Spahn--Zeilberger Challenge 3 (holonomicity) & solved & \ref{sec:ch3}\\
First Rigorous Solid SYT Challenge & solved & \ref{sec:syt}\\
Geode Challenge, $D=5$ (and $D=6,\dots,10$) & solved & \ref{sec:geode}\\
Kauers--Zeilberger Conjectures 2a, 2b & solved & \ref{sec:kz}\\
Spahn--Zeilberger Challenge 1 ($a_{2,2}$ operator) & partial & \ref{sec:a22}\\
Second Rigorous Solid SYT Challenge (cone exponent) & partial & \ref{sec:cone}\\
\bottomrule
\end{tabular}
\end{center}

\subsection{Disclosure of method}\label{sec:disclosure}

The mathematics reported here was produced in extended collaboration with a
large language model (Anthropic's Claude). The division of labour was as
follows. The model generated the mathematical ideas, the proofs, and the
verification code. The author directed the investigation: selected which
problems to attack and when to abandon them, executed all computations,
designed and enforced the verification protocol of
Section~\ref{sec:method}, and identified a substantial number of errors in the
model's output, a representative sample of which is catalogued in
Section~\ref{sec:negative}.

We state this at the outset for two reasons. First, it is materially relevant
to how the results should be assessed: a reader may wish to weight
machine-checked claims differently from prose arguments, and we indicate
throughout which is which. Second, the practical lessons of working in this
mode are, in our judgement, of comparable interest to the individual theorems,
and cannot be conveyed without the disclosure.

\section{Methodology}\label{sec:method}

The verification protocol below emerged from failure rather than from design.
Each of its components was adopted after an error that it would have
prevented.

\subsection{External validation precedes construction}

The governing principle is that a derivation must be checked against a quantity
independently known to be correct, as early and as cheaply as possible.
Internal consistency --- verifying a construction against itself, or against a
second construction sharing its assumptions --- is not validation.

The clearest instance arose in the work of Section~\ref{sec:a22}. A chain of
five successive reductions had each been verified against the preceding one. A
twenty-line script then evaluated the composite against five exact values
established by independent means, together with two further values the chain
had never been shown. All seven agreed. Had they not, the error would have been
localised immediately; as it was, the check converted a plausible chain into a
trustworthy one at negligible cost.

\subsection{New capabilities are gated on solved problems}

Before applying a new computational tool to an open problem, we apply it to a
problem whose answer is already established, and require exact agreement.

This detected two significant faults. In one case a computer algebra library's
multivariate creative-telescoping routine was found to fail on the library's
own documented examples --- an error in the software, not in our encoding, and
one that would otherwise have been attributed to the mathematics. In another, a
sign error in the annihilator of an exponential factor had corrupted two days
of output, and was identified only when the resulting discrepancy was
recognised as exactly twice a known boundary term.

\subsection{Failure modes are distinguished instrumentally}

A search returning no solution is ambiguous between an ansatz that is too
small and a formulation that is structurally wrong; the two require opposite
responses. We instrumented our solvers to report the number of candidate
solutions existing \emph{before} the data check was applied. Zero such
candidates indicates the former; candidates that exist but fail the data
indicates the latter. Without this instrumentation the two are
indistinguishable from outside, and we lost considerable effort enlarging
ansatz spaces within a formulation that could not have contained the answer.

\subsection{Standards of evidence}

We distinguish throughout:
\begin{itemize}
\item \textbf{proved}: a complete mathematical argument is given;
\item \textbf{machine-verified}: an identity has been confirmed symbolically or
by exact arithmetic over a stated range;
\item \textbf{supported}: a pattern has been confirmed on data but not proved,
with the range stated.
\end{itemize}
Section~\ref{sec:chomptracks} contains an instance in which a pattern confirmed
on five consecutive values proved false at the sixth. We regard the distinction
between the second and third categories as essential, and we have tried to make
it visible at every claim.

\section{The role of the automated system}\label{sec:airole}

Because the division of labour described in Section~\ref{sec:disclosure} is
unfamiliar, and because it determined both the successes and the failures
recorded here, we set it out in detail. The account is descriptive rather than
advocatory; we report what occurred.

\subsection{Division of labour}

The investigation proceeded as an extended dialogue, one conversation per
problem, over approximately two months. Within each conversation the roles were
stable.

\emph{The automated system} proposed reductions, constructed proofs, derived
closed forms, and wrote every line of the verification and computation code. It
also produced the first drafts of the manuscripts on which
Sections~\ref{sec:chomp}, \ref{sec:ch3}, \ref{sec:syt}, \ref{sec:kz} and
\ref{sec:geode} are based.

\emph{The author} selected the problems and determined when a line of attack
should be abandoned; executed all computations, on hardware the system could
not access; designed and enforced the verification protocol of
Section~\ref{sec:method}; adjudicated between competing proposals; and
identified errors, of which Section~\ref{sec:negative} records a representative
sample.

Neither role was passive. The system did not merely formalise ideas supplied to
it, and the author did not merely execute instructions. The characteristic
interaction was: a proposal, an objection or a request for a cheaper check, a
computation, and a revision --- iterated, frequently for many cycles, before
anything survived.

\subsection{The characteristic failure mode}

The dominant practical fact about this mode of work is that the system's
failures are not silences but fluent, plausible, incorrect output. A proposed
reduction that is subtly wrong looks exactly like one that is right; a solver
that returns nothing because of a floating-point defect looks exactly like one
that returns nothing because no solution exists.

This has two consequences that shaped the entire project.

First, \emph{every} substantive claim required external validation before being
built upon. The protocol of Section~\ref{sec:method} is not a matter of good
practice but of necessity: without it, an incorrect intermediate result
propagates silently into all subsequent work, and the eventual contradiction
surfaces far from its cause. The most expensive episodes recorded in
Section~\ref{sec:negative} are precisely those in which a check was deferred.

Second, the author's principal contribution was adversarial rather than
generative. The questions that repeatedly proved most productive were: against
what independently known quantity has this been checked; what would the output
look like if this were wrong; and how long will this computation take. The
third question was, empirically, the one most often neglected by the system and
most often decisive.

\subsection{Loops and their termination}

A recurring pathology was circular reasoning: the system would propose a
reduction, encounter an obstruction, propose a second reduction that reintroduced
the first obstruction in different notation, and continue indefinitely. Left
unchecked these loops consumed hours.

Two interventions terminated them reliably. The first was demanding that each
proposed route be accompanied by a statement of what would falsify it; a route
whose falsification condition could not be stated was almost always a
restatement of a previous route. The second was requiring the system to record
what had already been ruled out, and to check new proposals against that record
before elaborating them.

The problem of Section~\ref{sec:a22} illustrates both the pathology and its
resolution. Successive reformulations --- reshaping the certificate denominator
from the irreducible factors of $Q$; fixing the telescoper to the operator
already known from data and solving only for certificates; splitting the
sequence into rational and conjugate branches --- were each proposed as
breakthroughs and each failed for an independent reason. Only the pole-order
computation, which produces a \emph{number} rather than another reformulation,
terminated the sequence, by showing that all of them were attempts to outrun a
bound of roughly $70{,}000$ unknowns.

\subsection{Verification as the system's own task}

A productive pattern, once established, was to direct the system to design
checks against its own output rather than to produce more output. Three forms
recurred.

\emph{Controls with known answers.} Before any new machinery was applied to an
open problem, it was applied to a solved one. This detected a defective library
routine and a sign error in an exponential annihilator, both described in
Section~\ref{sec:negative}, either of which would otherwise have been
misattributed to the mathematics.

\emph{Deliberate negative controls.} Where a computation could succeed for the
wrong reason, a variant expected to \emph{fail} was run alongside it. In the
verification of the inclusion--exclusion representation of
Section~\ref{sec:ch3}, a control omitting the position layer was included
precisely so that agreement of the full assembly could not be attributed to the
position layer being inert; the control disagrees exactly when
$\mathcal P\ne\emptyset$, as required.

\emph{Independent re-derivation.} Where feasible, a quantity was computed twice
by routes sharing no machinery. The Chomp triple was found in one lattice and
re-derived in another, then counter-signed by a separately written engine; the
cone eigenvalue of Section~\ref{sec:cone} was obtained both spectrally and from
sequence asymptotics; the chain of Section~\ref{sec:a22} was checked against
values computed from the opposite side of the decomposition.

\subsection{What the system did not supply}

Three things had to come from outside the system.

\emph{Cost estimates.} The system consistently proposed computations without
estimating their runtime, and its estimates when requested were unreliable.
Every substantial computation in this project was sized by measurement --- a
small instance timed, then extrapolated against the known complexity --- and
several were abandoned on that basis.

\emph{Restraint.} The system's disposition is to continue. Judgements that a
line of attack was exhausted, that a problem was a trap, or that a result was
finished, were the author's; where they were deferred, effort was consumed
without return.

\emph{Calibration of evidence.} The system reported patterns confirmed on five
consecutive values with the same confidence as identities verified
symbolically. The distinction drawn in Section~\ref{sec:method}, and the
falsification recorded in Section~\ref{sec:chomptracks}, exist because that
calibration had to be imposed externally.

\subsection{Assessment}

The results of Sections~\ref{sec:chomp}--\ref{sec:geode} would not have been
obtained by the author alone; the author's mathematical background at the time
of this work comprised a single university course in multivariable calculus.
Equally, they would not have been obtained by the system alone: on the evidence
of Section~\ref{sec:negative}, an unsupervised run would have produced a
confident and substantially incorrect document. The output is a joint product,
and the verification protocol is the mechanism by which the joint product is
more reliable than either input.

\part{Combinatorial game theory}

\section{The Second Computational Chomp Challenge}\label{sec:chomp}

\subsection{The challenge}

Chomp is played on an $a\times b$ grid of cells, rows indexed downward; a move
(a \emph{bite}) at cell $(i,j)$ removes every cell $(r,c)$ with $r\ge i$ and
$c\ge j$; the player forced to take the poisoned cell $(1,1)$ loses. Positions
are exactly the Young-diagram ideals $\lambda=(\lambda_1\ge\cdots\ge\lambda_a)$.

Gale's strategy-stealing argument \cite{Gale} shows every rectangular bar other
than $1\times1$ is a first-player win, but certifies no winning move.
Computation is the only known access, and Gale already asked whether the
winning first move is always unique. The answer is negative --- Thompson
discovered that the $8\times10$ bar has two winning moves \cite{BHMS} --- but
the uniqueness phenomenon is real in low rows: Sheiner \cite{Sheiner} has
proved that every $3\times n$ bar has exactly one winning move, settling the
three-row case of Gale's question, and Garg \cite{Garg} has tabulated the
$4\times n$ $P$-positions to $n\le500$.

Ekhad and Zeilberger \cite{EZ}, computing all bars up to $14\times14$, found a
handful of doubles beyond Thompson's ($6\times13$, $9\times10$, $10\times14$,
$12\times13$) and posed as their Second Computational Challenge, with a pledged
\$100 donation: find $a,b$ such that the $a\times b$ bar has at least three
winning moves.

\begin{center}
\fbox{\parbox{0.86\textwidth}{\textbf{Answer.} The $10\times42$ bar has exactly
three winning bites: at $(5,36)$, $(7,30)$, and $(8,26)$.}}
\end{center}

\subsection{Structural theory of winning bites}

Throughout, biting the cell $(i,j)$ of the $a\times b$ bar leaves the two-value
Young diagram
\[
  B(i,j)=\bigl(b^{\,i-1},(j-1)^{\,a-i+1}\bigr),
\]
with rows $1,\dots,i-1$ of length $b$ and rows $i,\dots,a$ of length $j-1$.

\begin{theorem}[Staircase Theorem]\label{thm:stair}
If $(i,j)$ and $(i',j')$ are both winning bites of the same bar and $i<i'$,
then $j>j'$. Moreover no winning bite of an $a\times b$ bar with $a\ge2$,
$ab\ge2$ lies in row $1$; by Proposition~\ref{prop:transpose} below, none lies
in column $1$. Consequently the winning bites form a strict antichain in the
product order, there is at most one in each row and each column, and at most
$\min(a-1,b-1)$ in total.
\end{theorem}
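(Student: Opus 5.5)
The plan is to reduce everything to one basic fact about impartial games: \emph{two $P$-positions can never be joined by a single legal move}. A bite $(i,j)$ is winning exactly when $B(i,j)$ is a $P$-position. So it suffices to show that whenever two bites are comparable in the product order, one of the resulting positions can be reached from the other in one move.

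First I would prove the comparability lemma. Suppose $(i,j)\neq(i',j')$ with $i\le i'$ and $j\le j'$. Then $B(i',j')$ has rows $1,\dots,i'-1$ of length $b$ and rows $i',\dots,a$ of length $j'-1\ge j-1$. Hence $B(i,j)\subsetneq B(i',j')$.

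I would then check that the cell $(i,j)$ is present in $B(i',j')$, splitting into two cases:
\begin{itemize}
\item If $i<i'$, then row $i$ has length $b\ge j$.
\item If $i=i'$, then $j<j'$, so row $i$ has length $j'-1\ge j$.
\end{itemize}
Biting $(i,j)$ in $B(i',j')$ leaves rows $<i$ untouched, of length $b$. It truncates every row $r\ge i$ to $\min(\lambda_r,j-1)=j-1$, because every such row has length at least $j-1$. The result is exactly $B(i,j)$. So $B(i,j)$ is one move from $B(i',j')$, and the two cannot both be $P$-positions.

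Taking $i<i'$ gives the staircase claim $j>j'$. Taking $i=i'$, or symmetrically $j=j'$, shows there is at most one winning bite per row and per column. Together these give the strict antichain statement.

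For row $1$: the bite $(1,1)$ takes the poisoned cell and loses. For $j\ge2$, the bite $(1,j)$ leaves the $a\times(j-1)$ rectangle. Since $a\ge2$, this rectangle is not $1\times1$, so by Gale's strategy-stealing argument it is a first-player win, i.e.\ an $N$-position. Therefore no bite in row $1$ is winning. Column $1$ follows by applying Proposition~\ref{prop:transpose} (transposition symmetry) to this row statement. The hypothesis $ab\ge2$, with the transposed condition $b\ge2$, should be checked to match whatever that proposition requires.

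The count is then immediate. Winning bites lie in rows $2,\dots,a$ with at most one per row, and in columns $2,\dots,b$ with at most one per column, giving at most $\min(a-1,b-1)$ in total.

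I expect no real obstacle. The only delicate points are two boundary cases. One is confirming that the cell $(i,j)$ actually survives in $B(i',j')$, which includes the equal-row case. The other is the $1\times1$ and poisoned-cell edge cases in the row-$1$ argument.
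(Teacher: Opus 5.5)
Your proposal is correct and is essentially the paper's proof: both show that biting $(i,j)$ in $B(i',j')$ returns $B(i,j)$ whenever $i\le i'$, $j\le j'$, and both rule out row~$1$ by Gale's strategy-stealing theorem, with you being more careful than the paper about the equal-row case (which the paper needs for ``at most one per row'' but only argues for $i<i'$) and about the bite $(1,1)$. Your closing caveat is well placed: the column-$1$ exclusion and the bound $\min(a-1,b-1)$ genuinely require $b\ge2$, since the $a\times1$ bar has the winning bite $(2,1)$ in column~$1$.
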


\begin{proof}
Suppose $i<i'$ and $j\le j'$. Bite the cell $(i,j)$ in $B(i',j')$. Rows
$1,\dots,i-1$ lie above the bite and retain length $b$. Rows $i,\dots,i'-1$
have length $b$ in $B(i',j')$, and $b\ge j-1$, so they truncate to $j-1$. Rows
$i',\dots,a$ have length $j'-1\ge j-1$, so they likewise truncate to $j-1$. The
result is exactly $B(i,j)$. Two winning bites leave $P$-positions, and no
$P$-position is one move from another.

For the second claim, a row-$1$ bite leaves a full $a\times(j-1)$ rectangle, an
$N$-position by Gale (or, for $j=2$, a column, which is $N$ for $a\ge2$).
\end{proof}

The three bites of $10\times42$ --- rows $5<7<8$, columns $36>30>26$ --- form
the first three-step staircase observed. Empirically the staircase is usually
tight: in sixteen of the eighteen known doubles, and in the triple, consecutive
winning bites are one or two rows apart. It is not always tight; the two
side-$16$ doubles split by six rows (Table~\ref{tab:bites}).

\begin{proposition}[Transpose closure]\label{prop:transpose}
Let $D=(b^p,c^q)$ with $c<b$ and $p+q=a$. Then $D^{T}=(a^{c},p^{\,b-c})$. The
class of two-value diagrams is therefore closed under transposition, the
induced map on parameters is the involution $(p,c)\mapsto(c,p)$, and the
winning bites of the $a\times b$ bar correspond bijectively to those of the
$b\times a$ bar.
\end{proposition}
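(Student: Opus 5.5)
The plan is to compute the conjugate partition of $D=(b^p,c^q)$ directly, and then to use that conjugation preserves the game.

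\emph{Conjugate.} Column $k$ of $D$ has length equal to the number of rows whose length is at least $k$. For $1\le k\le c$ every row reaches column $k$, so the column length is $p+q=a$. For $c<k\le b$ only the first $p$ rows reach column $k$, so the column length is $p$. Hence $D^T=(a^{c},p^{\,b-c})$. This is again a two-value diagram, provided the degenerate cases are handled.

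\emph{Degenerate cases and the parameter map.} When $c=0$ the diagram $D$ is the rectangle $b^p$, and when $p=a$ it is the full bar. These should be allowed as degenerate two-value shapes, so that closure is a statement about the family $(b^p,c^{a-p})$ with $0\le c<b$ and $0\le p\le a$. The transposed shape lives in the $b\times a$ bar: it has $c$ rows of full length $a$ and then $b-c$ rows of length $p$. So in the $b\times a$ frame its parameters are (number of full rows, short length) $=(c,p)$. This is the involution $(p,c)\mapsto(c,p)$, and applying the formula twice returns $D$.

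\emph{Bijection of winning bites.} In the paper's notation, $B(i,j)$ has $p=i-1$ and $c=j-1$. By the conjugate formula, $B(i,j)^T$ is exactly the diagram left by biting $(j,i)$ in the $b\times a$ bar. Transposition is a bijection on Young diagrams, and it commutes with the move relation, because a bite at $(r,s)$ in $\lambda$ corresponds to a bite at $(s,r)$ in $\lambda^T$. It also fixes the poisoned cell $(1,1)$. An induction on the number of cells therefore shows that $\lambda$ is a $P$-position if and only if $\lambda^T$ is. It follows that $(i,j)$ is a winning bite of $a\times b$ exactly when $(j,i)$ is a winning bite of $b\times a$.

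The only real obstacle is bookkeeping: the boundary cases $c=0$ and $p=a$, and the requirement $c<b$ versus $p<a$, must be stated so that the involution is well defined on the whole family. The game-theoretic part is immediate from the symmetry of the rules.
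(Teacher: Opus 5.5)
Your proof is correct and follows the paper's argument: the same count of rows of length at least $k$ gives the column heights of $D$, and the observation that transposition is an automorphism of Chomp fixing the poisoned cell shows it preserves $P$-positions. Your identification of $B(i,j)^{T}$ with the diagram left by the bite at $(j,i)$ in the $b\times a$ bar, and your induction on the number of cells, spell out what the paper compresses into a single sentence.
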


\begin{proof}
Column $k$ of $D$ has height equal to the number of rows of length at least
$k$. For $k\le c$ every row qualifies, giving height $p+q=a$; for $c<k\le b$
only the $p$ rows of length $b$ qualify, giving height $p$; for $k>b$ the
height is zero. Applying the map twice returns $(b^p,c^{\,a-p})=D$.
Transposition is an automorphism of the game fixing the poisoned square, so it
preserves $P$-positions.
\end{proof}

\begin{lemma}[$L$-shapes]\label{lem:L}
The diagram $(b,1^{\,a-1})$ is a $P$-position if and only if $a=b$.
\end{lemma}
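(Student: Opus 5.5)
The L-shape $(b,1^{a-1})$ consists of the poisoned cell $(1,1)$, an arm of $b-1$ further cells along row $1$, and a leg of $a-1$ further cells down column $1$. I will identify it with two-heap Nim on heaps of sizes $b-1$ and $a-1$, and then appeal to the standard fact that $(m,n)$ is a $P$-position of two-heap Nim if and only if $m=n$.

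\emph{Step 1: the moves of the L-shape.} A bite at $(i,j)$ with $i\ge2$ and $j\ge2$ removes nothing, since no such cell exists in the diagram, so it is not a legal move. A bite at $(1,j)$ with $j\ge2$ removes exactly the cells $(1,c)$ for $c\ge j$, because all rows below row $1$ have length $1$. Its effect is to shorten the arm from $b-1$ to $j-2$, so it can reduce the arm to any smaller size. Symmetrically, a bite at $(i,1)$ with $i\ge2$ shortens the leg to any smaller size $i-2$; this also follows by transposition, in the spirit of Proposition~\ref{prop:transpose}. The only other move is the bite at $(1,1)$, which is the losing move.

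\emph{Step 2: the terminal convention.} The position with both heaps empty is the single poisoned cell $1\times1$. There the player to move must take it and loses, so this position is a $P$-position. That matches Nim's normal-play convention, in which the empty position is $P$. Moreover, taking $(1,1)$ from any larger L-shape loses immediately, so that move never helps and can be ignored in the $P/N$ recursion.

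\emph{Step 3: the induction.} I prove by induction on $a+b$ that the L-shape is $P$ exactly when $a-1=b-1$. If $a=b$, every legal move makes the two heaps unequal, so by induction it reaches an $N$-position. If $a\ne b$, shortening the longer heap to equal the shorter gives a $P$-position by induction, so the position is $N$. The base case is $a=b=1$, the single poisoned cell.

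The only delicate point is Step 2: the misère-looking poison rule must really coincide with normal-play Nim on the two heaps. I will handle it explicitly, either by declaring that eating the poison is never an option worth considering or by counting the lone poisoned cell as the terminal $P$-position. The rest is routine.
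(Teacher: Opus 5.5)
Your proof is correct and takes the same approach as the paper. Both treat the two arms of lengths $b-1$ and $a-1$ as independent heaps. Equal arms give a $P$-position because every move unbalances them and the opponent re-equalises; unequal arms give an $N$-position because one move equalises them. Your induction on $a+b$ and your explicit treatment of the lone poisoned cell as the terminal $P$-position just make rigorous what the paper's two-line strategy argument leaves informal.
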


\begin{proof}
The diagram consists of two arms, of lengths $b-1$ and $a-1$, meeting at the
poisoned square. If the arms are equal, any move shortens exactly one of them
and the opponent restores equality; the terminal position has equal arms and is
a loss for the player to move. If the arms are unequal, a single move equalises
them.
\end{proof}

\begin{theorem}\label{thm:square}
The bite at $(2,2)$ is a winning opening move of the $a\times b$ bar if and
only if $a=b$. Consequently the number of winning opening moves of a square bar
is \textbf{odd}; in particular no square bar has exactly two.
\end{theorem}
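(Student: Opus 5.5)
Biting the $a\times b$ bar at $(2,2)$ leaves $B(2,2)=(b,1^{\,a-1})$, the $L$-shape of Lemma~\ref{lem:L}. The bite is winning exactly when this diagram is a $P$-position, and by Lemma~\ref{lem:L} that happens if and only if $a=b$. This settles the first assertion directly. The only check needed is that $B(i,j)$ with $i=j=2$ gives one row of length $b$ followed by $a-1$ rows of length $1$, which is immediate from the formula for $B(i,j)$.

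For the parity claim I will take $a=b$ and use the transpose involution of Proposition~\ref{prop:transpose}. On a square bar it maps the set $W$ of winning bites to itself. Concretely, the bite at $(i,j)$ leaves $(b^{\,i-1},(j-1)^{\,a-i+1})$, that is $p=i-1$ and $c=j-1$. Transposing sends this to $(a^{\,j-1},(i-1)^{\,b-j+1})$, which for $a=b$ is $B(j,i)$. Hence the bite $(i,j)$ wins iff $(j,i)$ wins. Two degenerate cases need a moment's care: when $c=0$ the diagram is a row-$1$ bite, and when $j-1=b$ the bite does not exist. Both are harmless, since Theorem~\ref{thm:stair} excludes row and column $1$ anyway.

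Now the involution $(i,j)\mapsto(j,i)$ acts on $W$, so $|W|$ has the same parity as the number of its fixed points, namely the diagonal winning bites $(k,k)$. By Theorem~\ref{thm:stair} the winning bites form a strict antichain. Two distinct diagonal cells $(k,k)$ and $(k',k')$ with $k<k'$ are comparable, so at most one diagonal bite wins. The first part shows $(2,2)$ wins, so it is exactly one. Hence $|W|\equiv 1 \pmod 2$. This needs $a\ge2$; the $1\times1$ bar is degenerate and excluded, as is implicit in the statement. In particular $|W|\neq2$.

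The main obstacle is purely bookkeeping: confirming that transposition on square bars really induces the coordinate swap on bites, and handling the boundary cases. Everything else follows from the lemma and the antichain property.
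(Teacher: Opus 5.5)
Your proof is correct and follows the paper's argument: Lemma~\ref{lem:L} for the first part, then the transpose involution $(i,j)\mapsto(j,i)$ on winning bites of a square bar, with Theorem~\ref{thm:stair} allowing at most one diagonal fixed point, which is $(2,2)$. Your degenerate-case remarks are unnecessary, since $B(i,j)^T=B(j,i)$ holds for every bite of a square bar including $j=1$, but your explicit exclusion of the $1\times1$ bar is a worthwhile precision: that bar has zero winning moves, and the paper leaves the exclusion implicit.
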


\begin{proof}
Biting $(2,2)$ leaves $B(2,2)=(b,1^{\,a-1})$, which by Lemma~\ref{lem:L} is a
$P$-position precisely when $a=b$. For the parity statement take $a=b$. By
Proposition~\ref{prop:transpose} transposition maps the winning bites of the
bar to themselves, acting as $(i,j)\mapsto(j,i)$, with fixed points the
diagonal bites. By Theorem~\ref{thm:stair} at most one diagonal bite can be
winning, and by the first part $(2,2)$ is winning. Hence the involution has
exactly one fixed point and the remaining winning bites are paired.
\end{proof}

This is consistent with the census below, in which every square through
$19\times19$ has a unique winning move, and it explains the absence of square
doubles: a square bar with any off-diagonal winning bite would have at least
three.

\subsection{Algorithm}

One retrograde solve of the ideal lattice of an $A\times B$ box determines the
winning-move count of every bar $a\times b$ with $a\le A$, $b\le B$, since each
such bar is a position in the box. We rank positions by the combinatorial
number system on $d_k=\lambda_k+A-k$ (strictly decreasing),
\[
  \mathrm{rank}(\lambda)=\sum_k\binom{d_k}{A-k+1},
\]
a bijection onto $\{0,\dots,\binom{A+B}{A}-1\}$ that is monotone under
componentwise order, so a single increasing-rank sweep is a valid
dynamic-programming order. The children of $\lambda$ under a bite in row $i$ at
threshold $t$ (all rows $\ge i$ capped at $t$) have ranks computable in $O(1)$
amortised via the hockey-stick identity
\[
  \sum_{m=m_1}^{m_2}\binom{t-1+m}{t-1}=\binom{t+m_2}{t}-\binom{t+m_1-1}{t}
\]
applied to the capped rows.

The production engine stores one bit per position ($P$/$N$), detects
$N$-positions with early exit at the first winning move, recovers exact counts
for the bars in a final pass, and parallelises by area layers, all positions of
area $s$ depending only on smaller areas. On thirteen cores it solves the
$\binom{55}{10}\approx2.9\times10^{10}$-state $10\times45$ box --- the box in
which the triple was found --- in under half an hour.

\subsection{Validation}

Three independent checks were applied.

All engines agree with brute-force game search on six small boxes, both in
per-bar winning-move counts and in the full $P$-position census. The published
record is reproduced exactly: Thompson's $8\times10$ has two winning moves, at
$(4,9)$ and $(5,6)$, and the Ekhad--Zeilberger doubles $6\times13$ and
$10\times14$ check. Finally the triple was found in the $10\times45$ box,
re-derived from the different $10\times42$ box, and counter-signed by an
independent implementation with a different iteration order and no early exit.

\subsection{The census}

In the searched region --- all $a\le b\le19$; $4\times b$, $b\le300$;
$5\times b$, $b\le150$; $6,7\times b$, $b\le110$; $8\times b$, $b\le60$;
$9\times b$, $b\le55$; $10\times b$, $b\le45$ --- there are exactly eighteen
bars with exactly two winning moves and one with three:
\begin{center}
$6\times13,\ 6\times93,\ 7\times29,\ 7\times30,\ 7\times57,\ 8\times10,\
8\times22,\ 8\times23,\ 9\times10,$\\
$9\times26,\ 10\times14,\ 10\times29,\ 10\times33,\ 10\times35,\ 11\times18,\
12\times13,\ 13\times16,\ 14\times16$
\end{center}
together with transposes. Every other bar in the region, including every square
through $19\times19$, has a unique winning move; in particular $4\times b$ and
$5\times b$ bars are unique throughout their ranges, consistent with --- and
for four rows implicit in --- Garg's tabulation \cite{Garg}.

Three regularities stand out. \emph{Adjacent pairs:} doubles arrive at
consecutive widths ($7\times29/30$, $8\times22/23$) or consecutive heights
($13\times16$, $14\times16$). \emph{Hot and barren lines:} no bar with a side
of $15$ has a multiple move, while side $16$ carries two doubles; row $10$
carries six doubles and the triple. \emph{Long gaps:} row $6$ is silent from
$14$ to $92$, then hits at $93$.

\begin{table}[h]\centering
\caption{Winning bites of every known multi-move bar, in the staircase form
guaranteed by Theorem~\ref{thm:stair}. The crossing families
$(p,q)=(i{-}1,a{-}i{+}1)$ can be read off the bite rows.}\label{tab:bites}
\begin{tabular}{llll}
\toprule
bar & bites & bar & bites\\
\midrule
$6\times13$ & $(4,12),(5,9)$ & $10\times14$ & $(5,13),(7,9)$\\
$6\times93$ & $(2,52),(3,39)$ & $10\times29$ & $(6,16),(8,12)$\\
$7\times29$ & $(6,25),(7,20)$ & $10\times33$ & $(7,23),(9,18)$\\
$7\times30$ & $(4,21),(5,20)$ & $10\times35$ & $(7,24),(8,21)$\\
$7\times57$ & $(3,42),(4,26)$ & $11\times18$ & $(9,17),(10,16)$\\
$8\times10$ & $(4,9),(5,6)$ & $12\times13$ & $(10,11),(11,9)$\\
$8\times22$ & $(2,9),(3,7)$ & $13\times16$ & $(5,6),(11,5)$\\
$8\times23$ & $(7,18),(8,16)$ & $14\times16$ & $(3,12),(9,10)$\\
$9\times10$ & $(7,8),(9,5)$ & $\mathbf{10\times42}$ &
  $\mathbf{(5,36),(7,30),(8,26)}$\\
$9\times26$ & $(2,11),(4,10)$ & & \\
\bottomrule
\end{tabular}
\end{table}

\subsection{The structure of multiplicity}\label{sec:chomptracks}

The count decomposes. Biting the $a\times b$ bar at $(i,j)$ leaves the
two-block position with $p=i-1$ full rows and $q=a-p$ rows of length $t=j-1$,
so the number of winning moves of the bar equals the number of families
$(p,q)$ with $p+q=a$ whose \emph{$P$-track}
\[
  \{(w,t):\ (w^p,t^q)\text{ is a }P\text{-position}\}
\]
passes through width $w=b$. Empirically each track is sparse and quasi-linear
--- essentially one $P$-cell per width --- so doubles are crossings of two
tracks and the triple is a threefold crossing, constrained by
Theorem~\ref{thm:stair} to a staircase. The two seven-row doubles, for
instance, are the crossings $(5,2)$--$(6,1)$ at $b=29$ and $(3,4)$--$(4,3)$ at
$b=30$.

Sparsity is provable in the thin-tail case.

\begin{proposition}\label{prop:tracksunique}
For each $(b,p)$ there is at most one $q$ for which $(b^p,1^q)$ is a
$P$-position.
\end{proposition}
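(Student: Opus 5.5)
The statement is a direct consequence of the fact, used already in the proof of Theorem~\ref{thm:stair}, that no $P$-position is one move from another. Fix $b$ and $p$, and suppose for contradiction that $(b^p,1^{q})$ and $(b^p,1^{q'})$ are both $P$-positions with $q<q'$. The idea is to exhibit a single bite taking the second diagram to the first.

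The bite at cell $(p+q+1,1)$ of $(b^p,1^{q'})$ does exactly this. That cell exists, since the diagram has $p+q'\ge p+q+1$ rows and every row has length at least $1$. Its effect is as follows:
\begin{itemize}
\item rows $1,\dots,p+q$ lie strictly above the bite and are untouched;
\item rows $p+q+1,\dots,p+q'$ are deleted entirely.
\end{itemize}
The result is $(b^p,1^q)$.

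Two cases need a word. If $p+q+1=1$, i.e.\ $p=q=0$, the bite would be the poisoned square itself, which is a losing move rather than a transition between positions. In this degenerate case the first diagram is empty, and the convention must be fixed once. Under the paper's convention that taking $(1,1)$ loses, the position $(1)$ (the lone poisoned square) is the terminal $P$-position. So I would treat $p=0$ separately. For $p=0$ and $b\ge2$, the diagram $(b^0,1^q)=(1^q)$ depends only on $q$. A single column of height $q$ is a $P$-position only for $q=1$, since from any taller column one moves to $(1)$. Hence uniqueness holds directly.

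For $p\ge1$ the bite row satisfies $p+q+1\ge2$, so the move is a legal non-poisoned bite. This contradicts the hypothesis that both diagrams are $P$-positions. The same works for $b=1$, where the diagram is a column of height $p+q$ and the family index is immaterial.

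The only real obstacle is bookkeeping at the boundary: the case $q=0$ (the diagram $(b^p)$ is a rectangle, which is fine since the argument covers it) and the $p=0$ convention. The core step is one line and mirrors the proof of Theorem~\ref{thm:stair}: truncating the thin tail from below is a single move, so two members of the same track at the same width are comparable by one bite.
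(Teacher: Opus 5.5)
Your proof is correct and is essentially the paper's argument: the bite at $(p+q+1,1)$ deletes the tail rows and takes $(b^p,1^{q'})$ to $(b^p,1^{q})$, so any two members of the family are one move apart and cannot both be $P$-positions. Your separate treatment of the degenerate case $p=0$ is harmless extra care that the paper's one-line proof omits.
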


\begin{proof}
For $q'<q$ the diagram $(b^p,1^{q'})$ is reachable from $(b^p,1^{q})$ by
removing the cell in row $p+q'+1$, column $1$. The family is totally ordered
under the move relation, and no $P$-position admits a move to a $P$-position.
\end{proof}

Write $q(b,p)$ for that value of $q$ when it exists. Exact computation gives:

\begin{center}
\begin{tabular}{cccccccl}
\toprule
$p$ & $b=2$ & $3$ & $4$ & $5$ & $6$ & $7$ & apparent law\\
\midrule
$1$ & $1$ & $2$ & $3$ & $4$ & $5$ & $6$ & $q=b-1$ (Lemma~\ref{lem:L})\\
$2$ & $1$ & $2$ & $4$ & $5$ & $7$ & $8$ & integers not divisible by $3$\\
$3$ & $1$ & $3$ & $5$ & $7$ & $9$ & $\mathbf{12}$ & $q=2b-3$, \textbf{false at $b=7$}\\
$4$ & $1$ & $4$ & $7$ & $10$ & $13$ & $\mathbf{14}$ & $q=3b-5$, \textbf{false at $b=7$}\\
\bottomrule
\end{tabular}
\end{center}

The $p=1$ row is Lemma~\ref{lem:L} and is a theorem. The law $q=2b-3$ holds for
$b=2,\dots,6$ --- five consecutive values, no exceptions --- and fails at
$b=7$, where $q(7,3)=12$.

The mechanism is explicit and worth recording. The $p=3$ slice is almost closed
under moves: from $(b^3,1^q)$ a bite lands either in the same family, or in a
rectangle (an $N$-position by Gale), or in one of exactly two auxiliary
families,
\[
  A(b,c,q)=(b,c,c,1^q),\qquad B(b,c,q)=(b,b,c,1^q),\qquad 1\le c<b,
\]
arising from bites in rows $2$ and $3$ respectively. An induction proving
$q(b,3)=2b-3$ would therefore require these auxiliary shapes to be
$N$-positions at the relevant parameters. They are not: $B(7,6,11)$ is a
$P$-position, so from $(7^3,1^{11})$ the bite at $(3,7)$ wins, and
$(7^3,1^{11})$ is an $N$-position. Direct computation confirms $q(7,3)=12$.

The failure is not isolated. The row $p=4$ exhibits the same pattern: the law
$q=3b-5$ holds for $b=2,\dots,6$ and predicts $q(7,4)=16$, whereas direct
computation gives $14$. Two independent instances of five consecutive
confirmations followed by failure at the sixth value seem to us to settle the
status of these apparent laws; the remaining rows are, on present evidence,
coincidences. This is consistent with Zeilberger's conclusion
that three-row Chomp is not governed by a periodic structure \cite{Zchaos} and
with the probabilistic renormalisation analysis of Friedman and Landsberg
\cite{FL}. We record it as the clearest methodological lesson of the project:
five confirmations are not a pattern.

\subsection{Quasi-periodicity and rotation numbers}

For three-row Chomp the tracks obey laws of unexpected exactness. Writing the
$n$-th $P$-cell of the family $(2,1)$ (positions $(w,w,t)$) as $(w_n,t_n)$, we
find, over $247$ cells up to $w=600$,
\[
  w_n=(1+\sqrt2)\,n+O(1),\qquad t_n=\tfrac{2+\sqrt2}{2}\,n+O(1),
\]
with error confined to a window of width less than $3$ and no drift; the family
$(1,2)$ (positions $(w,t,t)$) hits essentially every $t$, with
$w(t)=\tfrac{2+\sqrt2}{2}\,t+O(1)$, window less than $2.4$ over $351$ cells.
The slopes' continued fractions match $\sqrt2=[1;\overline2]$ and
$\tfrac{2+\sqrt2}{2}$ through all computed partial quotients.

\begin{conjecture}
The diagonal families of three-row Chomp are quasi-periodic with bounded
discrepancy and rotation numbers in $\mathbb{Q}(\sqrt2)$.
\end{conjecture}

The exact three-row machinery of Brouwer et al.\ \cite{BHMS}, in which
Sheiner's uniqueness theorem \cite{Sheiner} is proved, is the natural arena in
which to attack this. It is finer than, and consistent with, the
renormalisation picture of \cite{FL}; Zeilberger's assessment of three-row
Chomp as chaotic \cite{Zchaos} referred to the absence of eventual
periodicity, which bounded-error quasi-periodicity with irrational rotation
number precisely explains.

For the higher families that govern the census the tracks appear to be
multi-branch and their arithmetic remains open. The adjacent-pair phenomenon is
not a single crossing lingering: the bite rows show that adjacent doubles arise
from different family pairs ($7\times29$ crosses $(5,2)$--$(6,1)$ while
$7\times30$ crosses $(3,4)$--$(4,3)$; $8\times22$ crosses $(1,7)$--$(2,6)$
while $8\times23$ crosses $(6,2)$--$(7,1)$), so the clustering of crossings at
adjacent widths across distinct track pairs is itself the phenomenon demanding
explanation. Whether triples occur infinitely often, and where the next one
lies, now looks like a question about simultaneous approximation of these
rotation numbers.

\subsection{The First Computational Chomp Challenge}

Ekhad and Zeilberger's First Computational Challenge, with a pledged \$500
donation, asks for all winning moves of the $1001\times1003$ bar. We did not
solve it, and record here why we believe it to be out of reach by the methods
above.

By Theorem~\ref{thm:stair} the problem is to decide, for each of the
$\approx10^{6}$ candidate bites, whether the resulting two-value diagram is a
$P$-position; equivalently, by the track decomposition, to decide for each of
the $1000$ families $(p,q)$ with $p+q=1001$ whether its $P$-track passes
through width $1003$. The state space of a direct solve is
$\binom{2004}{1001}$, entirely beyond reach.

The structural obstruction is that the two-value diagrams are not closed under
all moves: a bite into the protruding rows of $(b^p,c^q)$ produces a
three-value diagram, and iterating produces diagrams with unboundedly many
distinct row lengths. There is therefore no finite subfamily whose
$P$-positions can be determined internally, and the track laws --- the only
visible route to extrapolation --- are precisely what
Section~\ref{sec:chomptracks} shows to be unreliable beyond the computed range.

\part{Restricted permutations}

\section{Holonomicity of $a_{r,s}$ and $b_{r,s}$}\label{sec:ch3}

\subsection{Statement}

For integers $r,s\ge 1$ set
\[
  a_{r,s}(n)=\#\{\pi\in\Sn:\pi_{i+r}-\pi_i\ne s\ \ (1\le i\le n-r)\},
  \qquad
  b_{r,s}(n)=\#\{\pi\in\Sn:|\pi_{i+r}-\pi_i|\ne s\}.
\]
Spahn and Zeilberger \cite{SZ} ask, as their third challenge with a pledged
\$300 donation, whether these sequences are holonomic for all $r,s>1$,
observing that no general theory was available. The case $(1,1)$ is classical:
$a_{1,1}$ satisfies Riordan's recurrence and $b_{1,1}$ is the Hertzsprung
problem, \texttt{A002464}. Kauers and Koutschan \cite{KK} guessed an order-$8$,
degree-$11$ recurrence for $a_{2,2}$ from $35$ terms; proving it is the first
challenge of \cite{SZ}, discussed in Section~\ref{sec:a22}.

\begin{theorem}\label{thm:hol}
For every fixed $r,s\ge 1$, both $a_{r,s}$ and $b_{r,s}$ are holonomic.
\end{theorem}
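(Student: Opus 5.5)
We will argue by inclusion--exclusion over sets of ``bad'' positions, followed by a closure argument for holonomic functions; the control mentioning a ``position layer'' in Section~\ref{sec:airole} indicates that this is the natural organisation. For a set $S\subseteq\{1,\dots,n-r\}$ of indices, let $N_S(n)$ count permutations with $\pi_{i+r}-\pi_i=s$ for every $i\in S$. Then
\[
a_{r,s}(n)=\sum_{S}(-1)^{|S|}N_S(n).
\]
For $b_{r,s}$ we do the same, but attach a sign choice $\epsilon_i\in\{\pm1\}$ to each $i\in S$ and impose $\pi_{i+r}-\pi_i=\epsilon_i s$.

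The imposed constraints form a graph on positions whose edges are $i\to i+r$. Its components are paths inside residue classes mod $r$, so a chosen $S$ is a union of ``runs'' of consecutive elements within the classes $i\equiv c \pmod r$. A run of length $k$ forces $k+1$ positions to carry values forming a chain $v, v+\epsilon s,\dots$, an arithmetic progression with step $\pm s$ (for $b$, a lattice walk with steps $\pm s$). The key reduction is to encode such a configuration as two layers:
\begin{itemize}
\item a \emph{position layer}: the placement of chains along the residue classes;
\item a \emph{value layer}: the embedding of the chains as disjoint progressions into $\{1,\dots,n\}$.
\end{itemize}
Each layer is a linear arrangement of blocks governed by a finite-state transfer structure. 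The states record the last $r$ positions, respectively the last $s$ values mod $s$, together with boundary effects. The remaining $n-|\text{covered}|$ free values and positions are matched arbitrarily, which contributes a factor $m!$.

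Accordingly we write
\[
a_{r,s}(n)=\sum_{m}(-1)^{n-m}\,m!\;c(n,m),
\]
where $c(n,m)$ counts compatible pairs (position layer, value layer) having $m$ blocks in total. By a finite-automaton transfer-matrix argument, $\sum_{n,m}c(n,m)x^ny^m$ is rational; its positivity should let us also handle a bijective correspondence between chains in the two layers. From this:
\begin{enumerate}
\item Rational functions are D-finite, so $c(n,m)$ is holonomic in $(n,m)$.
\item Multiplying by $m!$ and $(-1)^{n-m}$ preserves holonomicity.
\item Summing over $m$, a definite sum over a finite natural range, preserves holonomicity (Zeilberger's theorem and closure properties).
\end{enumerate}
Hence $a_{r,s}(n)$ is holonomic. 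The case $b_{r,s}$ is identical, with the signs absorbed into finitely many extra automaton states.

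The main obstacle is the coupling between the two layers. A chain occupies positions $p,p+r,\dots$ and values $v,v\pm s,\dots$, and the \emph{matching} of which position-chain gets which value-chain is not local. We plan to handle it as follows. Chains of equal length $k$ are interchangeable, so the count factorises as $\prod_k (\#\text{chains of length }k)!$ times separate position and value counts. This turns $c$ into a sum over compositions, that is, a multi-sum with factorials. We would prove holonomicity of the generating function in infinitely many length-marking variables by instead marking with a single variable and using the exponential-type structure: each chain of length $k$ receives weight $u^{k}$ in both layers, and pairing is enforced by the Hadamard-type product $\sum_k k!\,[\cdot]$. If this is too delicate, the fallback is to treat the ordered-chain pairing as a diagonal of a rational function in two copies of the variables and invoke Lipshitz's theorem that diagonals of D-finite series are D-finite. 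The place where the argument could break is verifying that each layer is genuinely finite-state when chains in different residue classes interleave. This requires $r$ and $s$ fixed, which is exactly the hypothesis.
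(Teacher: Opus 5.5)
Your setup, $a_{r,s}(n)=\sum_S(-1)^{|S|}N_S(n)$ with the forced chains, is correct. The gap is at the step you yourself call the main obstacle, and the formula $a_{r,s}(n)=\sum_m(-1)^{n-m}m!\,c(n,m)$, with $c$ having a rational generating function, already assumes that obstacle has been overcome.

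For fixed $S$ the position chains and their lengths are determined. $N_S(n)$ counts the ways to distribute these chains among the $s$ value residue classes, and to order them inside each class, so that every class is filled exactly. For $s=1$ every ordering of the $m$ chains is admissible, which is where the classical $m!$ comes from. For $s\ge2$ it is not. The constraint couples the two layers through the whole multiset of (unbounded) chain lengths, so a transfer matrix on either layer alone cannot see it. On its own, each layer is just a choice of compositions of its residue classes, so finite-stateness is not where the difficulty lies.

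Neither repair enforces the coupling. Weighting each chain by $u^{k}$ in both layers and taking a one-variable Hadamard product matches only the total length, which is $n$ on both sides anyway, not the multiset of lengths. A diagonal of two copies pairs the $j$-th chain in position order with the $j$-th chain in value order, whereas the actual pairing is an arbitrary length-preserving bijection. So the first step of your closure chain has nothing established to act on. You also leave untreated that the class sizes $\lfloor (n-c)/r\rfloor+1$ and $\lfloor (n-c)/s\rfloor+1$ are affine in $n$ only on residue classes. That forces sectioning $n$ modulo $\operatorname{lcm}(r,s)$ and then interlacing.

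The paper breaks the coupling with the interleaving bijection $\sigma_k=v^{-1}(\pi_{p(k)})$, where $p=\iota_{n,r}$ and $v=\iota_{n,s}$. Concatenating the residue classes makes step-$r$ positions and step-$s$ values adjacent, except at $r-1$ position seams and $s-1$ value seams ($2(s-1)$ for $b_{r,s}$). Value blocks may then again be placed in any order, as when $r=s=1$. The bounded seam data is removed by a finite inclusion--exclusion over $T\subseteq\mathcal P$, using a multivariate umbral functional $j_0!\cdots j_q!$ rather than a single $m!$.

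If you prefer to keep your two-layer picture, the exponential formula also works, provided each chain carries both layers at once. Give a chain of length $k$ in position class $c$ and value class $c'$ the weight $(-1)^{k-1}z_cw_{c'}(x_cy_{c'})^{k}$. Labelling the chains and dividing by $m!$ then gives
\[
a_{r,s}(n)=\Bigl[\textstyle\prod_{c}x_c^{|P_c|}\prod_{c'}y_{c'}^{|V_{c'}|}\Bigr]\Bigl\langle\exp\Bigl(\sum_{c,c'}z_cw_{c'}\frac{x_cy_{c'}}{1+x_cy_{c'}}\Bigr)\Bigr\rangle .
\]
Here $|P_c|,|V_{c'}|$ are the position and value class sizes, and $z_c^{j}\mapsto j!$, $w_{c'}^{j}\mapsto j!$ count the orderings inside each class. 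For $b_{r,s}$, replace $u/(1+u)$ by $(u-u^{2})/(1+u)$ with $u=x_cy_{c'}$. This reproduces, for instance, $a_{2,2}(4)=18$ and $b_{2,2}(4)=16$. The exponential is exactly what absorbs the $\prod_k m_k!$ symmetry you were reaching for. Closure under the umbral transform, coefficient extraction at quasi-affine exponents, and interlacing then finish the argument. As written, however, your proposal does not contain this mechanism.
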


The obstruction to a general result is not the existence of a decomposition but
its uniformity: a decomposition whose number of auxiliary variables grows with
$n$ yields no holonomicity statement. What we supply is a description of the
constraint whose exceptional data stays bounded as $n\to\infty$.

\subsection{The interleaving bijection}

\begin{definition}
For $n,k\ge 1$ let $\iota_{n,k}$ list $1,\dots,n$ in the order obtained by
concatenating the $k$ arithmetic progressions of step $k$ in increasing order
of least element:
\[
  1,\ 1+k,\ 1+2k,\dots,\quad 2,\ 2+k,\dots,\quad\dots,\quad k,\ 2k,\dots
\]
\end{definition}

Interleaving turns a step of $k$ into a step of $1$, except where one
progression is exhausted and the next begins. We call those places
\emph{seams}.

\begin{lemma}[Seam count]\label{lem:seam}
Let $k\ge1$ and $n\ge k$. The number of indices $j\in\{1,\dots,n-1\}$ with
$\iota_{n,k}(j+1)-\iota_{n,k}(j)\ne k$ is exactly $k-1$. The blocks
$P_c=\{c,c+k,c+2k,\dots\}\cap[1,n]$ have lengths
\begin{equation}\label{eq:blocklen}
  |P_c|=\Bigl\lfloor\frac{n-c}{k}\Bigr\rfloor+1,\qquad c=1,\dots,k,
\end{equation}
and the seams occur at the partial sums $|P_1|+\cdots+|P_c|$ for
$c=1,\dots,k-1$.
\end{lemma}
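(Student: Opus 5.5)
The plan is to make $\iota_{n,k}$ completely explicit and read off every claim from that description. First I establish the block lengths \eqref{eq:blocklen}. For fixed $c\in\{1,\dots,k\}$ the block $P_c$ consists of the integers $c+mk\le n$ with $m\ge0$, so $m$ ranges over $0,\dots,\lfloor (n-c)/k\rfloor$. Since $n\ge k\ge c$, we have $n-c\ge0$, so every block is nonempty and $|P_c|=\lfloor (n-c)/k\rfloor+1$. The blocks partition $[1,n]$ by residue modulo $k$, which confirms that $\iota_{n,k}$ is indeed a listing of $1,\dots,n$.

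Next I locate the seams. Write $S_c=|P_1|+\cdots+|P_c|$, with $S_0=0$ and $S_k=n$. By definition $\iota_{n,k}$ lists $P_1$ in increasing order in positions $S_0+1,\dots,S_1$, then $P_2$ in positions $S_1+1,\dots,S_2$, and so on. Take $j\in\{1,\dots,n-1\}$.
\begin{itemize}
\item If $j$ and $j+1$ lie in the same block $P_c$, they are consecutive elements of an arithmetic progression of step $k$. Hence $\iota(j+1)-\iota(j)=k$.
\item Otherwise $j=S_c$ for some $1\le c\le k-1$. Then $\iota(j)=\max P_c\ge c$ and $\iota(j+1)=\min P_{c+1}=c+1$. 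The difference is $c+1-\max P_c\le 1$. Since $k\ge2$ whenever seams exist, this difference is never equal to $k$.
\end{itemize}
Because the blocks are nonempty, the $S_c$ for $c=1,\dots,k-1$ are distinct and all lie in $\{1,\dots,n-1\}$. The exceptional indices are therefore exactly these $k-1$ partial sums. When $k=1$ there is a single block and no seams, consistent with the count $k-1=0$.

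The only point needing care is the nonemptiness of every block. This is exactly where the hypothesis $n\ge k$ enters: it guarantees that the $k-1$ boundaries are distinct indices strictly inside $[1,n-1]$, rather than coinciding or falling at $0$ or $n$. That is also the step I would flag as the main (if mild) obstacle. The rest is bookkeeping, and I would check a small case such as $n=7$, $k=3$ (blocks $1,4,7\mid2,5\mid3,6$, seams at $3$ and $5$) as a sanity check.
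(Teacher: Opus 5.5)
Your proof is correct and follows the paper's argument: the difference is exactly $k$ inside each progression $P_c$, and at each of the $k-1$ block boundaries the difference $c+1-\max P_c$ cannot equal $k$. Your bound $c+1-\max P_c\le 1<k$ is slightly more careful than the paper's claim that this difference is negative, which fails when $P_c=\{c\}$ is a singleton (for example $n=k$, where the difference is $1$). You also make explicit that $n\ge k$ forces every block to be nonempty, a fact the paper uses without comment.
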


\begin{proof}
Within a single $P_c$ consecutive entries differ by exactly $k$, so no interior
index of a block is a seam. A seam can therefore occur only where $P_c$ ends
and $P_{c+1}$ begins, and there are exactly $k-1$ such indices. At such an
index the difference is $(c+1)-\max P_c$, which is negative and hence not equal
to $k$. Thus every block boundary is a seam.
\end{proof}

\begin{corollary}\label{cor:affine}
Fix $k$ and $c$. On each residue class of $n$ modulo $k$, the seam index
$|P_1|+\cdots+|P_c|$ is an affine function of $n$ with slope $c/k$.
Consequently, for fixed $r,s$, all position and value seams are simultaneously
affine in $n$ on each residue class of $n$ modulo $\operatorname{lcm}(r,s)$.
\end{corollary}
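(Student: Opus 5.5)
The first claim follows from the block-length formula \eqref{eq:blocklen} of Lemma~\ref{lem:seam}. Write $n=km+\rho$ with $0\le\rho<k$ and $n$ restricted to the residue class of $\rho$. For $c=1,\dots,k$ I compute $\lfloor (n-c)/k\rfloor$ explicitly. Since $n-c=km+(\rho-c)$ with $-k<\rho-c<k$, the floor equals $m$ when $c\le\rho$ and $m-1$ when $c>\rho$. Hence $|P_c|=m+[c\le\rho]$. Summing over the first $c$ blocks gives
\[
|P_1|+\cdots+|P_c| = cm+\min(c,\rho) = \frac{c}{k}\,(n-\rho)+\min(c,\rho).
\]
For fixed $\rho$ this is affine in $n$ with slope $c/k$ and constant term $\min(c,\rho)-c\rho/k$. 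Lemma~\ref{lem:seam} identifies these partial sums as the seam indices, so the seam index is affine as claimed.

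For the second claim I would apply the first part to each relevant interleaving. Position seams come from $\iota_{n,r}$ and value seams from $\iota_{n,s}$, which are the two interleavings used for positions and values. Let $L=\operatorname{lcm}(r,s)$. A residue class of $n$ modulo $L$ lies inside a single residue class modulo $r$ and inside a single residue class modulo $s$, because $r\mid L$ and $s\mid L$. On such a class, every position seam index is therefore affine in $n$ by the first part with $k=r$, and every value seam index is affine with $k=s$. This holds for all $c$ simultaneously.

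The only point needing care is how the $n\ge k$ hypothesis of Lemma~\ref{lem:seam} interacts with small $n$. Small $n$ give finitely many exceptions within each residue class. I would state the affine description for $n\ge\max(r,s)$, which suffices for the later holonomicity argument since holonomicity is insensitive to finitely many initial terms. The step I expect to be most delicate is getting the floor correct at the boundary cases $c=\rho$ and $\rho=0$; the explicit formula $\min(c,\rho)$ handles both uniformly.
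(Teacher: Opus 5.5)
Your proof is correct and takes the same route as the paper. On a fixed residue class of $n$ modulo $k$ the floor in \eqref{eq:blocklen} is affine, a sum of $c$ blocks has slope $c/k$, and a class modulo $\operatorname{lcm}(r,s)$ lies inside a single class modulo $r$ and a single class modulo $s$; your explicit value $cm+\min(c,\rho)$ just makes the paper's unnamed constants $\rho_c$ concrete. One slip: the range should read $-k\le\rho-c<k$, since equality occurs at $\rho=0$, $c=k$, but the floor there is still $m-1$, so nothing changes, and your restriction to $n\ge\max(r,s)$ is a sensible precaution the paper leaves implicit.
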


\begin{proof}
On a fixed residue class of $n$ mod $k$ the floor in \eqref{eq:blocklen} equals
$(n-c-\rho_c)/k$ for a constant $\rho_c$, so each $|P_c|$ is affine in $n$ with
slope $1/k$, and a partial sum of $c$ of them is affine with slope $c/k$. The
position seams involve $k=r$ and the value seams $k=s$; both are affine on each
class modulo $\operatorname{lcm}(r,s)$.
\end{proof}

Now fix $r,s\ge1$ and $n$. Define the position relabelling $p=\iota_{n,r}$ and
the value relabelling $v=\iota_{n,s}$, and associate to $\pi\in\Sn$ the
permutation $\sigma\in\Sn$ determined by
\begin{equation}\label{eq:transport}
  \sigma_k=v^{-1}\bigl(\pi_{\,p(k)}\bigr),\qquad k=1,\dots,n.
\end{equation}
Since $p$ and $v$ are bijections, \eqref{eq:transport} is a bijection
$\Sn\to\Sn$. Write
\[
  \mathcal P=\{k:p(k+1)-p(k)\ne r\},\quad
  \mathcal V^{+}=\{u:v(u+1)-v(u)\ne s\},\quad
  \mathcal V^{-}=\{u:v(u-1)-v(u)\ne -s\}
\]
for the position seams and the forward and backward value seams; by
Lemma~\ref{lem:seam}, $|\mathcal P|=r-1$ and
$|\mathcal V^{+}|=|\mathcal V^{-}|=s-1$.

\begin{proposition}\label{prop:transport}
Under \eqref{eq:transport}:
\begin{enumerate}
\item[(i)] $\pi$ satisfies $\pi_{i+r}-\pi_i\ne s$ for all $i$ if and only if
$\sigma$ satisfies $\sigma_{k+1}-\sigma_k\ne1$ for every $k$ with
$k\notin\mathcal P$ and $\sigma_k\notin\mathcal V^{+}$;
\item[(ii)] $\pi$ satisfies $|\pi_{i+r}-\pi_i|\ne s$ for all $i$ if and only if
$|\sigma_{k+1}-\sigma_k|\ne1$ for every $k\notin\mathcal P$, where an ascent
$\sigma_{k+1}=\sigma_k+1$ is excused when $\sigma_k\in\mathcal V^{+}$ and a
descent $\sigma_{k+1}=\sigma_k-1$ when $\sigma_k\in\mathcal V^{-}$.
\end{enumerate}
The exceptional data has size $(r-1)+(s-1)$ in case (i) and $(r-1)+2(s-1)$ in
case (ii); in particular it is bounded independently of $n$.
\end{proposition}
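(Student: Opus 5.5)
The plan is to verify the statement by tracking, separately, how \eqref{eq:transport} acts on positions and on values. Once both directions are set up, (i) and (ii) reduce to a dictionary between the constrained pairs of $\pi$ and adjacent pairs of $\sigma$.

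\emph{Position side.} I will show that $k\mapsto i=p(k)$ is a bijection from $\{1,\dots,n-1\}\setminus\mathcal P$ onto $\{1,\dots,n-r\}$, with $p(k+1)=i+r$. If $k\notin\mathcal P$ then by definition $p(k+1)=p(k)+r\le n$, so $i=p(k)\le n-r$. Conversely, given $i\le n-r$, the elements $i$ and $i+r$ lie in the same block $P_c$ and are consecutive in it. Since $\iota_{n,r}$ lists each block contiguously in increasing order, $i+r$ immediately follows $i$. Hence $i=p(k)$ for a unique $k$, and that $k$ is not a seam. A count check confirms this: $(n-1)-(r-1)=n-r$ by Lemma~\ref{lem:seam}. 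Thus the family of constraints indexed by $i$ is exactly the family of adjacent pairs $(k,k+1)$ with $k\notin\mathcal P$, and on such a pair $\pi_i=v(\sigma_k)$ and $\pi_{i+r}=v(\sigma_{k+1})$.

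\emph{Value side.} Fix $u=\sigma_k$. The forbidden event $\pi_{i+r}=\pi_i+s$ reads $v(\sigma_{k+1})=v(u)+s$, so the task is to identify $v^{-1}(v(u)+s)$ whenever it exists. If $u\notin\mathcal V^{+}$ (with $u<n$), then $v(u+1)=v(u)+s$, so the event is exactly $\sigma_{k+1}=\sigma_k+1$. If $u\in\mathcal V^{+}$ or $u=n$, then $v(u)$ is the last element of its block $P_c$, by the description of seams in Lemma~\ref{lem:seam}. Hence $v(u)+s>n$ and the event is impossible. This is precisely the statement that an ascent at such $u$ is excused, which proves (i). For (ii) I add the symmetric descent case. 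If $u\notin\mathcal V^{-}$ then $v(u-1)=v(u)-s$, so $\pi_{i+r}=\pi_i-s$ iff $\sigma_{k+1}=\sigma_k-1$. Otherwise $v(u)$ is the first element of its block, so $v(u)-s<1$ and the event is impossible. Combining the two cases gives the stated form of (ii).

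\emph{Bookkeeping.} The exceptional data consists of the sets $\mathcal P$ and $\mathcal V^{+}$ in case (i), together with $\mathcal V^{-}$ in case (ii). The size bounds then follow from $|\mathcal P|=r-1$ and $|\mathcal V^{\pm}|=s-1$ (Lemma~\ref{lem:seam}), which are independent of $n$.

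The only delicate step is the value side. One must check that ``$u$ is a forward seam'' coincides exactly with ``$v(u)$ is the maximum of its progression'', including the boundary conventions at $u=n$ and $u=1$. Here I would rely directly on the proof of Lemma~\ref{lem:seam}, which shows that seams are precisely the block boundaries. The backward seams $\mathcal V^{-}$ are the indices $u$ with $u-1\in\mathcal V^{+}$, which are exactly the block starts.
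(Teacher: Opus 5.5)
Your proposal is correct and follows the paper's argument: you transport the pair $(p(k),p(k+1))$ with $k\notin\mathcal P$ through $v$, using $v(u+1)=v(u)+s$ off $\mathcal V^{+}$ and $v(u-1)=v(u)-s$ off $\mathcal V^{-}$. Your write-up is more complete than the paper's, which compresses two points into the phrase ``imposed on neither side'': that every $i\le n-r$ arises from a non-seam $k$, and that the $\pi$-side event is impossible at an excused value because $v(u)$ is the top (resp.\ bottom) of its progression.
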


\begin{proof}
Consider adjacent new indices $k,k+1$ with $k\notin\mathcal P$. Then
$p(k+1)=p(k)+r$, so the pair corresponds to old positions $i=p(k)$ and $i+r$.
Suppose further $\sigma_k=u\notin\mathcal V^{+}$, so $v(u+1)=v(u)+s$. Then
$\sigma_{k+1}=u+1$ if and only if $\pi_{i+r}=v(u+1)=v(u)+s=\pi_i+s$. Outside
the excused set the two conditions agree pointwise; on it the adjacency
condition is imposed on neither side. This proves (i).

For (ii) the same computation applies to descents with $\mathcal V^{-}$ in
place of $\mathcal V^{+}$: with $k\notin\mathcal P$ and
$u\notin\mathcal V^{-}$ we have $v(u-1)=v(u)-s$, so $\sigma_{k+1}=u-1$ if and
only if $\pi_{i+r}-\pi_i=-s$. Since $|\pi_{i+r}-\pi_i|=s$ means
$\pi_{i+r}-\pi_i=\pm s$, both signs must be excused. The cardinalities follow
from Lemma~\ref{lem:seam}.
\end{proof}

For $(r,s)=(2,2)$ the excused set has size $1+1=2$ and
Proposition~\ref{prop:transport}(i) recovers the bijection of Matsuo as used in
\cite{SZ}, where the condition is stated as ``$\pi_{i+1}-\pi_i\ne1$ except when
$i=\lfloor(n+1)/2\rfloor$ or $\pi_i=\lfloor(n+1)/2\rfloor$''.

\subsection{The two-layer inclusion--exclusion representation}

Proposition~\ref{prop:transport} reduces both families to a single counting
problem. Fix $n$, a set $\mathcal P\subseteq\{1,\dots,n-1\}$ of excused
positions and a set $\mathcal V\subseteq\{1,\dots,n\}$ of excused values, and
let
\[
  R(n;\mathcal P,\mathcal V)
  =\#\bigl\{\sigma\in\Sn:\ \sigma_{k+1}-\sigma_k\ne1
    \text{ whenever }k\notin\mathcal P\text{ and }\sigma_k\notin\mathcal V\bigr\},
\]
with the analogous two-sign version $R^{\pm}$.

\subsubsection*{Bonds and blocks}

Call a pair $(k,v)$ a \emph{violation} of $\sigma$ if $\sigma_k=v$,
$\sigma_{k+1}=v+1$, $k\notin\mathcal P$ and $v\notin\mathcal V$. Inclusion and
exclusion over sets of violations gives
\begin{equation}\label{eq:ie1}
  R(n;\mathcal P,\mathcal V)=\sum_{S}(-1)^{|S|}A(S),
\end{equation}
where $S$ ranges over sets of \emph{value bonds}
$S\subseteq\{1,\dots,n-1\}\setminus\mathcal V$ and $A(S)$ counts the
permutations in which every $v\in S$ occurs as a violation.

Forcing the bonds in $S$ glues the values into maximal runs: if $S$ decomposes
$\{1,\dots,n\}$ into $n-|S|$ maximal runs of consecutive integers, a
permutation realising all bonds of $S$ is exactly an arrangement of these
$n-|S|$ \emph{blocks} in some order. Without the position constraint one would
obtain the classical $\sum_S(-1)^{|S|}(n-|S|)!$; the position constraint is
what makes the problem two-layered.

\subsubsection*{The position layer}

In an arrangement of blocks the internal bonds occupy the positions determined
by the partial sums of the block sizes: if the blocks have sizes
$\ell_1,\dots,\ell_{n-|S|}$ in the order used, the internal bonds of the $j$-th
block occupy positions $L_{j-1}+1,\dots,L_j-1$ with
$L_j=\ell_1+\cdots+\ell_j$. Thus $A(S)$ counts arrangements in which no
internal bond position lies in $\mathcal P$. Since $|\mathcal P|=r-1$ is
bounded, this constraint is removed by a second, finite inclusion--exclusion:
\begin{equation}\label{eq:ie2}
  A(S)=\sum_{T\subseteq\mathcal P}(-1)^{|T|}B(S,T),
\end{equation}
where $B(S,T)$ counts arrangements in which every position of $T$ \emph{is} an
internal bond position and the positions of $\mathcal P\setminus T$ are
unconstrained. The sum has $2^{|\mathcal P|}=2^{\,r-1}$ terms, a number
depending only on $r$.

\subsubsection*{Block factors}

A block of size $\ell$ absorbs $\ell-1$ bonds and occupies $\ell$ consecutive
positions, so with $x$ marking positions the unmarked block factor is
\begin{equation}\label{eq:blockfactor}
  \mathcal B(x)=\sum_{\ell\ge1}(-1)^{\ell-1}x^{\ell}=\frac{x}{1+x}.
\end{equation}
In the two-sign case a block of size $\ell\ge2$ may be traversed in either
direction, and a block of size $1$ in only one way, giving
\begin{equation}\label{eq:blockfactorpm}
  \mathcal B^{\pm}(x)=x+2\sum_{\ell\ge2}(-1)^{\ell-1}x^{\ell}
  =\frac{x-x^{2}}{1+x},
\end{equation}
which is the block generating function of the Hertzsprung problem --- a useful
consistency check on the derivation.

\begin{remark}
As a check on \eqref{eq:blockfactor} and on the umbral normalisation below,
take $\mathcal P=\mathcal V=\emptyset$ and $n=3$. Then
$[x^{3}]\mathcal B^{j}=1,-2,1$ for $j=1,2,3$, and
$1\cdot1!-2\cdot2!+1\cdot3!=3$, the number of permutations of $\{1,2,3\}$ with
no succession, namely $132$, $213$, $321$.
\end{remark}

\subsubsection*{Where the two kinds of mark act}

The two excusal types enter in genuinely different ways.

\emph{Excused values force block boundaries.} If $v\in\mathcal V$ then the bond
$v$ is never a violation, so $v\notin S$ for every $S$ in \eqref{eq:ie1}. Hence
no block may span the gap between $v$ and $v+1$: the excused values cut
$\{1,\dots,n\}$ into $|\mathcal V|+1$ \emph{value segments}, and the generating
function factorises as a product of one block sequence per segment. A catalytic
variable $y_i$ marking the $i$-th cut records its location.

\emph{Excused positions split the arrangement.} The position of a bond is
determined by the \emph{arrangement} of the blocks, not by their value order,
so a position mark cannot be carried inside a block factor. Instead, fixing
which arrangement positions $t_1<\cdots<t_q$ are internal bond positions cuts
the arrangement into $q+1$ consecutive \emph{arrangement segments}. Blocks may
be distributed among the segments freely and ordered arbitrarily within each,
so if the segments receive $j_0,\dots,j_q$ blocks the number of orderings is
$j_0!\,j_1!\cdots j_q!$. This is realised by giving each segment its own
block-counting variable and applying the multivariate umbral functional
\begin{equation}\label{eq:umbral}
  \bigl\langle\,\cdot\,\bigr\rangle:\quad
  z_0^{\,j_0}\cdots z_q^{\,j_q}\longmapsto j_0!\cdots j_q!,
  \qquad
  \langle f\rangle=\int_0^\infty\!\!\cdots\!\int_0^\infty
  f\,e^{-z_0-\cdots-z_q}\,dz_0\cdots dz_q.
\end{equation}

\begin{lemma}\label{lem:marked}
Let $\mathcal B^{\ast}(x;u)=\sum_{\ell\ge2}(-1)^{\ell-1}x^{\ell}
(u+u^{2}+\cdots+u^{\ell-1})$ be the generating function of a block carrying one
distinguished interior slot, $u$ recording the slot's offset from the start of
the block. Then
\[
  \mathcal B^{\ast}(x;u)=\frac{-x^{2}u}{(1+x)(1+xu)}.
\]
\end{lemma}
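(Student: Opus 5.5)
The plan is to compute $\mathcal B^{\ast}(x;u)$ directly as a formal power series in $x$ with polynomial coefficients in $u$. To avoid any division by $1-u$, I would interchange the order of summation: sum first over the offset $m$ of the distinguished slot, then over the block size $\ell$. A block of size $\ell$ has interior slots at offsets $m=1,\dots,\ell-1$. Equivalently, a fixed offset $m\ge1$ is available in exactly the blocks of size $\ell\ge m+1$. Hence
\[
  \mathcal B^{\ast}(x;u)=\sum_{m\ge1}u^{m}\sum_{\ell\ge m+1}(-1)^{\ell-1}x^{\ell}.
\]
This rearrangement is legitimate in $\mathbb{Z}[u][[x]]$, since each coefficient of $x^{\ell}$ receives only finitely many contributions.

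The inner sum is a shifted geometric series, evaluated exactly as in \eqref{eq:blockfactor}. Substituting $\ell=m+1+i$ with $i\ge0$ gives
\[
  \sum_{\ell\ge m+1}(-1)^{\ell-1}x^{\ell}=(-1)^{m}x^{m+1}\sum_{i\ge0}(-x)^{i}=\frac{(-1)^{m}x^{m+1}}{1+x}.
\]
Pulling out the factor $x/(1+x)$ leaves $\sum_{m\ge1}(-xu)^{m}$. This is a second geometric series, equal to $-xu/(1+xu)$. Multiplying the two factors gives $-x^{2}u/\bigl((1+x)(1+xu)\bigr)$, which is the claimed formula.

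There is no genuine obstacle; the only points needing care are bookkeeping. First, the offsets must run over $1,\dots,\ell-1$, matching the factor $u+\cdots+u^{\ell-1}$ in the statement. Second, the sign $(-1)^{\ell-1}$ must be tracked correctly through the shift $\ell=m+1+i$.

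As a cross-check, in the spirit of Section~\ref{sec:method}, I would use two independent tests. The first is an alternative derivation: sum the finite geometric factor as $u(1-u^{\ell-1})/(1-u)$ and reuse $\mathcal B(x)=x/(1+x)$ from \eqref{eq:blockfactor}. This gives
\[
  \frac{u}{1-u}\Bigl(-\frac{x^{2}}{1+x}+\frac{x^{2}u}{1+xu}\Bigr),
\]
and the bracket simplifies to $x^{2}(u-1)/\bigl((1+x)(1+xu)\bigr)$, reproducing the same formula. The second test is a specialisation: at $u=1$ the series must become $\sum_{\ell\ge2}(-1)^{\ell-1}(\ell-1)x^{\ell}$, which the closed form $-x^{2}/(1+x)^{2}$ does indeed expand to.
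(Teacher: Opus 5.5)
Your proof is correct, but your main argument takes a different route from the paper's; your first cross-check is in fact the paper's proof. The paper sums the finite factor $u+\cdots+u^{\ell-1}=u(u^{\ell-1}-1)/(u-1)$ and splits $\mathcal B^{\ast}$ into two series of the form $\sum_{\ell\ge2}(-1)^{\ell-1}y^{\ell}=-y^{2}/(1+y)$, taken at $y=xu$ and $y=x$. It then recombines them using $(1+xu)-u(1+x)=1-u$.

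You instead interchange the sums over the block size $\ell$ and the offset $m$, which buys two things. First, every step stays inside $\mathbb{Z}[u][[x]]$. You never divide by $u-1$, so you never need the tacit passage to $\mathbb{Q}(u)[[x]]$ followed by cancellation that the paper's computation relies on. Second, the answer appears directly as a product, $\frac{x}{1+x}\cdot\frac{-xu}{1+xu}=-\mathcal B(xu)\,\mathcal B(x)$, with $\mathcal B$ as in \eqref{eq:blockfactor}. This has a transparent meaning: the marked slot cuts a block into a left piece of length $m\ge1$, weighted by $u^{m}$, and a right piece of length $\ell-m\ge1$. The sign splits as $(-1)^{\ell-1}=-(-1)^{m-1}(-1)^{\ell-m-1}$. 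That factorised form is the more informative one in context, since the paper's position marks are meant to cut arrangements into independent segments.

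The paper's route is a purely mechanical rational-function simplification; it reaches the same identity in a line or two but hides this structure. Your $u=1$ specialisation, giving $-x^{2}/(1+x)^{2}$, is a sound additional check.
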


\begin{proof}
Summing the inner geometric series,
$u+\cdots+u^{\ell-1}=u(u^{\ell-1}-1)/(u-1)$, so
\[
  \mathcal B^{\ast}
  =\frac{u}{u-1}\Bigl(\tfrac1u\sum_{\ell\ge2}(-1)^{\ell-1}(xu)^{\ell}
   -\sum_{\ell\ge2}(-1)^{\ell-1}x^{\ell}\Bigr).
\]
For any $y$, $\sum_{\ell\ge1}(-1)^{\ell-1}y^{\ell}=y/(1+y)$, whence
$\sum_{\ell\ge2}(-1)^{\ell-1}y^{\ell}=y/(1+y)-y=-y^{2}/(1+y)$. Substituting
$y=xu$ and $y=x$,
\[
  \mathcal B^{\ast}
  =\frac{ux^{2}}{u-1}\cdot\frac{(1+xu)-u(1+x)}{(1+x)(1+xu)}
  =\frac{ux^{2}}{u-1}\cdot\frac{1-u}{(1+x)(1+xu)},
\]
since $(1+xu)-u(1+x)=1-u$; and $(1-u)/(u-1)=-1$.
\end{proof}

Assembling \eqref{eq:ie1}, \eqref{eq:ie2}, \eqref{eq:blockfactor} and
\eqref{eq:umbral}:

\begin{proposition}\label{prop:rep}
For every $n$, $\mathcal P$ and $\mathcal V$,
\begin{equation}\label{eq:rep}
  R(n;\mathcal P,\mathcal V)
  =\sum_{T\subseteq\mathcal P}(-1)^{|T|}\,
  \bigl[x^{n}\,y_1^{v_1}\cdots y_p^{v_p}\bigr]\,
  \bigl\langle\Phi_T\bigl(x,z_0,\dots,z_{|T|},y_1,\dots,y_p\bigr)\bigr\rangle,
\end{equation}
where $p=|\mathcal V|$, the functional $\langle\cdot\rangle$ is
\eqref{eq:umbral}, and each $\Phi_T$ is a rational function built from
\eqref{eq:blockfactor} --- respectively \eqref{eq:blockfactorpm} in the two-sign
case --- as a product of $p+1$ value-segment factors distributed over $|T|+1$
arrangement segments.
\end{proposition}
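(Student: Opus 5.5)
The plan is to assemble \eqref{eq:ie1} and \eqref{eq:ie2}, and to show that for fixed $S$ and $T$ the count $B(S,T)$ is the coefficient extraction in \eqref{eq:rep}. Substituting \eqref{eq:ie2} into \eqref{eq:ie1} and exchanging the finite sums gives
\[
R(n;\mathcal P,\mathcal V)=\sum_{T\subseteq\mathcal P}(-1)^{|T|}\sum_{S}(-1)^{|S|}B(S,T).
\]
It then suffices to show, for each fixed $T$, that $\sum_S(-1)^{|S|}B(S,T)$ equals $[x^n y^{v}]\langle\Phi_T\rangle$.

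The first step is to parametrise the pairs $(S,\text{arrangement})$ counted by $B(S,T)$. The set $S$ avoids $\mathcal V$, so it is a disjoint union of bond sets inside the $p+1$ value segments cut out by $\mathcal V$. A segment of length $m$ decomposed into blocks of sizes $\ell_1,\dots,\ell_j$ contributes sign $\prod(-1)^{\ell_i-1}$, which equals $(-1)^{\#\text{bonds}}$. It is therefore encoded by $[x^m]\mathcal B(x)^j$, or by $\mathcal B^{\pm}$ in the two-sign case, where each block of size at least $2$ carries the factor $2$ for orientation. The catalytic $y_i$ fixes the segment lengths, so that the cut points land at $v_1,\dots,v_p$. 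I will realise this with a factor $y_i$ per cut, i.e.\ a geometric/sequence factor that advances a segment boundary, and then extract the coefficient of $y_1^{v_1}\cdots y_p^{v_p}$. I will check this normalisation against the Remark for $p=0$.

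The second step handles the arrangement for fixed $T=\{t_1<\dots<t_q\}$. Requiring that each $t_i$ be an internal bond position means that some block straddles positions $t_i,t_i+1$. I will cut the arrangement at each such straddling block. The straddling block is then a block with a distinguished interior slot, whose generating function is $\mathcal B^{\ast}(x;u)$ by Lemma~\ref{lem:marked}. Here $u$ records the offset, and matching offsets with the prescribed positions $t_i$ is again a coefficient extraction in $x$. The remaining ordinary blocks fall into $q+1$ arrangement segments. Within the $i$-th segment they may be ordered arbitrarily, giving $j_i!$ orderings, which is exactly what the functional \eqref{eq:umbral} produces on $z_i^{j_i}$. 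The segment lengths are forced by $t_1,\dots,t_q$ and $n$, so again these are coefficient constraints. Positions in $\mathcal P\setminus T$ impose nothing. Multiplying the per-segment factors, each of which substitutes $z_i\mathcal B$ or the distinguished block, and summing over the assignment of value-segment blocks to arrangement segments yields a rational $\Phi_T$. It is a product of $p+1$ value-segment factors, each of the form $\sum_j(\sum_i z_i\mathcal B)^j$ restricted appropriately.

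The main obstacle is the bookkeeping in the second step. A single block's value segment and its arrangement segment are independent, so I must verify that the cross product is genuinely a product of value-segment factors in which each block chooses its arrangement segment through a linear combination $z_0+\dots+z_q$. I must also check that the position constraints are captured by extracting the coefficient of $x^n$ together with the fixed offsets, with no overcounting when one block covers two marked positions. I would first try to resolve this by making every block that covers at least one position of $T$ a single distinguished segment-cutter. If that fails, I would fall back to partitioning $T$ into the maximal groups covered by a common block, which is a finite sum over $2^{r-1}$ patterns and is harmless for the statement. Finally, I would validate the identity numerically on small $n$ against brute force, including the negative control with $\mathcal P$ ignored.
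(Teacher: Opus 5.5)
You are following the paper's route: \eqref{eq:ie1} and \eqref{eq:ie2} combined, value segments cut by $\mathcal V$ and recorded by the $y_i$, arrangement segments weighted by \eqref{eq:umbral}, and Lemma~\ref{lem:marked} for the block straddling each $t_i$. Your worry about one block covering two marked positions is real, and the paper's sketch passes over it. Grouping $T$ into consecutive runs covered by a common block, with a multi-slot version of $\mathcal B^{\ast}$, repairs it.

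The step that fails is the one you defer to the end. The position constraints are \emph{not} captured by extracting $[x^n]$ with the offsets fixed. Suppose each ordinary block has weight $z_a\mathcal B(x)$, times value-catalytic factors, and chooses its arrangement segment $a$ only through $z_0+\cdots+z_q$. Then the exponent of $x$ records only the total length. Requiring the $i$-th distinguished block $D_i$ to cover $t_i,t_i+1$ means $P_i+o_i=t_i$. Here $o_i\in\{1,\dots,\ell(D_i)-1\}$ is the offset marked by $u$, and $P_i$ is the total length of everything placed before $D_i$. $P_i$ varies over configurations and appears nowhere in your $\Phi_T$, so your count places the distinguished blocks anywhere. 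Nor are the segment lengths forced by $t_1,\dots,t_q$ and $n$: already $L_0=t_1-o_1$ depends on $o_1$. Since the blocks of one value-segment factor are spread over all arrangement segments, a single length variable cannot separate these partial sums.

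The missing idea is a second family of catalytic variables $\xi_1,\dots,\xi_q$, one per marked position, mirroring the $y_i$. The construction is as follows.
\begin{enumerate}
\item A block of size $\ell$ in value segment $s$ and arrangement segment $a$ gets weight $(-1)^{\ell-1}z_a\,(x\,y_{s+1}\cdots y_p\,\xi_{a+1}\cdots\xi_q)^{\ell}$.
\item $D_i$, lying in value segment $s$, gets weight $\mathcal B^{\ast}(x\,y_{s+1}\cdots y_p\,\xi_{i+1}\cdots\xi_q;\,\xi_i)$, doubled in the two-sign case.
\item The $D_i$ enter as alternatives inside the value-order sequences $1/(1-\cdots)$ of the value-segment factors, each tagged by a marker $d_i$; the coefficient of $d_1\cdots d_q$ is extracted.
\item Finally extract $[\xi_1^{t_1}\cdots\xi_q^{t_q}]$ alongside $[x^ny_1^{v_1}\cdots y_p^{v_p}]$.
\end{enumerate}
The exponent of $\xi_i$ is then exactly $P_i+o_i$, and that of $y_i$ is the number of values $\le v_i$. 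The decomposition into segments, distinguished blocks and within-segment orders is bijective, so the rest of your argument goes through.

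This $\Phi_T$ has $|T|$ more variables than the signature displayed in \eqref{eq:rep}. The paper's assembly has the same omission, so following it more closely will not close the gap. The representation should be stated with the extra variables. There are at most $r-1$ of them, so Corollary~\ref{cor:bounded} and Theorem~\ref{thm:hol} are unaffected in substance.

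If you only need a representation of this type, you can drop the $T$-layer entirely. ``No internal bond position in $\mathcal P$'' means each $k\in\mathcal P$ is a block boundary. This splits the arrangement into $|\mathcal P|+1$ pieces of prescribed lengths, with free ordering in each: exactly the $j_0!\cdots j_q!$ picture, with no $\mathcal B^{\ast}$ and no multiple coverage. The prescribed lengths still need position-catalytic variables.
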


\begin{corollary}\label{cor:bounded}
For fixed $r,s$ the representation \eqref{eq:rep} is a sum of at most
$2^{\,r-1}$ terms, each a rational function in at most $1+r$ umbral variables
together with at most $s-1$ catalytic variables in the $a_{r,s}$ case and
$2(s-1)$ in the $b_{r,s}$ case. All these counts are independent of $n$.
\end{corollary}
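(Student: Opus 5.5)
The count is read off from the shape of \eqref{eq:rep} in Proposition~\ref{prop:rep}, using Proposition~\ref{prop:transport} and Lemma~\ref{lem:seam} for the sizes. The plan has three counting steps and a uniformity check.

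\emph{Number of terms.} I specialise Proposition~\ref{prop:rep} to the excused data $\mathcal P$, $\mathcal V^{+}$ (and $\mathcal V^{-}$) supplied by Proposition~\ref{prop:transport}. The outer sum runs over $T\subseteq\mathcal P$. By Lemma~\ref{lem:seam} we have $|\mathcal P|=r-1$ for every $n\ge r$, so there are exactly $2^{r-1}$ terms. For $n<r$ the set $\mathcal P$ is smaller, which only lowers the count, hence the phrase ``at most''.

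\emph{Umbral variables.} Inside the term indexed by $T$, fixing $T=\{t_1<\dots<t_q\}$ cuts the arrangement into $q+1=|T|+1$ segments. Each segment receives its own block-counting variable $z_0,\dots,z_q$ in \eqref{eq:umbral}. Since $|T|\le r-1$, there are at most $r$ umbral variables. Adding the position variable $x$ gives at most $1+r$, which is the count in the statement if $x$ is included among them.

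\emph{Catalytic variables.} In case (i) the excused values form $\mathcal V^{+}$, with $|\mathcal V^{+}|=s-1$. These cut $\{1,\dots,n\}$ into $s$ value segments, and each cut carries one catalytic $y_i$, giving $s-1$ variables. In case (ii) the excusal of Proposition~\ref{prop:transport}(ii) acts on ascents at $\mathcal V^{+}$ and on descents at $\mathcal V^{-}$ separately. So I would run the construction with $\mathcal V=\mathcal V^{+}\sqcup\mathcal V^{-}$ carried as distinct marks, one catalytic variable per mark, giving $2(s-1)$. Rationality of each $\Phi_T$ then follows because it is a finite product of the rational block factors \eqref{eq:blockfactor} and \eqref{eq:blockfactorpm}, possibly with a marked block from Lemma~\ref{lem:marked} where a position in $T$ falls inside a block. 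Each such factor is rational in $x,u$, with $u$ specialised to a monomial in the segment variables.

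\emph{Main obstacle.} The only nontrivial point is that these counts are independent of $n$, in the sense of being uniform in $n$ rather than bounded for each fixed $n$. Here I would invoke Corollary~\ref{cor:affine}. On each residue class of $n$ modulo $\operatorname{lcm}(r,s)$, the seam locations are affine in $n$, while their number is constant. So the same finite family of $\Phi_T$, with the same variable sets, serves every $n$ in the class. Only the extracted exponents $x^n y_1^{v_1}\cdots$ move, and they move affinely in $n$. I expect the care to go into the two-sign case: I must check that an ascent mark and a descent mark at the same value do not interact to force an extra variable. I would handle this by observing that a two-sign block of size at least $2$ has a definite direction in each term of \eqref{eq:blockfactorpm}, so at most one of the two marks at a given cut is active.
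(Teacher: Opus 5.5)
Your proposal is correct and is essentially the paper's argument. The paper gives no separate proof; it reads the counts off \eqref{eq:rep}: $2^{|\mathcal P|}$ terms, the variables $x,z_0,\dots,z_{|T|}$ with $|T|\le r-1$, and one $y_i$ per excused value, with $|\mathcal P|=r-1$ and $|\mathcal V^{\pm}|=s-1$ supplied by Lemma~\ref{lem:seam}.

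The one caution concerns your final paragraph. Independence of $n$ is already immediate from those cardinalities, so Corollary~\ref{cor:affine} is not needed here; it enters only in step (3) of the proof of Theorem~\ref{thm:hol}. Your stronger claim, that one fixed family $\Phi_T$ serves a whole residue class with only the extracted exponents moving, is not supported by \eqref{eq:rep}, since that extraction records no position seams. Likewise the ascent/descent interaction you worry about is irrelevant for an upper bound. In fact $\mathcal V^{-}=\mathcal V^{+}+1$, so both kinds of mark cut the same $s-1$ gaps.
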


\subsection{Proof of Theorem~\ref{thm:hol}}

Fix $r,s$. By Corollary~\ref{cor:bounded}, $a_{r,s}(n)$ --- respectively
$b_{r,s}(n)$ --- is obtained from a rational function in a fixed finite set of
variables by the following finite sequence of operations, each preserving
holonomicity.

\emph{(1) Coefficient extraction.} Extracting the coefficient of
$x^{n}y_1^{v_1}\cdots y_p^{v_p}$ from a rational function in finitely many
variables, and more generally taking a diagonal, yields a D-finite series
(Lipshitz \cite{LipshitzDiag}; Christol). By Corollary~\ref{cor:bounded} the
number of extractions is at most $1+p$, independent of $n$.

\emph{(2) The umbral functional.} Applying $z^{j}\mapsto j!$ in a variable is
integration against $e^{-z}$, and the resulting parameterised integral of a
D-finite integrand is D-finite in the remaining parameters
(Almkvist--Zeilberger \cite{AZ}; Lipshitz \cite{Lipshitz} for the closure
statement). By Corollary~\ref{cor:bounded} the functional \eqref{eq:umbral}
involves at most $1+r$ variables.

\emph{(3) Specialisation of the excused data.} By Corollary~\ref{cor:affine}
the seam indices are affine in $n$ on each residue class of $n$ modulo
$L:=\operatorname{lcm}(r,s)$, with slopes $c/r$ and $c/s$. These slopes are not
integers, so we reparametrise: fix $\rho\in\{0,\dots,L-1\}$ and write
$n=LM+\rho$. Since $r\mid L$ and $s\mid L$, each seam index becomes an affine
function of $M$ with integer slope and integer constant term. Substituting
integer-affine functions of $M$ for the catalytic exponents of a holonomic
multivariate sequence is a diagonal-type specialisation, and diagonals of
D-finite series are D-finite \cite{LipshitzDiag}; see also \cite[\S2]{Zeilberger}
for the holonomic-systems formulation. Hence each of the $L$ sections
$M\mapsto a_{r,s}(LM+\rho)$ is holonomic.

\emph{(4) Interlacing.} Finally $a_{r,s}$ is the interlacing of the $L$
sections produced in (3), and an interlacing of finitely many holonomic
sequences is holonomic \cite[Ch.~7]{KauersPaule}.

Each step preserves holonomicity and the number of steps depends only on
$(r,s)$. \qed

\subsection{Verification}

Every component was checked against brute-force enumeration.

The bijection was verified for
$(r,s)\in\{(2,2),(2,3),(3,2),(3,3),(2,4),(4,2),(3,4)\}$ and $4\le n\le 8$, all
$32$ rows agreeing exactly, with excusal sizes matching
Proposition~\ref{prop:transport} in every case. The two-sign case was tested
against three candidate excusal rules; only the symmetrised rule of
Proposition~\ref{prop:transport}(ii) reproduces $b_{r,s}$, the one-sided rule
undercounting and a more permissive rule overcounting --- the computational
signature of both signs requiring excusal. Corollary~\ref{cor:affine} was
confirmed for nine $(r,s)$ pairs by fitting each seam index from two points and
verifying on the remainder, the fitted slopes agreeing with $c/r$ and $c/s$.

The representation itself was verified in nine configurations of
$(n,\mathcal P,\mathcal V)$ against direct enumeration, computed both as the
full assembly \eqref{eq:ie1}--\eqref{eq:ie2} and via the inner
inclusion--exclusion over $T$, with a deliberate control omitting the position
layer that disagrees exactly when $\mathcal P\ne\emptyset$.
Lemma~\ref{lem:marked} was verified symbolically as a truncated bivariate
series. Code and data are at \cite{repoCh3}.

\begin{remark}
Brute force is $O(n!)$, so the checks run to $n=8$: they confirm a pattern, not
a theorem. The theorem is proved above; the computations exist to catch an
error in the statement, not to substitute for the proof.
\end{remark}

\begin{remark}
This challenge was recorded in our working notes for the entire campaign as
research-grade, on the ground that no general theory was known. The computation
that settled it required seconds to run. We regard the delay as a failure of
the protocol of Section~\ref{sec:method}, not of the mathematics.
\end{remark}

\section{The $a_{2,2}$ recurrence: a partial result}\label{sec:a22}

The first challenge of \cite{SZ} asks for a proof of the Kauers--Koutschan
order-$8$, degree-$11$ operator for $a_{2,2}=\texttt{A189281}$. We did not
obtain it. We record the reduction, which is complete and machine-verified end
to end, and the obstruction, which we believe is a statement about our method
rather than about the problem.

\subsection{The reduction}

Matsuo's bijection (the case $(r,s)=(2,2)$ of
Proposition~\ref{prop:transport}) gives
\[
  a_{2,2}(n)=\mathrm{RIN}\bigl(n,\lfloor(n+1)/2\rfloor,\lfloor(n+1)/2\rfloor\bigr),
\]
where $\mathrm{RIN}(n,a,b)$ counts permutations with
$\pi_{i+1}-\pi_i\ne1$ except when $i=a$ or $\pi_i=b$. The two-layer
inclusion--exclusion of Section~\ref{sec:ch3} gives
$\mathrm{RIN}(n,a,b)=W_a(n)-X(n,a,b)$ with
\[
  W_a(n)=\langle m!\rangle\bigl[G_a(z)G_{n-a}(z)\bigr],
  \qquad G_L=[x^L]\frac{1}{1-zx/(1+x)} .
\]
Since $G_L=(-1)^{L-1}z(1-z)^{L-1}$, the $W$-part has the closed forms
\[
  e_W(m)=\langle m!\rangle\bigl[z^{2}(z-1)^{2m-2}\bigr],\qquad
  o_W(m)=\langle m!\rangle\bigl[z^{2}(z-1)^{2m-3}\bigr]\ (m\ge2),
\]
with values $2,14,362,18806,\dots$ and $4,64,2428,165016,\dots$ respectively.

\begin{theorem}\label{thm:W}
Both $W$-parts satisfy order-$2$ recurrences with polynomial coefficients:
\[
  -m(2m-1)(2m+3)\,e_W(m)+\tfrac12(8m^{3}+32m^{2}+32m+7)\,e_W(m+1)
  -\tfrac12(2m+1)\,e_W(m+2)=0,
\]
\[
  -2(m-1)(m+1)(2m-1)\,o_W(m)+(4m^{3}+10m^{2}+3m-1)\,o_W(m+1)
  -m\,o_W(m+2)=0 .
\]
\end{theorem}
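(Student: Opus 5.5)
The plan is to treat both $W$-parts as parameterised integrals and apply creative telescoping in the continuous (Almkvist--Zeilberger) form, which is how the umbral functional \eqref{eq:umbral} was already used in the proof of Theorem~\ref{thm:hol}. Concretely, by \eqref{eq:umbral},
\[
  e_W(m)=\int_0^\infty z^{2}(z-1)^{2m-2}e^{-z}\,dz,\qquad
  o_W(m)=\int_0^\infty z^{2}(z-1)^{2m-3}e^{-z}\,dz .
\]
Write $F_e(m,z)$ for the first integrand. The goal for the even case is an identity
\[
  c_0(m)F_e(m,z)+c_1(m)F_e(m+1,z)+c_2(m)F_e(m+2,z)=\frac{\partial}{\partial z}\bigl(R(m,z)F_e(m,z)\bigr),
\]
where the $c_i$ are the stated coefficients and $R$ is rational in $z$, with denominator at most a power of $(z-1)$. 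I expect $R$ to be $(z-1)$ times a polynomial in $z$ of degree about $3$ whose coefficients are polynomial in $m$.

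To find $R$, divide the identity by $F_e(m,z)$. The left side becomes $c_0+c_1(z-1)^2+c_2(z-1)^4$, and the right side becomes $R'+R\bigl(2/z+(2m-2)/(z-1)-1\bigr)$. Clearing denominators, solving for the undetermined coefficients of $R$ is a finite linear system. With the $c_i$ given, it is overdetermined, so it doubles as a check of the claimed operator. Once $R$ is in hand, the identity is a rational-function identity in $z$ and $m$ and can be verified symbolically. The odd case is handled identically, with $(z-1)^{2m-3}$ in the integrand and the second operator.

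Integrating over $(0,\infty)$ gives the recurrence, provided the boundary term $R(m,z)F_e(m,z)$ vanishes at both ends. At $\infty$ it does, because of $e^{-z}$. At $z=0$ this is the step I expect to be the real obstacle, in light of the sign-error episode of Section~\ref{sec:method}. The factor $z^{2}$ in $F_e$ kills the boundary term only if $R$ has no pole at $z=0$. If the solved certificate does have such a pole, I would redo the ansatz forcing $R=z\,\tilde R$, so that the boundary term has a factor $z^{3}$, or else carry the explicit nonzero value at $0$ as an inhomogeneous term and show it cancels.

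For the odd case there is a further point: the integrand is only valid for $m\ge2$, so the recurrence is asserted from $m\ge2$. The lowest instance involves $o_W(2),o_W(3),o_W(4)$, and I would check it directly.

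As the external check demanded by Section~\ref{sec:method}, the recurrences can be tested on the listed values. In the even case, $e_W(1)=\int_0^\infty z^2e^{-z}\,dz=2$ and $e_W(2)=24-12+2=14$. At $m=1$ the recurrence gives $-10+\tfrac12\cdot79\cdot14-\tfrac32\cdot362=-10+553-543=0$, as required.

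A fallback route, if the integral certificate proves awkward, is the discrete one. Expand $(z-1)^{2m-2}$ binomially to get
\[
  e_W(m)=\sum_k(-1)^k\binom{2m-2}{k}(k+2)!,
\]
a definite sum of a proper hypergeometric term. Zeilberger's algorithm then produces an order-$2$ telescoper with a rational certificate. The boundary terms vanish there because the binomial vanishes outside $0\le k\le 2m-2$.
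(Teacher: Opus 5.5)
Your proposal is correct and takes essentially the paper's route: creative telescoping for $\int_0^\infty z^2(z-1)^{2m-2}e^{-z}\,dz$ (respectively exponent $2m-3$), with a certificate whose only finite pole is at $z=1$ and which must vanish at $z=0$ so that the boundary term dies; the paper imposes this in its conjugated formulation as the constraint $N(0)=0$. The obstacle you anticipate does not occur: with the stated coefficients, solving your system in both cases gives $z^{2}R=\sum_{j=1}^{6}g_j(m)(z-1)^{j}$ with polynomial $g_j$ and a double zero at $z=0$, so $R$ is $(z-1)$ times a cubic, exactly as you predicted, and neither the re-ansatz nor the fallback is needed.
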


Both were obtained by creative telescoping in the conjugated form: with
$\Omega=(z-1)^2$ and $\lambda=d_z\log\Omega$, one seeks
$\sum_i p_i(m)\Omega^i R=(d_z+m\lambda-1)(N/\Theta)$ with
$\Theta=(z-1)^{e}$ and $N$ constrained to have $z$-exponent $\ge1$, which is
exactly the boundary condition $N(0)=0$ needed for the umbral rule
$\langle\partial_zP-P\rangle=-P(0)$ to kill the inhomogeneity. Both identities
were verified symbolically over $\mathbb{Q}$, together with $N(0,m)=0$; neither
was obtained by sampling. The odd operator appears to be new. Both were
subsequently reproduced independently by a patched installation of
\texttt{ore\_algebra}, whose telescoper, after homogenisation by
$(g(m)S-g(m+1))\circ L$, is exactly proportional to the operators above.

\subsection{The $X$-part}

The $X$-part reduces, after elimination of the $t$-direction by partial
fractions, to a diagonal. Writing
\[
  F_1(v;t)=\frac{(1+v)(1+vt)}{D(v;t)},\qquad
  F_2(v;t)=\frac{-v^{2}t(1+v)(1+vt)}{D(v;t)^{2}},
\]
\[
  D(v;t)=1+(1+t-zt-w)v+t(1-z-w)v^{2},
\]
the grand generating function of $\mathrm{RIN}$ is rational, and $a_{2,2}(2m)$,
$a_{2,2}(2m-1)$ are central diagonal coefficients of it. Setting
$A_m=[v^{m}]F_1$ and $B_m=[v^{m}]F_2$, one has, with
$d_1=1+t-zt-w$ and $d_2=t(1-z-w)$:

\begin{proposition}\label{prop:cfinite}
$A_m$ and $B_m$ are $C$-finite in $m$ over $\mathbb{Q}(t,z,w)$ with
coefficients \emph{constant} in $m$:
\[
  A_k+d_1A_{k-1}+d_2A_{k-2}=0\ (k\ge3),
\]
\[
  B_k+2d_1B_{k-1}+(d_1^{2}+2d_2)B_{k-2}+2d_1d_2B_{k-3}+d_2^{2}B_{k-4}=0\ (k\ge5),
\]
the second operator being the square of the first, as the denominator $D^{2}$
requires.
\end{proposition}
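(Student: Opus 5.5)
The plan is to read both recurrences off the denominators. The key fact is that $D(v;t)=1+d_1v+d_2v^2$ is a polynomial in $v$ whose coefficients $d_1,d_2\in\mathbb{Q}(t,z,w)$ do not involve $v$. So both $F_1$ and $F_2$ are rational functions in $v$ over the field $K=\mathbb{Q}(t,z,w)$, and the standard correspondence between rational power series and $C$-finite sequences applies.

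For $A_m$, I would multiply through to get $D(v)F_1(v)=(1+v)(1+vt)$, a polynomial of degree $2$ in $v$. Comparing coefficients of $v^k$ gives $A_k+d_1A_{k-1}+d_2A_{k-2}=[v^k](1+v)(1+vt)$. The right side vanishes once $k\ge3$, which yields the first recurrence with constant coefficients.

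For $B_m$, the same step uses $D(v)^2F_2(v)=-v^2t(1+v)(1+vt)$, a polynomial of degree $4$. Expanding $D^2=1+2d_1v+(d_1^2+2d_2)v^2+2d_1d_2v^3+d_2^2v^4$ gives exactly the stated coefficients. Extracting $[v^k]$ gives a right side that is zero for $k\ge5$. Since these coefficients are those of $D^2$, the operator is the square of the first one: the shift polynomial $1+d_1S^{-1}+d_2S^{-2}$, squared in the commutative ring $K[S^{-1}]$. The shift commutes with constants, so no Ore-algebra subtlety arises.

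I do not expect a real obstacle. The only care needed is the index bookkeeping: checking that the thresholds $k\ge3$ and $k\ge5$ are exactly one more than the numerator degrees $2$ and $4$, with the convention that $A_m=B_m=0$ for $m<0$. As a guard, in the spirit of Section~\ref{sec:method}, I would confirm both recurrences on a truncated series expansion in $v$ with symbolic $t,z,w$.
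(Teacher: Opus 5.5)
Your proposal is correct and is essentially the paper's own justification: the paper gives no separate proof, only the remark that the operators are read off the denominators $D$ and $D^{2}$, together with a symbolic verification of the factor recurrences. Your comparison of coefficients in $D F_1=(1+v)(1+vt)$ and $D^{2}F_2=-tv^{2}(1+v)(1+vt)$ beyond degrees $2$ and $4$ is exactly the argument that remark encodes, and placing the operators in the commutative ring $\mathbb{Q}(t,z,w)[S^{-1}]$ makes the squaring claim literal.
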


Since the $X$-part is $F_2(s;t)F_1(\sigma;t)+F_1(s;t)F_2(\sigma;t)$, the even
diagonal collapses to a Hadamard product, and
\begin{equation}\label{eq:eX}
  e_X(m)=\langle k!\,j!\rangle\,[t^{m}]\bigl(2A_mB_m\bigr),
  \qquad
  o_X(m)=\langle k!\,j!\rangle\,[t^{m}]\bigl(B_mA_{m-1}+A_mB_{m-1}\bigr).
\end{equation}
The Hadamard product of $C$-finite sequences is $C$-finite with characteristic
roots the pairwise products $\{r_1^{2},r_1r_2,r_2^{2}\}$, each doubled; since
$r_1+r_2=-d_1$ and $r_1r_2=d_2$ are rational, the degree-$6$ characteristic
polynomial $(Y-r_1^{2})^{2}(Y-r_1r_2)^{2}(Y-r_2^{2})^{2}$ reduces to
$\mathbb{Q}(t,z,w)$ by construction rather than by search. Its order, $6$,
coincides with the order measured independently by fitting an operator to
$200$ terms of $e_X$.

Consequently $C_m=A_mB_m$ has a rational bivariate generating function
\[
  H(x,t;z,w)=\sum_{m\ge1}C_m(t)\,x^{m}=P/Q,
\]
with $Q=Q_1^{2}Q_2^{2}$ a perfect square, $\gcd(P,Q)=1$, and neither factor
vanishing at $z=0$ or $w=0$. The target becomes the diagonal
\[
  e_X(m)=\langle k!\,j!\rangle\,[x^{m}t^{m}]\,H(x,t;z,w).
\]

\subsection{Verification of the chain}

The chain above reproduces the five independently known exact values
$e_X(1,2,3)=0,-4,-48$ and $o_X(2,3)=-1,-11$, and correctly predicts
$e_X(4)=-2100$ and $o_X(4)=-301$, neither used in its construction; these agree
with $e_W(4)-a_{2,2}(8)=18806-20906$ and $o_W(4)-a_{2,2}(7)=2428-2729$
computed from the other side of the decomposition. Every intermediate step ---
the factor recurrences, the degree-$6$ operator, and $H=P/Q$ --- was verified
symbolically as an identity of rational functions in $(t,z,w)$.

\subsection{The obstruction}

Realising both extractions as residues, with $\Omega=1/(xt)$, the telescoping
identity takes the form
\[
  \sum_i p_i(m)\,\Omega^{i}R
  =(d_x+m\lambda_x)G_x+(d_t+m\lambda_t)G_t+(d_z-1)G_z+(d_w-1)G_w,
  \qquad p_i\in\mathbb{Q}[m].
\]
The umbral directions cannot be applied after the diagonal: a recurrence for
$[x^mt^m]H$ would have coefficients in $\mathbb{Q}(z,w)$, and $\langle
k!j!\rangle$ does not commute with multiplication by $z$ or $w$. All four
directions must therefore be carried simultaneously.

Matching pole orders on the two sides is then decisive. The $i=r$ term
contributes a pole at $x=0$ of order $r+1$; on the right only the
$x$-direction can raise the $x$-pole, producing $x^{-(a+1)}$ where
$\Theta=x^{a}t^{b}Q_1^{c_1}Q_2^{c_2}$. Hence
\[
  a\ge r,\qquad b\ge r,\qquad c_1,c_2\ge1 .
\]
At $r=6$, with the corresponding numerator degrees, this places the ansatz near
$70{,}000$ unknowns. Our solver performs dense elimination with cubic cost and
a measured rate of $U=2478$ in $29$ minutes; the required size is out of reach
by four orders of magnitude.

That this is a statement about the method is shown by what happened next. A
one-line defect in \texttt{ore\_algebra}'s multivariate branch was identified
and corrected --- the library's own doctests had been failing --- and the
patched installation reproduced the Hertzsprung control operator, then both
operators of Theorem~\ref{thm:W}, and then completed the first of the four
required eliminations on the full $a_{2,2}$ ideal in seventeen minutes, using
Gröbner machinery rather than dense linear algebra. The remaining eliminations
were in progress at the time of writing.

\part{Solid standard Young tableaux}

\section{The First Rigorous Challenge}\label{sec:syt}

\subsection{Statement}

A \emph{solid standard Young tableau} of a three-dimensional shape $S$ with $N$
cells is a bijective filling of the cells by $1,\dots,N$ increasing in each of
the three coordinate directions. We consider the two-layer shape
$S_n=[[n,n],[n,1]]$, of $3n+1$ cells, and write $g(n)$ for the number of solid
SYT of $S_n$. The sequence begins
\[
  2,\ 48,\ 1038,\ 22566,\ 500144,\ 11302300,\ 259808162,\ 6059911302,\dots
\]
Zeilberger computed $g(n)$ for $n\le40$, observed empirically that the sequence
is $P$-recursive of order $2$ with coefficients of degree $12$, and offered a
prize for a proof.

\begin{theorem}\label{thm:sytmain}
Let $K(n)=\dfrac{4^{n}(3n)!}{(n+1)!\,(2n+1)!}$ be the $n$-th Kreweras number,
and set
\[
  c(n)=\frac{8n^{2}+n-24}{4(n+2)(2n+3)},\qquad
  T(n)=\frac{7n+5}{5}\prod_{k=1}^{n}
   \frac{6(2k-1)(6k-1)(6k+1)}{(k+1)(4k+3)(4k+5)}.
\]
Then $g(n)=c(n)(3n+1)K(n)+T(n)$ for all $n\ge1$. Consequently $g$ lies in the
$2$-dimensional $\mathbb{Q}(n)$-module spanned by the hypergeometric terms
$(3n+1)K(n)$ and $T(n)$, and therefore satisfies a second-order linear
recurrence with polynomial coefficients; the operator, computed by Cramer's
rule and displayed in Appendix~\ref{app:op}, has coefficients of degree $12$
and coincides with Zeilberger's empirical recurrence. It has been verified
against the first $56$ terms.
\end{theorem}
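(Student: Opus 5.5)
The plan is to turn the count $g(n)$ into a lattice-walk count and then evaluate that count in closed form.

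\emph{Step 1: walk encoding.} A solid SYT of $S_n=[[n,n],[n,1]]$ is the same thing as a linear extension of the poset $S_n$. We read the filling in increasing order. Layer $1$ is an $n\times 2$ block (rows of lengths $n,n$). Layer $2$ has rows $n$ and $1$. The fill state at any time is recorded by the row lengths $(a_1,a_2;b_1,b_2)$, subject to
\[
a_1\ge a_2,\qquad b_1\ge b_2,\qquad b_1\le a_1,\qquad b_2\le \min(a_2,1).
\]
The single cell in layer-$2$ row $2$ is added exactly once. So we split according to the time $\tau$ at which that cell enters. Before and after $\tau$ the state is a walk of three coordinates $(a_1,a_2,b_1)$ in the cone $a_2\le a_1$, $b_1\le a_1$. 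Taking differences $x=a_1-a_2$ and $y=a_1-b_1$, this is a walk with steps that increase $a_1$, $a_2$ or $b_1$, that is $(1,1)$, $(-1,0)$, $(0,-1)$, confined to the quarter plane. This is the reverse-Kreweras step set mentioned in the abstract. The walk starts at the origin and ends at the origin, since $a_1=a_2=b_1=n$ at the end. The extra cell may be inserted at any time with $a_2\ge1$ and $b_2=0$, and this is automatic once $a_2\ge1$.

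\emph{Step 2: reduce to a sum.} The decomposition gives
\[
g(n)=\sum_{\text{excursions }w\text{ of length }3n}\#\{\text{admissible insertion slots}\}.
\]
Equivalently, $g(n)$ is $(3n+1)$ times the number of excursions, corrected by a sum over excursions of the number of steps taken before the first $(-1,0)$ step (the steps during which $a_2=0$). The excursion count of length $3n$ for reverse Kreweras walks returning to the origin is classical: it is the Kreweras number $K(n)$, because reversing the steps gives the Kreweras model. This explains the factor $(3n+1)K(n)$. The correction term is a weighted count of excursions by the time of first contact with a prescribed step type. I would compute it through a kernel-method functional equation with a catalytic variable for the prefix, using the algebraicity of the Kreweras generating function (Bousquet-Mélou).

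\emph{Step 3: closed form and certification.} I would aim to show that the correction equals $\bigl(c(n)-1\bigr)(3n+1)K(n)\cdot(-1)+T(n)$, adjusting signs to match, that is, a rational multiple of $(3n+1)K(n)$ plus the second hypergeometric term $T(n)$. The rigorous route is as follows. First derive an algebraic equation for the correction generating function by the kernel method. From it, obtain a linear recurrence. Then check that $g(n)-c(n)(3n+1)K(n)$ satisfies the first-order recurrence
\[
T(n)/T(n-1)=\frac{6(2n-1)(6n-1)(6n+1)(7n+5)}{(n+1)(4n+3)(4n+5)(7n-2)},
\]
together with the initial value. This check is a routine Zeilberger/Gosper certificate. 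The $(6k\pm1)$ and $(4k+3)(4k+5)$ factors indicate that $T$ arises from the diagonal-endpoint counts of the reverse Kreweras walk.

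The recurrence statement then follows formally. Two hypergeometric terms with rational ratio span a module closed under shift. Cramer's rule on $(g,Eg,E^2g)$ gives the order-$2$ operator, and the degree count and agreement with the stated $56$ terms are a direct check.

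The main obstacle is Step 2. The correction term requires solving the kernel equation with the extra "first $(-1,0)$ step" marking, and that equation is no longer one of the standard algebraic Kreweras equations. I would first try guess-and-prove: guess the algebraic equation from many terms, then verify it by substitution into the functional equation.
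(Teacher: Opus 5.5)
Your insertion condition in Step~1 is wrong, and the error carries through Steps~2 and~3. The single cell of layer-$2$ row $2$ lies above both the first cell of layer-$1$ row $2$ and the first cell of layer-$2$ row $1$; your own constraint $b_2\le b_1$ records the second of these. So the cell can be inserted after step $\tau$ only when $a_2(\tau)\ge1$ \emph{and} $b_1(\tau)\ge1$; it is not automatic once $a_2\ge1$. The case $n=1$ already shows the problem. The two excursions, with letter orders $a_1a_2b_1$ and $a_1b_1a_2$, have $2$ and $1$ slots with $a_2\ge1$, so your count gives $3$, whereas $g(1)=2$ (each walk admits only its final slot). With the correct condition, complementary counting over the $3n+1$ slots and the $a_2\leftrightarrow b_1$ symmetry give $g(n)=(3n+1)K(n)-2S(n)+T(n)$. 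Here $S(n)$ counts (excursion, slot) pairs with $a_2(\tau)=0$, which is the only correction you keep, and $T(n)$ counts those with $a_2(\tau)=b_1(\tau)=0$.

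This cannot be fixed by adjusting signs: the term $T(n)$ comes only from the doubly-constrained slots. A slot with $a_2=b_1=0$ has a prefix consisting of $a$ steps in $a_1$ alone, which leaves the difference walk at the diagonal point $(a,a)$. Reversing the completion gives $T(n)=\sum_a k(3n-a;a,a)$, the diagonal-endpoint sum that carries the $(6k\pm1)$ and $(4k+3)(4k+5)$ factors you noticed. Your single correction, by contrast, is a pure hypergeometric multiple of $K(n)$: $S(n)=k(3n+1;1,0)=\frac{3(9n+16)(3n+1)}{8(n+2)(2n+3)}K(n)$ (at $n=1$, $S=5$ while $T=4$). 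Since $(3n+1)K(n)$ and $T(n)$ are linearly independent over $\mathbb{Q}(n)$, the identity you propose to certify in Step~3 is false, and no Zeilberger--Gosper certificate for it exists.

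To repair the argument you need the two-condition inclusion--exclusion, followed by two separate evaluations. For $S$, observe that the prefix is a ballot sequence in $a_1,b_1$ ending at $(a,a-c)$. This turns $S(n)$ into a ballot-weighted sum of reverse-Kreweras counts, which the Catalan substitution $y^{*}=t\bx C(t^{2}\bx)$, with $C$ the Catalan series, collapses through the kernel. For $T$, you need an explicit diagonal generating function $Q_d$, obtained from the orbit combination and a canonical factorisation of the discriminant. A kernel equation marked by the first $(-1,0)$ step, as you propose, addresses neither of these.
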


The route passes through the enumeration of \emph{reverse Kreweras walks}:
quarter-plane walks with steps $E=(1,0)$, $N=(0,1)$, $D=(-1,-1)$. Writing
$k(m;i,j)$ for the number of such walks of length $m$ from the origin to
$(i,j)$, we prove two evaluation theorems of independent interest.

\begin{theorem}[Diagonal evaluation]\label{thm:B}
For all $n\ge0$, $\ \sum_{a=0}^{n}k(3n-a;a,a)=T(n)$. Moreover
\[
  k(3n-a;\,a,a)=K(n)\cdot
  \frac{(a+1)!\,(2a+1)!\,n!\,(3n-a)!}{4^{a}(a!)^{3}(n-a)!\,(3n)!},
  \qquad 0\le a\le n,
\]
and the diagonal generating function is algebraic:
\begin{equation}\label{eq:Qd}
  Q_d(x;t):=\sum_{m,a\ge0}k(m;a,a)x^{a}t^{m}
  =\frac{1}{\sqrt{1-xT^{2}}}
   \left(\frac{T}{2t}+\frac{\sqrt{1-xT^{2}}-1+\tfrac12xT^{2}}{t\,x^{2}}\right),
\end{equation}
where $T\equiv T(t)$ is the unique power series with $T=t(2+T^{3})$.
\end{theorem}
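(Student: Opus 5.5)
The plan has three stages: prove the closed form for $k(3n-a;a,a)$, derive the algebraic generating function $Q_d$ from the kernel method, and obtain the sum $T(n)$ by creative telescoping. The organising fact is that reverse Kreweras walks are the Kreweras walks (steps $W,S,NE$) run backwards. Reversal is a bijection, so $k(m;i,j)$ equals the number of Kreweras walks from $(i,j)$ to the origin of length $m$. The length is forced modulo $3$: each of $E,N,D$ changes $i+j$ by $+1,+1,-2$, so a walk ending at $(a,a)$ has $m\equiv 2a \pmod 3$. Writing $m=3n-a$ parametrises exactly these lengths, which is why the statement is indexed that way.

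\emph{Step 1 (closed form).} Use the known Bousquet-Mélou solution of the Kreweras kernel equation. Set $Q(x,y;t)=\sum k(m;i,j)x^iy^jt^m$; the kernel is $K(x,y)=1-t(x+y+\bar x\bar y)$. The functional equation reads
\[
xyK\,Q=xy-tQ(x,0)-tQ(0,y)+tQ(0,0)\cdot 0,
\]
with the corner term checked carefully: the step $D$ is forbidden whenever $i=0$ or $j=0$. By the $x\leftrightarrow y$ symmetry, $Q(x,0)=Q(0,x)$, and Bousquet-Mélou's invariant argument gives $Q(x,0)$ algebraic and parametrised by $T=t(2+T^3)$. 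To reach the diagonal, extract $[y^0]$ of $Q(x,\bar x y)$-type series, or more directly apply the orbit sum of the group of order $6$ to the reversed (Kreweras) model, where the orbit sum vanishes. For the endpoint counts, I expect a direct hypergeometric route to be cleaner: guess the closed form for $k(3n-a;a,a)$ from data, as stated, and prove it by showing that both sides satisfy the same recurrences in $n$ and in $a$. On the left, the recurrences come from the algebraic $Q_d$ via its annihilating ideal; on the right they are immediate because the expression is hypergeometric. Agreement on initial values then closes the argument.

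\emph{Step 2 (the algebraic function).} Verify \eqref{eq:Qd} by substituting the parametrisation $t=T/(2+T^3)$ and checking that the claimed $Q_d$ satisfies an explicit polynomial equation over $\mathbb{Q}(x,t)$ derived from the kernel-method solution. Then confirm that $Q_d$ has the right initial coefficients, so that it is the correct branch. A rigorous derivation goes through the diagonal of the rational-in-$Q(x,0)$ expression: compute $[y^0]Q(xy,\bar y)$-style constant terms by residues at the small root $Y_0(x)$ of the kernel. The appearance of $\sqrt{1-xT^2}$ is exactly the discriminant of that root.

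\emph{Step 3 (the sum).} Apply Zeilberger's algorithm to the hypergeometric summand in $a$. This yields a first-order recurrence in $n$ for $\sum_a k(3n-a;a,a)$, because the result is itself hypergeometric. Comparing with the ratio $T(n+1)/T(n)=6(2n+1)(6n+5)(6n+7)(7n+12)/\bigl((n+2)(4n+7)(4n+9)(7n+5)\bigr)$ and checking $n=0$ finishes the proof, with the WZ certificate verified as a rational identity. The main obstacle is Step 1: rigorously passing from the quarter-plane solution to the diagonal endpoints. I would first try the residue computation on the explicit algebraic $Q(x,0)$, and fall back on guess-and-prove of the bivariate recurrence for $k(m;a,a)$, certified by showing that it annihilates the algebraic $Q_d$.
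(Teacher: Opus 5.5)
The gap is in the step everything else rests on: you never actually derive \eqref{eq:Qd}, and your Steps 1 and 3 both presuppose it. Three things go wrong there.

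First, your functional equation is incorrect for this model. Because $D$ is forbidden on \emph{either} axis, inclusion--exclusion subtracts the origin twice, and the correct equation is $\bigl(xy-t(x^2y+xy^2+1)\bigr)Q=xy-tQ(x,0)-tQ(0,y)+tQ(0,0)$. With your zero corner term, setting $x=y=0$ gives $-tQ(0,0)=-2tQ(0,0)$, i.e.\ $Q(0,0)=0$. The corner term is absent only in the forward Kreweras model, where the two forbidden steps are different.

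Second, the full orbit sum vanishes for this group, so summing over all six orbit elements gives no equation for any unknown. The paper instead takes the half-orbit combination of $(x,y)$, $(\bar x\bar y,y)$, $(x,\bar x\bar y)$ (first minus second plus third), in which only $R(x)=tQ(x,0)$ survives. Taking $[y^0]$ after dividing by the kernel gives the diagonal equation $-\bar x\,Q_d(\bar x)\sqrt{\Delta(x)}=c-\bar x-2R(x)+2xY_0$. Your residue idea does reach the same point once the equation is corrected. Partial fractions for $[s^0]\,Q(s,x/s)$, followed by $x\mapsto\bar x$, give $\bar x\,Q_d(\bar x)\sqrt{\Delta(x)}=\bar x+c-2R(Y_0)$, and $K(x,Y_0)=0$ gives $R(Y_0)=xY_0-R(x)+c$.

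Third, and decisively, this equation still contains the unknown $R(x)$, and your plan has no way to remove it. The ``explicit polynomial equation derived from the kernel-method solution'' is never produced. The fallback --- a guessed recurrence ``certified by showing that it annihilates the algebraic $Q_d$'' --- is circular, since it assumes \eqref{eq:Qd}.

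The missing idea is the canonical factorisation of the discriminant. The factor $\sqrt{1-xT^2}$ is not ``the discriminant of that root''. The discriminant relevant to $Q_d(x)$ is, up to a factor $t^{-2}$, $\Delta(\bar x)$, which has one large and two small roots in $x$; $1-xT^2$ is only the factor carrying the large one. Write $\Delta(\bar x)=\Delta_0\,\Delta_+(x)\,\Delta_-(\bar x)$ with $\Delta_+(x)=1-xT^2$. Expand $2xY_0$ explicitly, apply $x\mapsto\bar x$, and divide the diagonal equation by $\sqrt{\Delta_0\Delta_-(\bar x)}$. This puts the $Q_d$-block in powers $x^{\ge1}$ and the $R$-block in powers $x^{\le0}$. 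Taking positive parts kills $R$ and gives \eqref{eq:Qd} directly, with $R$ as a by-product. Without this step you would need a correct closed form for $R$ to evaluate at $Y_0$ and simplify, and the paper notes that the published axis formula for this model contains a misprint.

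Once \eqref{eq:Qd} is in hand, your remaining steps are sound but roundabout. The per-$a$ formula follows by extracting $[x^a]$ directly: in $u=xT^2$ only $T^{2a+1}$ and $T^{2a+4}$ occur, and Lagrange inversion on $T=t(2+T^3)$ finishes it, with no guessing or annihilating ideal needed. Zeilberger's algorithm on the hypergeometric summand is a perfectly good alternative to the paper's route through the degree-$6$ algebraic function $Q_d(t;t)$ and its ODE. Just do not assume the telescoper will be first order, and check as many initial values as its order requires.
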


\begin{theorem}[Ballot-weighted evaluation]\label{thm:A}
With $\Bal(a,c)=\frac{a-c+1}{a+1}\binom{a+c}{c}$ the ballot numbers,
\[
  S(n):=\sum_{0\le c\le a}\Bal(a,c)\,k(3n-a-c;\,a,\,a-c)
  =k(3n+1;1,0)
  =\frac{3(9n+16)(3n+1)}{8(n+2)(2n+3)}K(n).
\]
\end{theorem}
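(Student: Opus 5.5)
The statement has two equalities, and I will prove them separately. The first is the combinatorial identity $S(n)=k(3n+1;1,0)$. The second is the closed form for $k(3n+1;1,0)$.

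\emph{Combinatorial identity.} Decompose a reverse Kreweras walk of length $3n+1$ ending at $(1,0)$ by reading it backwards from its endpoint. A point $(i,j)$ reached after $m$ steps satisfies $i+j\equiv 2m \pmod 3$, since $E$ and $N$ add $1$ to $i+j$ and $D$ subtracts $2$. Hence the endpoint $(1,0)$ after $3n+1$ steps, and a generic intermediate point $(a,a-c)$ after $3n-a-c$ steps, are both consistent.

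I will look at the last visit of the walk to the diagonal region and show that the remaining suffix is forced. My guess is that the suffix consists of $E$ and $D$ steps only. Such a suffix is a one-dimensional ballot path: it takes the point $(a,a-c)$ to $(1,0)$ using $a+c+1$ steps. Its count must be $\Bal(a,c)$, which is exactly the number of lattice paths with $a$ steps of one kind and $c$ of the other staying weakly on one side of the diagonal.

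Concretely, I would proceed as follows.
\begin{itemize}
\item Fix the first time $\tau$ after which the walk never uses an $N$ step. This splits the walk into a prefix counted by $k(\tau;a,a-c)$ and an $N$-free suffix.
\item Check that an $N$-free suffix from $(a,b)$ to $(1,0)$ has exactly $b$ steps $D$ and $a-1+b$ steps $E$. The length bookkeeping then needs $b=a-c$, and this is the place where I must adjust the decomposition so that the lengths match $3n-a-c$ and $a+c+1$.
\item Count the suffixes under the quarter-plane constraint, which reduces to a ballot condition on the partial sums. Identify that count with $\Bal(a,c)$ via the reflection principle.
\end{itemize}
If this naive cut does not give exactly $\Bal(a,c)$, I will instead cut at the last time the walk is on the line $\{i=j+c'\}$ for a suitable $c'$. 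Alternatively I will use the kernel method: write the generating function $Q(x,y;t)$ of reverse Kreweras walks, apply the operator that multiplies by the ballot generating function in $(x,y)$, and verify the resulting identity of series directly.

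\emph{Closed form.} To evaluate $k(3n+1;1,0)$, I will use the known algebraic nature of the Kreweras generating function, the same series $T=t(2+T^3)$ appearing in Theorem~\ref{thm:B}. By the kernel method, the section $Q(x,0;t)$ is expressed through $T$. Extracting the coefficient of $x^1$ gives a rational expression in $T$. Lagrange inversion in $T=t(2+T^3)$ then turns $[t^{3n+1}]$ of that expression into a finite sum of binomials, which should collapse to a single hypergeometric term. Comparing the ratio of consecutive terms with $K(n+1)/K(n)$ yields the factor $\frac{3(9n+16)(3n+1)}{8(n+2)(2n+3)}$.

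As a safeguard, I will check both sides for $n\le 10$ by direct enumeration of walks. Since both sides are hypergeometric, agreement of the first-order recurrence plus one initial value is a complete proof of the closed form.

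\emph{Main obstacle.} The hard part is the combinatorial step: finding the cut point that makes the suffix count exactly $\Bal(a,c)$ with the lengths $3n-a-c$ and $a+c+1$. If no clean bijection emerges, I will fall back on the generating-function verification. Both sides are then coefficients of algebraic series in $t$, and I will compare them through their minimal polynomials.
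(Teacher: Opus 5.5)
\paragraph{The closed form.} Your plan here is the paper's: the explicit axis series $R(x)=tQ(x,0;t)$ of Proposition~\ref{prop:R}, extraction of $[x^{1}]$, then Lagrange inversion in $T=t(2+T^{3})$. One caveat: Lagrange inversion produces a sum of several terms, each a rational multiple of $K(n)$. You must add them, rather than assume in advance that $k(3n+1;1,0)$ is hypergeometric.

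\paragraph{The gap: the first equality.} The cut you propose cannot work. Suppose a suffix of length $a+c+1$ runs from $(a,a-c)$ to $(1,0)$ with $e,\nu,\delta$ steps $E,N,D$. Then $e-\delta=1-a$, $\nu-\delta=c-a$ and $e+\nu+\delta=a+c+1$ force $\delta=a$, $\nu=c$, $e=1$. So the suffix is $N$-free only when $c=0$. Two smaller errors: the number of $E$ steps in an $N$-free suffix from $(a,b)$ is $b-a+1$, not $a-1+b$. And the invariant is $i+j\equiv m\pmod 3$, since every step changes $i+j$ by $1$ or $-2$; with your $2m$, the endpoint $(1,0)$ at time $3n+1$ would be impossible.

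Your fallbacks do not fill the gap. ``Apply the ballot operator and verify directly'' omits the idea that makes the paper's two-line proof work. Lambert's identity $\sum_c\Bal(b+c,c)z^{c}=C(z)^{b+1}$ turns the weighted sum into $\CT_x\bigl[C(t^{2}\bx)\,Q(x,y^{*})\bigr]$ with $y^{*}=t\bx C(t^{2}\bx)$. At this point the kernel collapses to the monomial $-tx^{2}y^{*}$, so after dividing the functional equation only $[x^{1}]R(x)$ survives the constant term. Comparing minimal polynomials presupposes an algebraic equation for $\sum_n S(n)t^{3n}$ that you have no means to produce.

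\paragraph{The repair.} The step count above already tells you the correct cut: split immediately before the \emph{last} $E$ step. A walk of length $3n+1$ to $(1,0)$ has $n+1$ of them, so this is well defined.
\begin{itemize}
\item The prefix is an arbitrary walk of length $3n-a-c$ to some $(a,a-c)$ with $0\le c\le a$.
\item The $E$ step is always legal.
\item What follows is a word in $a$ letters $D$ and $c$ letters $N$, running from $(a+1,a-c)$ to $(1,0)$. Its first coordinate stays $\ge1$ automatically. Read backwards, its second coordinate is a path from $0$ to $a-c$ with $a$ up-steps and $c$ down-steps staying $\ge0$. By reflection there are $\binom{a+c}{c}-\binom{a+c}{c-1}=\Bal(a,c)$ of these.
\end{itemize}
Concatenation inverts the cut, so $S(n)=k(3n+1;1,0)$ follows bijectively.

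With this repair your route genuinely differs from the paper's. The paper obtains the identity from the kernel collapse and explicitly poses a bijective proof as an open problem; this last-$E$ decomposition supplies one in a few lines. The paper's route has the advantage of running on the same functional equation that then yields the closed form.
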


\subsection{From tableaux to walks}

Order the rows of $S_n$ as $(1,1),(1,2),(2,1)$ (the three rows of length $n$)
and $(2,2)$ (the single cell). A solid SYT of $S_n$ is equivalent to a growth
sequence $\varnothing=P_0\subset\cdots\subset P_{3n+1}=S_n$ adding one cell at
a time. Recording after each step the numbers $x_{11},x_{12},x_{21},x_{22}$ of
filled cells in the four rows, the containment constraints are
\[
  n\ge x_{11}\ge x_{12},\qquad x_{11}\ge x_{21},\qquad x_{22}\le1,
\]
the cell of row $(2,2)$ being addable exactly when $x_{12}\ge1$ and
$x_{21}\ge1$.

\begin{lemma}[Deletion--insertion]\label{lem:bij}
Solid SYT of $S_n$ are in bijection with pairs $(w,\tau)$ where $w$ is a
lattice walk of length $3n$ from $(0,0,0)$ to $(n,n,n)$ with unit steps in the
coordinates $(x_{11},x_{12},x_{21})$ staying in the cone $x_{11}\ge x_{12}$,
$x_{11}\ge x_{21}$, and $\tau\in\{0,\dots,3n\}$ satisfies $x_{12}(\tau)\ge1$
and $x_{21}(\tau)\ge1$.
\end{lemma}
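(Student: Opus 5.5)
The plan is to delete the single cell of row $(2,2)$ from the growth sequence and record when it was added. Given a solid SYT of $S_n$, i.e.\ a growth sequence $P_0\subset\cdots\subset P_{3n+1}$, let $\tau+1$ be the step at which the cell of row $(2,2)$ is added. Removing that step gives a sequence of $3n$ steps in the coordinates $(x_{11},x_{12},x_{21})$, which is the walk $w$. The pair is $(w,\tau)$: $\tau$ is the number of steps of $w$ taken before the insertion. Conversely, given $(w,\tau)$, we insert the $(2,2)$ step immediately after the first $\tau$ steps of $w$.

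\textbf{Step 1 (the forward map is well defined).} Each step of the growth sequence other than the $(2,2)$ step increases exactly one of $x_{11},x_{12},x_{21}$ by one. The coordinates start at $(0,0,0)$ and end at $(n,n,n)$, so $w$ has length $3n$. The constraints $x_{11}\ge x_{12}$ and $x_{11}\ge x_{21}$ are the containment conditions of $S_n$ in the first layer and in the first column of the second layer. These conditions involve only the three long rows, so they hold along $w$ because they held along the original sequence. The bound $x_{11}\le n$ is automatic, since the walk ends at $x_{11}=n$ and coordinates never decrease. At the moment of insertion the cell $(2,2)$ must be addable, which means $x_{12}\ge1$ and $x_{21}\ge1$ at time $\tau$ of $w$. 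Also $0\le\tau\le 3n$.

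\textbf{Step 2 (the inverse map is valid and the maps are mutually inverse).} Given $(w,\tau)$ with $x_{12}(\tau)\ge1$ and $x_{21}(\tau)\ge1$, we insert the $(2,2)$ step after step $\tau$. We must check that every intermediate set is an order ideal of $S_n$. This is where one needs the full list of cover relations of the shape $[[n,n],[n,1]]$. Cell $(2,2,1)$ lies above $(1,2,1)$ and $(2,1,1)$ and is covered by nothing, because row $(2,2)$ has length $1$. No other cell of $S_n$ lies above $(2,2,1)$ in any coordinate direction. So the presence of the $(2,2)$ cell imposes no constraint on later steps, and its addability is exactly $x_{12}\ge1$, $x_{21}\ge1$. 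Composing the two maps in either order gives the identity, because both maps act by deleting or inserting a single step at a recorded position.

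\textbf{Main obstacle.} The only delicate point is confirming that the stated inequalities are the \emph{complete} set of containment conditions among the three long rows. In particular, we must rule out a missing condition comparing $x_{12}$ with $x_{21}$, or one involving row $(2,2)$ and later cells. I would handle this by writing cells as $(i,j,k)$, with $(i,j)$ indexing the row and $k\le$ the row length, and listing the generating covers: $(1,1,k)\to(1,2,k)$, $(1,1,k)\to(2,1,k)$, $(1,2,1)\to(2,2,1)$, $(2,1,1)\to(2,2,1)$, plus the within-row covers $k\to k+1$. Rows $(1,2)$ and $(2,1)$ are incomparable, so no condition links $x_{12}$ and $x_{21}$. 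Once this list is written down, the lemma follows immediately.
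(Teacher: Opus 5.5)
Your proposal is correct and is the same argument as the paper's: delete the cell $(2,2,1)$, record $\tau$ as the number of cells added before it, and invert by re-inserting that cell after step $\tau$. The paper only asserts that what remains is a growth sequence of $[[n,n],[n]]$ and that the re-insertion condition is $x_{12}(\tau)\ge1$, $x_{21}(\tau)\ge1$. Your list of cover relations supplies the justification for both assertions: $(2,2,1)$ is maximal, its only lower covers are $(1,2,1)$ and $(2,1,1)$, and rows $(1,2)$ and $(2,1)$ are incomparable.
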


\begin{proof}
Given a solid SYT, delete the cell of row $(2,2)$ and record $\tau$ as its
label minus one, the number of cells added before it. What remains is a growth
sequence of the shape $[[n,n],[n]]$, i.e.\ a walk $w$ as described; the
insertion condition at time $\tau$ is as stated. The inverse re-inserts the
cell after step $\tau$ and shifts later labels by one.
\end{proof}

In the difference coordinates $u=x_{11}-x_{12}$, $v=x_{11}-x_{21}$ the three
step types become $(1,1),(-1,0),(0,-1)$ and the cone becomes the quarter plane:
$w$ is a Kreweras excursion of length $3n$, whence the count $K(n)$ of Kreweras
\cite{Kreweras}. Complementary counting over the $3n+1$ slots, using the
$x_{12}\leftrightarrow x_{21}$ symmetry, gives:

\begin{lemma}\label{lem:cc}
$g(n)=(3n+1)K(n)-2S(n)+T(n)$, where
$S(n)=\sum_\tau\#\{w:x_{12}(\tau)=0\}$ and
$T(n)=\sum_\tau\#\{w:x_{12}(\tau)=x_{21}(\tau)=0\}$.
\end{lemma}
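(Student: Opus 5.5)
We apply inclusion--exclusion over the $3n+1$ insertion slots, using Lemma~\ref{lem:bij}. By that lemma, $g(n)$ counts pairs $(w,\tau)$ with $w$ a Kreweras excursion of length $3n$ (there are $K(n)$ of them) and $\tau\in\{0,\dots,3n\}$ such that $x_{12}(\tau)\ge1$ and $x_{21}(\tau)\ge1$.

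Counting all pairs gives $(3n+1)K(n)$. Write $E_{12}$ for the set of pairs with $x_{12}(\tau)=0$ and $E_{21}$ for those with $x_{21}(\tau)=0$. Complementary counting then yields
\[
g(n)=(3n+1)K(n)-|E_{12}|-|E_{21}|+|E_{12}\cap E_{21}|,
\]
where $|E_{12}|=S(n)$ and $|E_{12}\cap E_{21}|=T(n)$ by definition.

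It remains to show $|E_{21}|=|E_{12}|$. The cone conditions $x_{11}\ge x_{12}$ and $x_{11}\ge x_{21}$, the unit step set, and the endpoints $(0,0,0)$, $(n,n,n)$ are all invariant under swapping the coordinates $x_{12}$ and $x_{21}$. This swap is therefore an involution on the set of walks $w$. Keeping $\tau$ fixed, it maps $E_{12}$ bijectively onto $E_{21}$. Hence $|E_{21}|=S(n)$, and substituting into the display gives $g(n)=(3n+1)K(n)-2S(n)+T(n)$.

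The only point needing care is that the swap preserves the cone, which is immediate because the cone is defined symmetrically in $x_{12}$ and $x_{21}$. No step here is a real obstacle: the substantive work lies in evaluating $S(n)$ and $T(n)$ afterwards, in Theorems~\ref{thm:A} and~\ref{thm:B}.
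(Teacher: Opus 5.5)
Your proof is correct and follows the paper's argument, which is exactly complementary counting over the $3n+1$ insertion slots, using the $x_{12}\leftrightarrow x_{21}$ symmetry to identify the two single-axis terms. You write out explicitly the inclusion--exclusion and the swap involution that the paper states in one line.
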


\begin{lemma}[Prefix factorisation]\label{lem:pf}
\[
  T(n)=\sum_{a=0}^{n}k(3n-a;a,a),\qquad
  S(n)=\sum_{0\le c\le a}\Bal(a,c)\,k(3n-a-c;a,a-c).
\]
\end{lemma}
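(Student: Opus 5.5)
The plan is to prove both identities by cutting each Kreweras walk $w$ at the insertion time $\tau$. The prefix $w[0,\tau]$ is an arbitrary cone walk ending at a state with the prescribed zero coordinates. The suffix $w[\tau,3n]$ is a cone walk from that state to $(n,n,n)$. The prefix count is the easy factor, and the work is in counting the suffixes.

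For the prefix, write the state at time $\tau$ in the original coordinates.

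\emph{Case $T$.} If $x_{12}(\tau)=x_{21}(\tau)=0$, then the only steps taken so far increase $x_{11}$. So the prefix is forced: it is $a$ steps in $x_{11}$ with $a=\tau$, and there is exactly one such prefix. In difference coordinates it ends at $(u,v)=(a,a)$, having used only $(1,1)$ steps.

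\emph{Case $S$.} If $x_{12}(\tau)=0$ and $x_{21}(\tau)=c$, $x_{11}(\tau)=a$, the prefix is a two-letter word in $x_{11}$- and $x_{21}$-steps with $x_{21}\le x_{11}$ throughout. That is a ballot path, counted by $\Bal(a,c)$, with $\tau=a+c$. Its endpoint in difference coordinates is $(u,v)=(a,a-c)$.

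For the suffix, I would not count forward to the fixed endpoint $(n,n,n)$. Instead I would reverse time. Reversed, the suffix runs from $(u,v)=(0,0)$ back to the cut state. The Kreweras steps $(1,1),(-1,0),(0,-1)$ become the reverse-Kreweras steps $(-1,-1),(1,0),(0,1)$, and the quarter-plane constraint is preserved. The suffix length is $3n-\tau$. This gives $3n-a$ in the $T$ case and $3n-a-c$ in the $S$ case, matching $k(3n-a;a,a)$ and $k(3n-a-c;a,a-c)$.

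It then remains to check that the concatenation is a bijection onto the walks counted by the indicator. Any prefix ending at the cut state concatenates with any suffix from it, because the cone condition is local to each piece. The remaining detail is the ranges: $\tau\le 3n$ forces $a\le n$ in the $T$ case, while in the $S$ case all $0\le c\le a$ are allowed, with vanishing terms where the cut state is unreachable. Summing over $\tau$, equivalently over $a$, respectively over $(a,c)$, gives both formulas.

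I expect the main obstacle to be bookkeeping rather than mathematics. The two places to be careful are the sign conventions in the coordinate change, so that the reversed steps really are $E,N,D$ and the endpoint is $(a,a-c)$ rather than $(a-c,a)$, and the $x_{12}\leftrightarrow x_{21}$ symmetry already used in Lemma~\ref{lem:cc}. I would confirm both on $n=1$, where $g(1)=2$, before writing anything further.
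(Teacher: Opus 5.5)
Your proposal is correct and is essentially the paper's proof: cut at $\tau$, count the prefix (forced in case $T$, a ballot word in case $S$) ending at $(a,a)$ resp.\ $(a,a-c)$, and reverse the Kreweras suffix into a reverse-Kreweras walk of length $3n-\tau$. One small correction: $\tau\le 3n$ does not force $a\le n$; the bound comes from $x_{11}(\tau)=a\le n$ (equivalently $k(3n-a;a,a)=0$ for $a>n$, since such a walk would need $n-a<0$ diagonal steps), and no $x_{12}\leftrightarrow x_{21}$ symmetry is needed here because $S(n)$ is already defined through $x_{12}(\tau)=0$.
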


\begin{proof}
If $x_{12}(\tau)=x_{21}(\tau)=0$ the prefix consists of $a:=\tau$ steps of type
$11$ only, leaving the state $(u,v)=(a,a)$; the completion is an arbitrary
quarter-plane walk from $(a,a)$ to $(0,0)$ of length $3n-a$, which on reversing
time becomes a reverse-Kreweras walk from the origin to $(a,a)$. If only
$x_{12}(\tau)=0$, the prefix uses $a$ steps of type $11$ and $c$ of type $21$
with $x_{11}\ge x_{21}$ throughout --- a ballot sequence, $\Bal(a,c)$ choices
--- ending at $(a,a-c)$.
\end{proof}

\subsection{The reverse Kreweras model and two extractions}

Let $Q(x,y;t)=\sum_{m,i,j\ge0}k(m;i,j)x^{i}y^{j}t^{m}$. Constructing walks step
by step, a $D$-step being forbidden on either axis, yields
\begin{equation}\label{eq:FE}
  \bigl(xy-t(x^{2}y+xy^{2}+1)\bigr)Q(x,y)=xy-R(x)-R(y)+c,
\end{equation}
with $R(x):=tQ(x,0;t)$ and $c:=tQ(0,0;t)$; here $Q(x,0)=Q(0,x)$ by symmetry and
$+c$ arises from inclusion--exclusion at the origin. The kernel
$K(x,y)=xy-t(x^{2}y+xy^{2}+1)$, as a quadratic in $y$, has roots with
\[
  Y_0Y_1=\bx,\qquad Y_0+Y_1=\tfrac1t-x,\qquad
  \Delta(x):=(1-tx)^{2}-4t^{2}\bx,
\]
$Y_0=\frac{(1-tx)-\sqrt{\Delta(x)}}{2t}$ being the power-series root. The
rational kernel $1-t(x+y+\bx\by)$ is invariant under
$(x,y)\mapsto(\bx\by,y)$ and $(x,y)\mapsto(x,\bx\by)$, generating an orbit of
six pairs as in \cite[\S2.3]{BM05} and \cite[\S2.3]{Mishna}.

\begin{lemma}[Validity]\label{lem:valid}
All manipulations of this subsection are identities of formal series in $t$
whose coefficients are Laurent polynomials in $x$ and $y$. Indeed
$[t^{m}]Q(x,y)$ is a polynomial of degree at most $m$ in each variable, so the
substitutions produce elements of $\mathbb{Q}[x,\bx,y,\by][[t]]$. For the
partial-fraction step, factor $K(x,y)=-tx(y-Y_0)(y-Y_1)$; the roots satisfy
$Y_0=t\bx+O(t^{2})$ and $1/Y_1=xY_0$, both in
$t\,\mathbb{Q}[x,\bx][[t]]$, so the expansions
$\frac{1}{y-Y_0}=\by\sum_{k\ge0}(Y_0\by)^{k}$ and
$\frac{1}{y-Y_1}=-\frac1{Y_1}\sum_{k\ge0}(y/Y_1)^{k}$ converge coefficient-wise
in $t$. Hence $[y^{0}]$ of any product below is well defined and computable
term by term; the same applies to the splittings $[\cdot]^{>0}$,
$[\cdot]^{\le0}$ in $x$.
\end{lemma}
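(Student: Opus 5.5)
We establish the Validity lemma in three stages, each a statement about formal series in $t$ whose coefficients lie in a ring of Laurent polynomials.

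\emph{Stage 1: $Q$ and the substitutions.} A walk of length $m$ with steps $E,N,D$ starts at the origin and each step raises a coordinate by at most one. Hence $[t^m]Q(x,y)$ is a polynomial of degree at most $m$ in each of $x$ and $y$. It follows that $Q\in\mathbb{Q}[x,y][[t]]$, and likewise $R(x)$ and $c$. The orbit substitutions replace $x$ or $y$ by $\bx\by$, which is a Laurent monomial. Each $t$-coefficient therefore stays a finite Laurent polynomial, so every substituted series lies in $\mathbb{Q}[x,\bx,y,\by][[t]]$. Sums and products of such series are again of this form, because each $t$-coefficient is a finite sum.

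\emph{Stage 2: the kernel roots.} We expand $K=xy-tx^{2}y-txy^{2}-t$ as $-tx\,(y-Y_0)(y-Y_1)$. Matching coefficients, the leading coefficient in $y$ is $-tx$, and Vieta gives $Y_0Y_1=t/(tx)=\bx$ and $Y_0+Y_1=(x-tx^{2})/(tx)=1/t-x$, as stated.

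For $Y_0$ we take the power-series root. The discriminant $\Delta(x)=(1-tx)^2-4t^2\bx$ lies in $1+t\,\mathbb{Q}[x,\bx][[t]]$, so $\sqrt{\Delta}$ is well defined by the binomial series and has Laurent-polynomial coefficients. Expanding,
\[
\sqrt{\Delta}=1-tx-2t^2\bx+O(t^3),
\]
which gives $Y_0=t\bx+O(t^2)$. Alternatively, $Y_0$ is the unique solution in $t\,\mathbb{Q}[x,\bx][[t]]$ of the fixed-point equation $Y=t(\bx+xY+Y^2)$, obtained from $K(x,Y)=0$ divided by $x$. This equation is a contraction in the $t$-adic topology, so the solution exists and is unique.

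From $Y_1=\bx/Y_0$ we then get $1/Y_1=xY_0\in t\,\mathbb{Q}[x,\bx][[t]]$.

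\emph{Stage 3: convergence of the expansions.} In $\frac{1}{y-Y_0}=\by\sum_{k\ge0}(Y_0\by)^k$, the $k$-th term has $t$-valuation at least $k$. Each $t$-coefficient of the sum is therefore a finite sum and lies in $\mathbb{Q}[x,\bx,y,\by]$. The same argument applies to $\frac{1}{y-Y_1}=-\frac{1}{Y_1}\cdot\frac{1}{1-y/Y_1}$, since $y/Y_1=yxY_0$ has positive valuation.

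It follows that every product appearing below lies in $\mathbb{Q}[x,\bx,y,\by][[t]]$. Taking $[y^0]$, or splitting into parts $[\cdot]^{>0}$ and $[\cdot]^{\le0}$ in $x$, acts coefficientwise on each finite Laurent polynomial. These operations are thus well defined, linear, and computable term by term.

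The only point needing care is the valuation claim $Y_0=t\bx+O(t^2)$ with all coefficients Laurent polynomials. The fixed-point argument settles it cleanly, and it avoids any question of which branch of the square root is meant. Everything else is bookkeeping of $t$-adic valuations.
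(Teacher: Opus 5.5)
Your proposal is correct and takes essentially the same approach as the paper, which gives its justification inline in the lemma itself: the degree bound on $[t^{m}]Q$, the factorisation $K=-tx(y-Y_0)(y-Y_1)$ with $Y_0=t\bx+O(t^{2})$ and $1/Y_1=xY_0$ of positive $t$-valuation, and coefficient-wise convergence of the two geometric expansions. You additionally supply details the paper leaves implicit: the Vieta check, the expansion of $\sqrt{\Delta}$, the $t$-adic fixed-point characterisation of $Y_0$, and the valuation bookkeeping.
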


Substituting the pairs $(x,y)$, $(\bx\by,y)$, $(x,\bx\by)$ into \eqref{eq:FE}
and forming (first)$-$(second)$+$(third), so that only $R(x)$ survives among
the unknown univariate series, then dividing by the rational kernel, expanding
by partial fractions in $y$ and extracting the constant term in $y$ --- only
the middle term contributes, producing the diagonal --- yields the
\emph{diagonal equation}
\begin{equation}\label{eq:star}
  -\,\bx\,Q_d(\bx;t)\,\sqrt{\Delta(x)}=c-\bx-2R(x)+2xY_0 .
\end{equation}
Under $x\mapsto\bx$ this is equation (2.4) of \cite{Mishna}; we verified
\eqref{eq:star} independently to order $t^{11}$.

\subsubsection*{Canonical factorisation}

The Laurent polynomial $x\Delta(x)=t^{2}x^{3}-2tx^{2}+x-4t^{2}$ has one small
root $X_0=4t^{2}+O(t^{3})$ and two large ones; writing
$\Delta=\Delta_0\Delta_+(x)\Delta_-(\bx)$ in the Kreweras frame one finds, with
$T=t(2+T^{3})$,
\[
  \Delta_0=\frac{4t^{2}}{T^{2}},\qquad
  \Delta_+(x)=1-xT^{2},\qquad
  \Delta_-(\bx)=1-\bx T\bigl(1+\tfrac{T^{3}}{4}\bigr)+\bx^{2}\tfrac{T^{2}}{4}.
\]

\begin{proposition}\label{prop:Qd}
$Q_d(x;t)$ is given by \eqref{eq:Qd}. In particular
$Q_d(0;t)=(4T-T^{4})/(8t)$ recovers the excursion generating function.
\end{proposition}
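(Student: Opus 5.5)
Starting from the diagonal equation \eqref{eq:star}, I will solve for $Q_d$ by a Wiener--Hopf splitting in $x$. Rewrite \eqref{eq:star} as
\[
  -\bx\,Q_d(\bx)\sqrt{\Delta(x)} = c-\bx-2R(x)+2xY_0 ,
\]
and divide by $\sqrt{\Delta_+(x)}$, using the canonical factorisation $\Delta=\Delta_0\Delta_+(x)\Delta_-(\bx)$. The left side becomes $-\bx Q_d(\bx)\sqrt{\Delta_0\Delta_-(\bx)}$, a series in $\bx$ only, with coefficients in $\mathbb{Q}[[t]]$. The unknown $R(x)/\sqrt{\Delta_+(x)}$ on the right is a series in nonnegative powers of $x$. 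By Lemma~\ref{lem:valid}, these manipulations are legitimate in $\mathbb{Q}[x,\bx][[t]]$.

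Applying the projection $[\cdot]^{<0}$ in $x$ then eliminates $R$ and $c$. The result is
\[
  -\bx Q_d(\bx)\sqrt{\Delta_0\Delta_-(\bx)}
  =\Bigl[\frac{-\bx+2xY_0}{\sqrt{\Delta_+(x)}}\Bigr]^{<0}.
\]
The projection of $-\bx/\sqrt{\Delta_+}$ is simply $-\bx$, since $1/\sqrt{1-xT^2}$ has constant term $1$. For the term $2xY_0/\sqrt{\Delta_+}$, I will write $2tY_0=(1-tx)-\sqrt{\Delta}$. Dividing by $\sqrt{\Delta_+}$ gives
\[
  \frac{1-tx}{\sqrt{\Delta_+}}-\sqrt{\Delta_0\Delta_-} ,
\]
in which the second piece is purely nonpositive in $x$. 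Its negative part is $\sqrt{\Delta_0}\bigl(\sqrt{\Delta_-(\bx)}-1\bigr)$, and the first piece is polynomial times a positive series. Multiplying through by $x/t$ shifts degrees, so the negative part of $\tfrac{x}{t}\bigl[(1-tx)\Delta_+^{-1/2}\bigr]$ vanishes. What remains is $-\tfrac{x}{t}\sqrt{\Delta_0\Delta_-(\bx)}$, whose $[\cdot]^{<0}$ part picks up terms of $\sqrt{\Delta_-}$ of degree at least $2$ in $\bx$. Thus everything reduces to explicit truncations of $\sqrt{\Delta_0\Delta_-(\bx)}$, and dividing by $-\bx\sqrt{\Delta_0\Delta_-(\bx)}$ gives a closed form for $Q_d(\bx)$.

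This closed form is a priori expressed through $\Delta_-(\bx)$, whereas \eqref{eq:Qd} is expressed through $\Delta_+$ evaluated at the substituted variable. The main obstacle is to reconcile the two. I will first check, as an algebraic identity in $T$ using $T=t(2+T^3)$, that after replacing $\bx\mapsto x$ the formula simplifies to \eqref{eq:Qd}. I expect $\Delta_-(x)$ to factor so that the square-root ratio collapses to $\sqrt{1-xT^2}$, presumably $\Delta_-(x)=(1-xT^2)(1-x\,T^3\cdot\text{const}/\ldots)$ up to the $T$-relation. If it does not collapse, the fallback is to show that both sides satisfy the same polynomial equation over $\mathbb{Q}(x,t)$, which can be checked by resultants, and then to match enough initial coefficients in $t$ to fix the branch.

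For the special value, I will let $x\to0$ in \eqref{eq:Qd}. Expanding $\sqrt{1-xT^2}=1-\tfrac12xT^2-\tfrac18x^2T^4+O(x^3)$, the bracket becomes $\tfrac{T}{2t}-\tfrac{T^4}{8t}$, which gives
\[
  Q_d(0;t)=\frac{4T-T^4}{8t}.
\]
Finally, I will confirm that this matches the known Kreweras excursion series, by comparing the first few coefficients: $1,2,16,\ldots$ at $t^{0},t^{3},t^{6}$.
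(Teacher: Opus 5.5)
Your Wiener--Hopf strategy is the right one, but you divide by a factor that is not a factor of $\Delta(x)$, and this error is what creates your ``main obstacle''. The stated factorisation $\Delta_0\,\Delta_+(x)\,\Delta_-(\bx)$, with $\Delta_+(x)=1-xT^2$ and $\Delta_-$ quadratic, factors $\Delta(\bx)=1-2t\bx+t^2\bx^2-4t^2x$; you can check this using $T=t(2+T^3)$. It does not factor $\Delta(x)=1-2tx+t^2x^2-4t^2\bx$. The product has degree $1$ in $x$ and degree $2$ in $\bx$, which is the opposite of $\Delta(x)$, and the large roots of $\Delta(x)$ lie near $1/t$, not at $T^{-2}\approx 1/(4t^2)$.

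So dividing \eqref{eq:star} by $\sqrt{1-xT^2}$ does not leave $-\bx Q_d(\bx)\sqrt{\Delta_0\Delta_-(\bx)}$ on the left. Carrying your steps through literally gives
$Q_d(x)=\Delta_-(x)^{-1/2}\Bigl(\tfrac{T}{2t}+\tfrac{1}{tx^2}\bigl(\sqrt{\Delta_-(x)}-1+\tfrac{T}{2}(1+\tfrac{T^3}{4})x\bigr)\Bigr)$, which is false. At $x=0$ it equals $\tfrac{T}{2t}-\tfrac{T^5}{16t}-\tfrac{T^8}{128t}=1+4t^3-2t^4+\cdots$ instead of $1+2t^3+16t^6+\cdots$, and a $t^4$ term is impossible for excursions. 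The collapse you hope for cannot occur, since $\Delta_-(x)$ does not vanish at $x=T^{-2}$. Your resultant fallback would only confirm that the two expressions differ.

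The repair is to use the factorisation in the correct frame. Applying $x\mapsto\bx$ to the stated identity gives $\Delta(x)=\Delta_0\,\Delta_-(x)\,(1-\bx T^2)$. You should therefore divide \eqref{eq:star} by $\sqrt{\Delta_-(x)}$, which is the factor that is a power series in $x$ with constant term $1$. Every later step of yours then goes through unchanged. The projection $[\cdot]^{<0}$ kills the $R$- and $c$-terms, sends $-\bx/\sqrt{\Delta_-(x)}$ to $-\bx$, kills $x(1-tx)/(t\sqrt{\Delta_-(x)})$, and keeps only the terms of degree $\ge2$ in $\bx$ of $\sqrt{1-\bx T^2}$. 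This gives
\[
  -\bx\,Q_d(\bx)\sqrt{\Delta_0}\sqrt{1-\bx T^2}
  =-\bx-\frac{x}{t}\sqrt{\Delta_0}\Bigl(\sqrt{1-\bx T^2}-1+\tfrac12\bx T^2\Bigr).
\]
With $\Delta_0^{-1/2}=T/(2t)$ and the renaming $\bx\to x$, this is exactly \eqref{eq:Qd}, with nothing left to reconcile.

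Corrected this way, your argument is the paper's proof in mirror image. The paper first applies $x\mapsto\bx$, giving \eqref{eq:daggerbar}, so that the stated factorisation applies; it then divides by $\sqrt{\Delta_0\Delta_-(\bx)}$ and projects onto $[\cdot]^{>0}$. Your evaluation at $x=0$ is fine. Matching $1,2,16$ is only a sanity check, since $Q_d(0;t)=\sum_m k(m;0,0)t^m$ by definition.
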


\begin{proposition}\label{prop:R}
\[
  R(\bx)=\tfrac12\Bigl[c-x+\tfrac{\bx}{t}-\bx^{2}
   +\bigl(x+T-\tfrac{2\bx}{T}\bigr)\sqrt{\Delta_-(\bx)}\Bigr].
\]
\end{proposition}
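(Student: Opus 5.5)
The plan is to derive Proposition~\ref{prop:R} from the diagonal equation \eqref{eq:star} by a single positive/negative splitting in $x$, using the canonical factorisation of $\Delta$. First I apply $x\mapsto\bx$ to \eqref{eq:star}, passing to the Kreweras frame in which $\Delta$ is factorised:
\[
  -\,x\,Q_d(x;t)\sqrt{\Delta(\bx)}=c-x-2R(\bx)+2\bx\,Y_0(\bx).
\]
I then check the factorisation $\Delta(\bx)=\Delta_0\,\Delta_+(x)\,\Delta_-(\bx)$ as an identity of Laurent polynomials in $x$, with coefficients in $\mathbb{Q}[[t]]$. This amounts to matching the three coefficients of $x$, $1$ and $\bx$ after multiplying out, and reducing each using $T=t(2+T^{3})$.

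Dividing by $\sqrt{\Delta_-(\bx)}$ gives
\[
  L(x):=-\,x\,Q_d(x;t)\sqrt{\Delta_0\Delta_+(x)}
  =\frac{c-x-2R(\bx)+2\bx Y_0(\bx)}{\sqrt{\Delta_-(\bx)}} .
\]
The left side involves only powers $x^{k}$ with $k\ge1$. Indeed $Q_d$ and $\sqrt{\Delta_+(x)}=\sqrt{1-xT^{2}}$ are power series in $x$, and $\sqrt{\Delta_0}=2t/T$. By contrast, $(c-2R(\bx))/\sqrt{\Delta_-(\bx)}$ is a power series in $\bx$. Lemma~\ref{lem:valid} justifies the extraction of $[x^{\le0}]$ on both sides coefficient-wise in $t$, and it yields
\[
  \frac{c-2R(\bx)}{\sqrt{\Delta_-(\bx)}}
  =-\,[x^{\le0}]\,\frac{2\bx Y_0(\bx)-x}{\sqrt{\Delta_-(\bx)}} .
\]

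Next I make the right-hand bracket explicit. From $Y_0(\bx)=\bigl((1-t\bx)-\sqrt{\Delta(\bx)}\bigr)/(2t)$ and the factorisation,
\[
  \frac{2\bx Y_0(\bx)}{\sqrt{\Delta_-(\bx)}}
  =\frac{\bx(1-t\bx)}{t\sqrt{\Delta_-(\bx)}}
  -\frac{\bx}{t}\cdot\frac{2t}{T}\sqrt{1-xT^{2}} .
\]
The first term is already a series in $\bx$, so it passes through $[x^{\le0}]$ unchanged. In the second term only the constant and linear terms of $\sqrt{1-xT^{2}}=1-\tfrac12xT^{2}-\cdots$ survive multiplication by $\bx$ and extraction, contributing $-2\bx/T+T$. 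The term $-x/\sqrt{\Delta_-(\bx)}$ contributes $-x\bigl(\Delta_-^{-1/2}-1-\alpha\bx\bigr)$, where $\alpha=\tfrac12T(1+T^{3}/4)$ is the $\bx$-coefficient of $\Delta_-^{-1/2}$. Multiplying back by $\sqrt{\Delta_-(\bx)}$ and solving for $R(\bx)$ then gives
\[
  R(\bx)=\tfrac12\Bigl[c-x+\tfrac{\bx}{t}-\bx^{2}
   +\bigl(x+\beta-\tfrac{2\bx}{T}\bigr)\sqrt{\Delta_-(\bx)}\Bigr],
\]
where $\beta$ is the constant collected from the $\alpha$ term together with the $+T$ above.

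The main obstacle I expect is this final bookkeeping: confirming that the collected constants reduce exactly to $x+T-2\bx/T$, and that the rational part collapses to $c-x+\bx/t-\bx^{2}$. Both require repeated use of $T=t(2+T^{3})$, in particular $1/t=(2+T^{3})/T$. I would carry out this reduction symbolically. As the external check demanded by Section~\ref{sec:method}, I would then expand the resulting formula for $R$ to order $t^{11}$ and compare it with brute-force counts of $k(m;i,0)$. Once $R$ is established, Proposition~\ref{prop:Qd} also follows, by reading off the positive part $L(x)$ and dividing by $-x\sqrt{\Delta_0\Delta_+}$.
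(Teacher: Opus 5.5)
Your route is the paper's: pass to the Kreweras frame, factor $\Delta(\bx)=\Delta_0\Delta_+(x)\Delta_-(\bx)$, and take the nonpositive part in $x$ to kill the $Q_d$-block. (The paper substitutes $Y_0$ first and moves the known term $\tfrac{\bx}{t}\sqrt{\Delta_+(x)}$ to the left; you keep it on the right. That difference is immaterial.)

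\textbf{The error.} The decisive extraction is wrong as written. Since $\Delta_-(\bx)^{-1/2}=1+\alpha\bx+\cdots$, one has $-x\Delta_-^{-1/2}=-x-\alpha-\cdots$. Hence
\[
[x^{\le0}]\bigl(-x\Delta_-^{-1/2}\bigr)=-x\bigl(\Delta_-^{-1/2}-1\bigr),
\]
and the constant $-\alpha$ belongs to this part. Your expression $-x\bigl(\Delta_-^{-1/2}-1-\alpha\bx\bigr)$ is the strictly negative part: it drops that constant. Carried through your own bookkeeping, this gives $\beta=T+\alpha$. Since $\alpha=\tfrac12T\bigl(1+\tfrac{T^{3}}{4}\bigr)=t+O(t^{4})\ne0$, no use of $T=t(2+T^{3})$ can reduce this to $T$. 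Concretely, your formula would give $[t^{1}]R=\tfrac32$ instead of $k(0;0,0)=1$.

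\textbf{The fix.} With the correct extraction, the bookkeeping you flag as the main obstacle disappears. Collecting the three contributions gives
\[
\bigl(c-2R(\bx)\bigr)\Delta_-^{-1/2}
=-\frac{\bx(1-t\bx)}{t}\,\Delta_-^{-1/2}+\frac{2\bx}{T}-T+x\,\Delta_-^{-1/2}-x .
\]
Multiplying by $\sqrt{\Delta_-(\bx)}$ yields the proposition exactly, with constant $T$ and rational part $c-x+\bx/t-\bx^{2}$. No appeal to $T=t(2+T^{3})$ is needed here; that relation enters only in checking the factorisation of $\Delta(\bx)$, which you already planned to do. So the plan is sound, but the step you call the main obstacle is precisely where your computation goes wrong. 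Leaving $\beta$ to be ``collected'' and reduced symbolically is not yet a proof of the stated formula.
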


\begin{proof}[Proof of Propositions \ref{prop:Qd} and \ref{prop:R}]
Both follow from one splitting. Substituting
$2xY_0=x(1-tx)/t-(x/t)\sqrt{\Delta(x)}$ into \eqref{eq:star} and rearranging,
\begin{equation}\label{eq:dagger}
  \sqrt{\Delta(x)}\Bigl(\frac{x}{t}-\bx Q_d(\bx)\Bigr)
  =c-\bx+\frac{x}{t}-x^{2}-2R(x),
\end{equation}
and applying $x\mapsto\bx$,
\begin{equation}\label{eq:daggerbar}
  \sqrt{\Delta(\bx)}\Bigl(\frac{\bx}{t}-xQ_d(x)\Bigr)
  =c-x+\frac{\bx}{t}-\bx^{2}-2R(\bx).
\end{equation}
Dividing \eqref{eq:daggerbar} by $\sqrt{\Delta_0\Delta_-(\bx)}$ yields the
\emph{split form}
\begin{equation}\label{eq:split}
  \sqrt{\Delta_+(x)}\Bigl(\frac{\bx}{t}-xQ_d(x)\Bigr)
  =\frac{c-x+\bx/t-\bx^{2}-2R(\bx)}{\sqrt{\Delta_0}\sqrt{\Delta_-(\bx)}}.
\end{equation}
Classify supports in $x$, per coefficient of $t^{m}$. On the left the unknown
block $xQ_d(x)\sqrt{\Delta_+(x)}$ involves only powers $x^{\ge1}$, since
$Q_d(x)\in\mathbb{Q}[x][[t]]$ and $\sqrt{1-xT^{2}}\in\mathbb{Q}[x][[t]]$. On
the right the unknown block $2R(\bx)/(\sqrt{\Delta_0}\sqrt{\Delta_-(\bx)})$
involves only powers $x^{\le0}$.

\emph{Positive part.} Applying $[\cdot]^{>0}$ kills the $R$-block:
\[
  xQ_d(x)\sqrt{\Delta_+(x)}
  =\Bigl[\sqrt{\Delta_+(x)}\tfrac{\bx}{t}\Bigr]^{>0}
   -\Bigl[\tfrac{T}{2t}\tfrac{c-x+\bx/t-\bx^{2}}{\sqrt{\Delta_-(\bx)}}\Bigr]^{>0}.
\]
Writing $\sqrt{\Delta_+(x)}=\sum_{j\ge0}\binom{1/2}{j}(-xT^{2})^{j}$, the first
extraction is $\bigl(\sqrt{1-xT^{2}}-1+\tfrac{xT^{2}}{2}\bigr)/(tx)$ (the terms
$j\ge2$). In the second, every product of $\{c,\bx/t,\bx^{2}\}$ with the
$\bx$-series $1/\sqrt{\Delta_-(\bx)}$ has only powers $x^{\le0}$; the single
term reaching a positive power is
$(-x)\cdot[\bx^{0}]\tfrac{1}{\sqrt{\Delta_-(\bx)}}=-x$, contributing
$+xT/(2t)$ after the outer sign. Hence
\[
  xQ_d(x)\sqrt{\Delta_+(x)}
  =\frac{\sqrt{1-xT^{2}}-1+\tfrac{xT^{2}}{2}}{tx}+\frac{xT}{2t},
\]
and dividing by $x\sqrt{\Delta_+(x)}$ gives \eqref{eq:Qd}.

\emph{Nonpositive part.} Applying $[\cdot]^{\le0}$ instead kills the
$Q_d$-block:
\[
  \frac{2R(\bx)}{\sqrt{\Delta_0}\sqrt{\Delta_-(\bx)}}
  =\Bigl[\tfrac{T}{2t}\tfrac{c-x+\bx/t-\bx^{2}}{\sqrt{\Delta_-(\bx)}}\Bigr]^{\le0}
   -\Bigl[\sqrt{\Delta_+(x)}\tfrac{\bx}{t}\Bigr]^{\le0},
\]
where
\[
  \bigl[\sqrt{\Delta_+(x)}\,\bx/t\bigr]^{\le0}=\bx/t-T^{2}/(2t)
  \quad\text{(the terms $j\le1$)},\qquad
  \bigl[-x/\sqrt{\Delta_-(\bx)}\bigr]^{\le0}
   =-x\bigl(1/\sqrt{\Delta_-(\bx)}-1\bigr).
\]
Multiplying by
$\tfrac12\sqrt{\Delta_0}\sqrt{\Delta_-(\bx)}$ gives
Proposition~\ref{prop:R}.

Both propositions were verified against the walk counts to orders $t^{15}$ and
$t^{13}$ respectively, and every displayed intermediate identity was checked
coefficient-wise. Proposition~\ref{prop:R} is equivalent to, and corrects a
typographical corruption in the published form of, the axis result of
\cite{Mishna}.
\end{proof}

\subsection{Proof of Theorem~\ref{thm:B}}

A reverse-Kreweras walk to $(a,a)$ has length $\equiv2a\pmod3$, so the
substitution $x=t$ aligns exponents:
\[
  A(t):=Q_d(t;t)=\sum_{n\ge0}\Bigl(\sum_a k(3n-a;a,a)\Bigr)t^{3n}
  =\sum_{n\ge0}\widetilde T(n)t^{3n}.
\]
By Proposition~\ref{prop:Qd}, $A$ is an explicit element of the field
$\mathbb{Q}(t,T,\sigma)$ with $\sigma=\sqrt{1-tT^{2}}$; eliminating $\sigma$
and $T$ by resultants gives a minimal polynomial $P(t,A)$ of degree $6$
(Appendix~\ref{app:ode}), a polynomial in $t^{3}$, with $P(0,B)=2B-2$, so by
Hensel's lemma $A$ is the unique power-series root with constant term $1$.

Differentiation is explicit in the tower: $T'=(2+T^{3})/(1-3tT^{2})$ and
$\sigma'=-(T^{2}+2tTT')/(2\sigma)$, with reductions $T^{3}=(T-2t)/t$ and
$\sigma^{2}=1-tT^{2}$. Computing $A',A'',A'''$ in the $6$-dimensional basis
$\{T^{i}\sigma^{j}\}$ and solving a linear system over $\mathbb{Q}(t)$ produces
an exact inhomogeneous ODE
\[
  q_c(t)+q_0(t)A+q_1(t)A'+q_2(t)A''+q_3(t)A'''=0
\]
with the polynomial coefficients of Appendix~\ref{app:ode}. Every step is exact
rational-function arithmetic; as a safeguard the ODE was verified on the series
of $A$ to order $t^{19}$. Extracting coefficients converts the ODE into a
recurrence supported on shifts $\{-6,-3,0\}$ in the exponent of $t$, hence a
second-order recurrence $\sum_{i=0}^{2}\gamma_i(n)\widetilde T(n-i)=0$ valid
for $n\ge2$. One then checks --- an identity of rational functions, verified
symbolically --- that the hypergeometric ratio
\[
  \frac{T(m)}{T(m-1)}
  =\frac{6(2m-1)(6m-1)(6m+1)(7m+5)}{(m+1)(4m+3)(4m+5)(7m-2)}
\]
satisfies the same recurrence, that $\gamma_0(n)$ has no integer zeros $n\ge2$,
and that the initial values agree. By induction $\widetilde T(n)=T(n)$.

For the per-$a$ closed form, extract $[x^{a}]$ from \eqref{eq:Qd}: with
$u=xT^{2}$, $[u^{j}](1-u)^{-1/2}=\binom{2j}{j}4^{-j}$ and
\[
  [u^{j}]\Bigl(1-\bigl(1-\tfrac u2\bigr)(1-u)^{-1/2}\Bigr)
  =-\binom{2j-2}{j-1}4^{-(j-1)}\frac{j-1}{2j}\qquad(j\ge1),
\]
an identity of binomial coefficients checked symbolically. Hence
\[
  [x^{a}]Q_d=\frac1t\left[\frac{\binom{2a}{a}}{2\cdot4^{a}}T^{2a+1}
   -\frac{(a+1)\binom{2a+2}{a+1}}{2(a+2)4^{a+1}}T^{2a+4}\right],
\]
and Lagrange inversion, $[t^{N}]T^{k}=\frac kN\binom{N}{(N-k)/3}2^{N-(N-k)/3}$
at $N=3n-a+1$, gives $k(3n-a;a,a)$ as an explicit two-term Gamma expression.
That it equals the product form of Theorem~\ref{thm:B} --- equivalently that
the ratio in $a$ is $\frac{(a+2)(n-a)(2a+3)}{2(a+1)^{2}(3n-a)}$ --- is a
Gamma-quotient identity whose ratio to the claimed form simplifies to $1$;
verified symbolically, and numerically for all $0\le a\le n\le8$. \qed

\subsection{Proof of Theorem~\ref{thm:A}}

Let $C(z)=1+zC(z)^{2}$ be the Catalan series. By Lambert's generalised
binomial theorem $\sum_{c\ge0}\Bal(b+c,c)z^{c}=C(z)^{b+1}$, so the
ballot-weighted sum telescopes into a single substitution:
\[
  \sum_n S(n)t^{3n}
  =\CT_x\Bigl[C(t^{2}\bx)\,Q\bigl(x,\;t\bx C(t^{2}\bx);\,t\bigr)\Bigr].
\]
Write $y^{*}=t\bx C(t^{2}\bx)=\frac{1-\sqrt{1-4t^{2}\bx}}{2t}$, a series in
strictly negative powers of $x$ satisfying $txy^{*2}-xy^{*}+t=0$, i.e.\
$xy^{*}=t+txy^{*2}$. Substituting into the kernel gives the collapse
\[
  K(x,y^{*})=xy^{*}-t(x^{2}y^{*}+xy^{*2}+1)=-t\,x^{2}y^{*}.
\]
Since $C(t^{2}\bx)/(ty^{*})=x/t^{2}$, the functional equation \eqref{eq:FE}
yields
\[
  C(t^{2}\bx)Q(x,y^{*})
  =-\frac{1}{t^{2}x}\bigl(xy^{*}-R(x)-R(y^{*})+c\bigr).
\]
Taking $\CT_x$: the term $xy^{*}$ has no $x^{1}$-coefficient, $R(y^{*})$ is a
series in $t$ and negative powers of $x$, and $c$ is free of $x$; only $R(x)$
survives:
\begin{equation}\label{eq:collapse}
  \sum_n S(n)t^{3n}=\frac{1}{t^{2}}[x^{1}]R(x)
  =\frac1t\sum_m k(m;1,0)t^{m},
\end{equation}
proving $S(n)=k(3n+1;1,0)$. For the closed form, extract $[x^{1}]$ from
Proposition~\ref{prop:R}: with $\beta=1+\tfrac{T^{3}}{4}$,
\[
  [x^{1}]R(x)=\tfrac12\Bigl[\tfrac1t-\tfrac2T-\tfrac{T^{2}\beta}{2}
   +\tfrac{T^{2}}{8}(1-\beta^{2})\Bigr],
\]
and under the uniformisation $t=T/(2+T^{3})$ this equals
$t^{2}\cdot\frac{40t^{2}-18tT+(108t^{3}-1)T^{2}}{256t^{5}}$ identically in $T$
(a rational-function identity, verified symbolically). Lagrange inversion then
gives
\[
  S(n)=\tfrac{1}{256}\bigl(-18L(3n{+}4,1)+108L(3n{+}2,2)-L(3n{+}5,2)\bigr)
  =\frac{3(9n+16)(3n+1)}{8(n+2)(2n+3)}K(n),
\]
the last equality a Gamma-quotient identity verified symbolically. \qed

\subsection{The module structure and the recurrence}

By Lemmas \ref{lem:cc}, \ref{lem:pf} and Theorems \ref{thm:A}, \ref{thm:B},
\[
  g(n)=(3n+1)K(n)-2S(n)+T(n)=c(n)(3n+1)K(n)+T(n).
\]
Both $A(n):=(3n+1)K(n)$ and $T(n)$ are hypergeometric with ratios
\[
  r_A(n)=\frac{6(3n-1)(3n+1)}{(n+1)(2n+1)},\qquad
  r_T(n)=\frac{6(2n-1)(6n-1)(6n+1)(7n+5)}{(n+1)(4n+3)(4n+5)(7n-2)},
\]
which are not equal as rational functions, so $A$ and $T$ span a free
$\mathbb{Q}(n)$-module of rank $2$ closed under the shift. The three vectors
expressing $g(n),g(n+1),g(n+2)$ in the basis $(A(n),T(n))$,
\[
  w_i=\Bigl(c(n{+}i)\textstyle\prod_{j=1}^{i}r_A(n{+}j),\
   \prod_{j=1}^{i}r_T(n{+}j)\Bigr),\qquad i=0,1,2,
\]
are forced to be linearly dependent; Cramer's rule
$\gamma_i=(-1)^{i}\det(w_j,w_k)$ produces an explicit order-$2$ operator which,
after clearing denominators and content, has degree $12$ and is displayed in
Appendix~\ref{app:op}. It agrees up to a scalar with the operator fitted from
Zeilberger's data and annihilates all $56$ computed terms. \qed

\begin{remark}
The identity $S(n)=k(3n+1;1,0)$ equates a ballot-weighted double sum of
general-endpoint walk counts with a single near-axis count. Our proof is a
two-line kernel computation; a bijective proof would be desirable, and we pose
finding one as an open problem.
\end{remark}

\begin{remark}
The closed form for diagonal-endpoint reverse-Kreweras walks complements
Kreweras' classical axis formula and the diagonal result of
Bousquet-M\'elou \cite[Thm.~2]{BM05} for the forward model; we have not found
it in the literature.
General endpoints provably admit no single hypergeometric closed form of
comparable shape.
\end{remark}

\section{The Second Rigorous Challenge: the cone exponent}\label{sec:cone}

The Second Rigorous Challenge asks for a proof that the number $a(n)$ of solid
SYT of the cylindrical shape $(2,1,1)\times[n]$ is \emph{not} holonomic.
Equivalently, $a(n)$ counts walks $(0,0,0,0)\to(n,n,n,n)$ with positive unit
steps in $\mathbb{Z}^{4}$ confined to the cone $x_1\ge x_2\ge x_3$,
$x_1\ge x_4$. We did not settle it; we record what we established, since it
isolates the obstruction sharply.

\subsection{Reduction to a spectral question}

Since the generating function has integer coefficients and finite radius of
convergence, holonomicity would make it a $G$-function, and by the
Andr\'e--Chudnovsky--Katz theorem its singularity exponents would be rational.
The exponent is $\beta=\nu+3/2$ with
\[
  \nu=\sqrt{\lambda_1+\tfrac14}-\tfrac12,
\]
where $\lambda_1$ is the principal Dirichlet eigenvalue of the spherical
triangle cut out by the cone, whose walls $x_1=x_2$, $x_2=x_3$, $x_1=x_4$ give
interior angles $(\pi/3,\pi/2,2\pi/3)$. Irrationality of $\nu$ would establish
non-holonomicity.

\subsection{What is proved}

Domain monotonicity gives a rigorous bracket. The triangle contains a single
$A_3$ Weyl chamber, of angles $(\pi/3,\pi/2,\pi/3)$, for which $\nu=6$ and
$\lambda=42$; and it is contained in a lune of angle $2\pi/3$, for which
$\nu=3/2$ and $\lambda=15/4$. Hence
\[
  \tfrac{15}{4}<\lambda_1<42,\qquad \tfrac32<\nu<6 .
\]
Its area is $\pi/2$, exactly three $A_3$ chambers, consistent with the walls
generating the full reflection group of $S_4$.

The triangle is \emph{not} a reflection fundamental domain, since $2\pi/3$ is
not of the form $\pi/k$, so the classical rational-exponent formula does not
apply. An exhaustive search for polynomial principal eigenfunctions through
degree $11$ found harmonic multiples of the Vandermonde only at degrees $6$,
$9$, $10$, all sign-changing on the interior and hence not principal; the
degree-$6$ one does prove that $\lambda=42$ lies in the Dirichlet spectrum. No
closed form is therefore available.

A Rayleigh--Ritz computation with an admissible trial space gives the rigorous
upper bound
\[
  \lambda_1\le 13.7506809520491,
\]
and locates $\lambda_2\approx26.01$.

\subsection{What is computed}

Two independent numerical determinations agree:
\[
  \lambda_1=13.7443552132132,\qquad \nu=3.2409029943607 .
\]
The first is the method of particular solutions in its Betcke--Trefethen
subspace-angle form, expanding about the $2\pi/3$ vertex in the
Fourier--Legendre basis $P_\nu^{-\mu_k}(\cos\theta)\sin(\mu_k\varphi)$,
$\mu_k=k\pi/\alpha$, and collocating the remaining edge; it was calibrated
against the $A_3$ chamber, reproducing $\nu=6$ to $5\times10^{-10}$, and
successive basis sizes agree to thirteen digits. The second is extrapolation of
the sequence asymptotics $a(n)\sim C\cdot256^{n}n^{-\beta}$ from a
shell-indexed dynamic program.

Two traps in the spectral computation are worth recording, both of which
initially produced wrong answers: at integer $\nu$ the basis develops
identically zero columns, since $P_n^{-m}=0$ for integer $m>n$, and these must
be dropped before the factorisation; and the plain smallest singular value has
a spurious floor arising from near-degenerate high-order columns, which
completely masked the eigenvalue on the $2\pi/3$-vertex scan --- the
Betcke--Trefethen subspace-angle form is essential, not cosmetic.

No rational of denominator at most $60$ is compatible with $\nu$; the nearest,
$175/54$, differs by $1.6\times10^{-4}$, some nine orders of magnitude beyond
the numerical uncertainty.

\subsection{The obstruction}

Excluding rationals of \emph{bounded} denominator suffices only given a bound
on the denominator a rational exponent could have. Such a bound would follow
from height bounds on a hypothetical annihilating operator: the exponent is a
root of the indicial polynomial, whose leading coefficient is controlled by the
operator's coefficient heights, and the rational root theorem then limits the
denominator. Kauers and Johansson, in a computation reported in \cite[Feb.~28, 2012
update]{EZssyt}, established from $6000$ terms that no recurrence exists with
$(\mathrm{ORDER}+1)(\mathrm{DEGREE}+1)<3000$. This bounds the \emph{size} of a
hypothetical annihilating operator but not the \emph{heights} of its
coefficients, and it is the heights that control the indicial polynomial.
We regard closing this gap --- effective height bounds for $G$-operators
annihilating a given sequence --- as the substantive open problem, and note
that the same wall blocks Conjectures 1a and 1b of \cite{KZ}.

\part{Asymptotics and computation}

\section{Kauers--Zeilberger Conjectures 2a and 2b}\label{sec:kz}

\subsection{Statement}

Let $G(n)$ be the number of standard Young tableaux of shape $[n,n,n]$ in which
every run in every row has length $\ge2$, and $H(n)$ the analogue for odd run
lengths \cite{KZ}. Conjectures 2a and 2b assert
\[
  G(n)\sim C_1\,\frac{8^{n}}{n^{4}},\qquad
  H(n)\sim C_2\,\frac{(7+5\sqrt2)^{n}}{n^{4}} .
\]

\begin{theorem}\label{thm:kz}
Both conjectures hold, with $C_1,C_2>0$ given by explicit sums over the
finitely many admissible boundary configurations. Numerically
\[
  C_1=0.52128605909(2),\qquad C_2=0.6389278129(4),
\]
both determined in \S\ref{sec:c1num}, the latter computed here for the first
time.
\end{theorem}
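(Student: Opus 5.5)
The plan is to encode three-row SYT of shape $[n,n,n]$ with a run-length restriction as excursions of a Markov-modulated random walk in the cone $\{x_1\ge x_2\ge x_3\}$, and then apply a local limit theorem for such excursions. A tableau is a word in the letters $1,2,3$, where letter $r$ means "place the next entry in row $r$". The ballot condition says the prefix counts satisfy $x_1\ge x_2\ge x_3$. The run condition refers to maximal runs of consecutive integers within a row, which are maximal runs of a repeated letter in the word. It is therefore enforced by a finite automaton whose state records the current letter and a bounded amount of run-length information. For $G$ (runs $\ge2$) this is the current letter together with whether the run has length $1$ or $\ge2$. For $H$ (odd runs) it is the current letter together with the parity of the run length. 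So $G(n)$ and $H(n)$ count paths in $\mathbb{Z}^3\times\mathcal S$, with $\mathcal S$ a finite state set, that start near the origin, stay in the Weyl chamber of $A_2$, end at $(n,n,n)$, and begin and end in admissible automaton states. These are the "finitely many admissible boundary configurations" of the statement.

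Step 1: identify the growth rate. Project out the drift direction $(1,1,1)$ and form the transfer matrix $M(\theta)$ on $\mathcal S$ with entries weighted by $e^{\langle\theta,\text{step}\rangle}$. The exponential growth of excursions is $\min_\theta\rho(M(\theta))$, attained at the zero-drift point. By the $S_3$ symmetry this minimum sits at $\theta=0$ restricted to the balanced direction. For $G$ the Perron root should come out to $8^{1/3}\cdot$(per-step factor), with per-tableau base $8$. For $H$ the characteristic polynomial should yield $7+5\sqrt2=(1+\sqrt2)^3$, whose cube root $1+\sqrt2$ is the per-step Perron root. I would verify both by explicit computation of the small characteristic polynomials.

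Step 2: identify the polynomial exponent. For a zero-drift walk in a cone with opening angle $\alpha$ in dimension $d$, Denisov--Wachtel gives excursion asymptotics $\rho^{m}m^{-p-d/2}$ with $p=\pi/\alpha$. Here the chamber of $A_2$ is a planar wedge of angle $\pi/3$, so $p=3$ and $d=2$. Measured per $n$ with $m=3n$, this gives $n^{-4}$ after the covariance normalisation: the chamber must first be mapped to an isotropic frame, and the Weyl chamber stays a $\pi/3$ wedge under the $S_3$-symmetric covariance. For Markov-modulated steps I would invoke the Markov-additive extension of Denisov--Wachtel (via Nagaev--Guivarc'h spectral perturbation plus the harmonic-function coupling). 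The exponent is unchanged, and the constant becomes
\[
C=\kappa\sum_{s,s'}u(x_0,s)\,\pi(s')\,v(s')\ldots,
\]
a universal factor times products of the positive discrete cone-harmonic function evaluated at the admissible start and end configurations.

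Step 3 is positivity and the constants. $C_1,C_2>0$ follow from strict positivity of the harmonic function at admissible starting configurations, using aperiodicity or irreducibility of the modulated chain after passing to the correct sublattice (the parity constraints in $H$ need care here). The numerics of \S\ref{sec:c1num} then evaluate the finite sums. The main obstacle is Step 2's local limit theorem in the Markov-modulated cone setting: Denisov--Wachtel is stated for i.i.d.\ steps, so its extension must be checked. That requires the strong aperiodicity/lattice hypothesis on the product chain, and boundary effects from the automaton near the walls (forbidden letters when $x_1=x_2$) must not alter the harmonic function's existence. I would first try a reduction to i.i.d.\ steps by regeneration at a fixed automaton state: runs between returns form i.i.d.\ blocks. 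This gives a random-length i.i.d.\ walk, to which the known cone results plus a renewal argument apply, at the cost of controlling block overshoot across walls.
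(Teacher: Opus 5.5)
Your encoding and the two computations you call routine are right. The reduced transfer matrices for $G$ and $H$ have characteristic polynomials $\lambda^{2}-\lambda-2$ and $\lambda^{2}-2\lambda-1$, giving per-letter rates $2$ and $1+\sqrt2$. The $S_3$-invariance of the asymptotic covariance does make the isotropised chamber a $\pi/3$ wedge, so the exponent is $m^{-p-d/2}=m^{-4}$ with $m=3n$.

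Your unit-step automaton is also a legitimate alternative to the paper's model. The paper takes whole runs as the steps of a Markov-additive process: a direction chain uniform on the other two letters, and a run law $\propto w^{r}$ tilted to $2W(w_c)=1$, so every excursion has weight $2w_c^{3n}$ whatever its number of runs. It must therefore carry a length coordinate and sum a (position, length) local limit theorem over the number of runs. Your model has bounded increments and fixed time instead.

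The gap is that the one hard step, the cone local limit theorem for a Markov-modulated walk, is exactly the step you defer, and neither of your suggestions supplies it. Nagaev--Guivarc'h perturbation yields only the free-space local CLT (the paper's Lemma A). What breaks is the construction of the positive harmonic function. The conditional drift given the automaton state is not zero: for $G$, from a run-length-$1$ state the next letter is forced. So $\E_{x,s}[u(S_1)]-u(x)$ contains $\nabla u(x)\cdot\mathrm{drift}(s)$, of order $|x|^{p-1}$, and a state-dependent covariance term of order $|x|^{p-2}$. Both are too large for the $f$-estimate $|f|\le\widetilde\varepsilon(d(x))\,|x|^{p-1}d(x)^{-1}\gamma(d(x))$ on which the Denisov--Zhang construction rests.

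The missing idea is the Poisson corrector. Solve $(P-I)c=-\mathrm{drift}$ and $(P-I)q=-(C(s)-\bar C)$ on the finite state space, so that $S_k+c(s_k)$ is an exact martingale. Then run the Denisov--Zhang argument on the (Markov) pair chain with $\Lambda=u+\nabla u\cdot c+\tfrac12\langle q,\mathrm{Hess}\,u\rangle$ in place of $u$. This is how the paper makes the transport work, and it would serve your model equally well.

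Your regeneration fallback does not reduce matters to the i.i.d.\ theory. Blocks between returns to a fixed automaton state contain several runs of different letters, and within a block the walk can leave the chamber and re-enter it. For instance, with regeneration at the start of a run of $1$s, take a point with $x_2=x_3<x_1$. The block $1\,1\,3\,3\,2\,2\,2$ (all runs of length at least $2$) violates $x_2\ge x_3$ after the $3$s and restores it after the $2$s. Checking the cone only at block boundaries therefore counts a different set of paths; this is not an overshoot error to be controlled. The paper's choice of single runs as steps is precisely what makes boundary checking exact, since the path is monotone within a run. The price is Markov rather than i.i.d.\ modulation, but that chain is reversible with uniform law, which together with the swap symmetry gives $C\propto V(\mathbf{0})^{2}$.

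Finally, the digits in the statement cannot come from ``evaluating the finite sums'': the apex value of the harmonic function has no known closed form. The paper obtains $C_1$ and $C_2$ from exact dynamic-programming enumeration ($n\le500$ for $G$, $n\le200$ for $H$). It then applies Richardson, Salzer and tail-fit extrapolation to $a(N)N^{4}\mu^{-N}$. Your plan has no counterpart to that step.
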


Theorem~\ref{thm:kz} is a consequence of a local limit theorem which we believe
is of independent interest.

\begin{theorem}[Excursion LLT for run-modulated tandem walks]\label{thm:llt}
Let $R$ be a run law with $\Pr(R=r)\propto w_c^{r}$ on an arithmetic support
with exponential tails, tilted to criticality, driving the
uniform-on-other-two direction chain with increments $Rv_j$, where
$v_0=(1,0)$, $v_1=(-1,1)$, $v_2=(0,-1)$. Let $x_0,y_0$ be apex-adjacent
starting and ending configurations and $\Lat$ the (position, length) increment
lattice. Then, with $V,\widehat V$ the discrete harmonic functions of Lemma B
and its reverse, $\varrho=[\mathbb{Z}^{3}:\Lat]$ and
$\sigma^{2}=\mathrm{Var}(R)$,
\[
  \Pr_{x_0,i}\bigl(\tau>k,\ S_k=y_0,\ L_k=\ell,\ D_k=d\bigr)
  =\varrho\,\mathcal K\,V(x_0,i)\,\widehat V(\mathrm{swap}\,y_0,d)\,
   k^{-4}\,\frac{e^{-(\ell-k\E R)^{2}/2\sigma^{2}k}}{\sqrt{2\pi\sigma^{2}k}}
   \bigl(1+o(1)\bigr)
\]
on the admissible sublattice, and $0$ off it.
\end{theorem}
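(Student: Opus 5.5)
We plan to prove Theorem~\ref{thm:llt} by reducing it to the Denisov--Wachtel local limit theorem for random walks in cones, applied to a Markov-additive (Markov-modulated) walk. The first step is to encode the process as a Markov additive process on $\mathbb{Z}^2\times\{0,1,2\}\times\mathbb{Z}$. The spatial part $S_k$ moves by $R v_{D_k}$. The modulating state $D_k$ is the current direction, updated uniformly on the other two directions. The additive length coordinate $L_k$ accumulates $R$. Tilting $w_c$ to criticality makes the stationary drift vanish: under the uniform stationary law on the directions the mean increment is $\E R\,(v_0+v_1+v_2)/3=0$, since $v_0+v_1+v_2=0$. The length coordinate is left uncentred; its fluctuation is what produces the Gaussian factor.

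The second step is to find the scaling limit. We compute the asymptotic covariance of the spatial increments, including the Markov correlation terms through the fundamental matrix of the three-state chain. A linear change of variables then makes this covariance isotropic, and maps the Weyl chamber $\{x\ge y\ge 0\}$-type region of the tandem walk onto a wedge of opening $\pi/3$. The harmonic exponent of that wedge is $p=\pi/(\pi/3)=3$, and the local excursion exponent $p+d/2=3+1=4$ gives the factor $k^{-4}$. We would check explicitly that the asymptotic cross-covariance between $S$ and $L$ vanishes after centring, by symmetry under cyclic rotation of directions. This is what allows the joint law to factor as a cone kernel in $S$ times an independent one-dimensional Gaussian in $L$ with variance $\sigma^2k$. (If the conditional variance turns out to be $\mathrm{Var}(R)$ only after accounting for this decorrelation, that is the point to verify.)

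The third step constructs the harmonic functions. Following Denisov--Wachtel and the Markov-modulated extension due to Duraj and Wachtel, we define $V(x,i)=\lim_k \E_{x,i}[u(S_k);\tau>k]$, where $u$ is the continuous harmonic function of the wedge pulled back. We expect Lemma B to provide exactly this positivity and harmonicity. The reverse $\widehat V$ comes from the time-reversed chain, where directions reverse and $\mathrm{swap}$ accounts for the reversed step set. The LLT then follows by the standard three-part decomposition: an initial segment of length $\epsilon k$ (governed by $V$), a middle Gaussian local CLT for the Markov additive process on the lattice $\Lat$, and a final segment handled by reversal (governed by $\widehat V$). The factor $\varrho$ arises from the lattice index: the local CLT on a sublattice multiplies the Gaussian density by the covolume. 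The sublattice restriction, together with the zero off it, also follows from aperiodicity analysis on $\Lat$.

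The main obstacle is the middle local CLT, taken jointly in $(S,L)$ with Markov modulation and restricted to the correct sublattice with the correct constant $\varrho$. Arithmetic support of $R$ and the coupling between $L$ and the direction chain can create extra periodicities. Our first attempt would be to use the Nagaev--Guivarc'h perturbation of the twisted transition operator $P(\theta)$ and identify the dual lattice of $\Lat$ as exactly the set of $\theta$ where $P(\theta)$ has a unimodular eigenvalue. Exponential tails of $R$ give the analyticity that the perturbation argument needs. The constant $\mathcal K$ is then collected from the wedge heat-kernel normalisation.
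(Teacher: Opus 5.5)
\paragraph{Review.} Your architecture is the paper's: a Fourier/perturbation local CLT for $(S_k,L_k,D_k)$ whose three unimodular characters give $\varrho=3$ (Lemma A); isotropisation to the $\pi/3$ wedge with $p=3$; a decoupled length coordinate; a Denisov--Wachtel harmonic function; reversal; and a split decomposition (Lemma D). Your cyclic-symmetry argument for the vanishing cross-covariance is correct and cleaner than the paper's symbolic check. Relabelling $D\mapsto D+1 \pmod 3$ preserves the chain and fixes $L$. It acts on positions by the linear map $A$ with $Av_j=v_{j+1}$, which has order $3$ and no nonzero fixed vector.

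The gap is that you treat as standard the one step that is not, and flag as the main obstacle a step that is routine. The paper disposes of the sublattice LCLT in a few lines by entry-modulus domination on the torus. Given $D_k=i$, the next direction is uniform on the other two, so the conditional position increment has mean $\tfrac{\E R}{2}\sum_{j\ne i}v_j=-\tfrac{\E R}{2}v_i\ne0$ and an $i$-dependent covariance. The position alone is therefore neither Markov nor a martingale. Consequently $f(x,i)=\E_{x,i}[u(S_1)]-u(x)=-\tfrac{\E R}{2}\nabla u(x)\cdot v_i+O(|x|^{p-2})$ is of order $|x|^{p-1}$ in the bulk. Since on $\{\tau>k\}$ one has $|S_k|\asymp\sqrt k$ and $\Pr(\tau>k)\asymp k^{-3/2}$, the summands of $\sum_k\E_{x,i}[|f(Z_k)|;\tau>k]$ are of order $k^{-1/2}$. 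The summability on which the Denisov--Wachtel proof that $\lim_k\E_{x,i}[u(S_k);\tau>k]$ exists rests therefore fails. Even the $i$-dependent second-order terms alone contribute $\sum_k k^{-1}$. So ``following Denisov--Wachtel'' yields neither $V$ nor what your ``initial segment governed by $V$'' consumes: the exit tail $\Pr_{x,i}(\tau>m)\sim\varkappa V(x,i)m^{-3/2}$ and the meander limit. Deferring to Lemma B, or to a modulated extension whose hypotheses you do not check against this drift, leaves the theorem's substantive content unproved; this is exactly where the paper does its work (B.0--B.3).

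The missing idea is a Poisson corrector on the pair chain $Z_k=(S_k,D_k)$. First solve $(P-I)c=-\mathrm{drift}$, giving $c(i)=-\tfrac{\E R}{3}v_i$, so that $M_k=S_k+c(D_k)$ is an exact martingale within bounded distance of $S_k$. Then solve second-level Poisson equations for the $i$-dependent second-order terms (covariance and drift--corrector cross terms). Finally, run the Denisov--Wachtel/Denisov--Zhang construction on $\Lambda(x,i)=u(x)+\nabla u(x)\cdot c(i)+\tfrac12\langle q(i),\mathrm{Hess}\,u(x)\rangle$. Its one-step defect has the order-$|x|^{p-1}$ and order-$|x|^{p-2}$ terms cancelled, so it fits the budget $\widetilde\varepsilon(d(x))\,|x|^{p-1}d(x)^{-1}\gamma(d(x))$. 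Your definition of $V$ via $u$ is then harmless a posteriori, since $|\Lambda-u|=O(|x|^{p-1})$ and $\E[|S_k|^{p-1};\tau>k]\to0$. The martingale $M_k$ also supplies the FCLT and optional-stopping inputs for the exit tail.

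Three smaller points. Zero stationary drift follows from $v_0+v_1+v_2=0$ for every tilt; criticality is needed only for the exact tilt identity linking $G(n)$ and $H(n)$ to the walk. $\mathrm{Var}(L_k)=k\,\mathrm{Var}(R)$ holds exactly, since runs are i.i.d.\ and independent of the direction chain, so your hedge there is unnecessary. For the reversal you should state that the direction chain is reversible with uniform stationary law, so no Radon--Nikodym factor $\pi(d)/\pi(i)$ enters $\widehat V$.
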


The proof transports the harmonic-function programme of Denisov--Wachtel
\cite{DW} and Denisov--Zhang \cite{DZ} from position-Markov chains to
Markov-additive processes. The transport is not formal: the position process
fails their pointwise assumptions outright, and the repair --- an explicit
two-level Poisson corrector ladder --- is where the modulated structure is
absorbed.

\subsection{Lemma A: free-space local CLT}

Let $P(\theta,s)$ be the $3\times3$ Fourier transfer matrix,
$P(\theta,s)_{ij}=\tfrac12\varphi(\theta\cdot v_j+s)$ for $j\ne i$, with
$\varphi(t)=e^{2it}/(2-e^{it})$. Verified symbolically: the Perron branch
$\lambda$ satisfies $\lambda(0)=1$, $\nabla\lambda(0)=(0,0,3i)$ --- zero
position drift, length drift $\E R=3$ --- and
\[
  -\nabla^{2}\log\lambda(0)=\Sigma_3=
  \begin{pmatrix}10/3&-5/3&0\\-5/3&10/3&0\\0&0&2\end{pmatrix},
\]
so the position block is $5M$, the length variance is $2$, and the
position--length cross-covariance \emph{vanishes}: the length coordinate
decouples at Gaussian order. The increment lattice has index $3$ in
$\mathbb{Z}^{3}$; on the dual torus $|\varphi(t)|<1$ for $t\not\equiv0$ since
$|2-e^{it}|>1$, so by entry-modulus domination the spectral radius is $1$
exactly at the three characters $(\theta,s)=(-s,s,s)$ with $3s\equiv0$
$(\mathrm{mod}\ 2\pi)$; a machine scan gives maximum $0.884$ elsewhere.

Uniformly for $(z,\ell)$ with $(z,\ell-3k)$ in the admissible coset and
$|z|+|\ell-3k|\le A\sqrt k$,
\[
  \Pr_{i}\bigl(S_k=z,\ L_k=\ell,\ D_k=d\bigr)
  =\frac{3\,\pi(d)}{(2\pi k)^{3/2}\sqrt{\det\Sigma_3}}
   e^{-\frac{1}{2k}Q(z,\ell-3k)}+o(k^{-3/2}),
\]
with the sup-bound $\sup_{z,\ell}\Pr_i(S_k=z,L_k=\ell,D_k=d)\le Ck^{-3/2}$.

\begin{proof}
Fourier inversion on the torus,
\[
  \Pr_i(S_k=z,L_k=\ell,D_k=d)=(2\pi)^{-3}\int_{\mathbb{T}^{3}}
  e^{-i(\theta\cdot z+s\ell)}\bigl(P(\theta,s)^{k}\bigr)_{id}\,d\theta\,ds .
\]
Split
$\mathbb{T}^{3}$ into $\delta$-balls around the three unit-modulus characters
$\chi_r=(-s_r,s_r,s_r)$, $s_r=2\pi r/3$, and the complement. On the complement
the spectral radius is $\le1-\epsilon(\delta)$ by entry domination and
compactness, contributing $O((1-\epsilon)^{k})$. Near $\chi_0=0$,
$P^{k}=\lambda^{k}\Pi+O(\rho_2^{k})$ with $\Pi$ the Perron projector,
$\Pi_{id}\to\pi(d)$ and $\rho_2<1$; the expansion
$\log\lambda=3is-\tfrac12Q^{*}(\theta,s)+O(|\cdot|^{3})$, verified
symbolically, and the Laplace argument give the Gaussian term with relative
error $O(k^{-1/2})$ after $(\theta,s)\mapsto(\theta,s)/\sqrt k$. Near $\chi_r$,
$r=1,2$, the character identity
$P(\chi_r+\zeta)=e^{i\psi_r}\mathrm{diag}\,U_r^{-1}P(\zeta)U_r\,\mathrm{diag}$,
from $\chi_r$ being trivial on $\Lat$, reproduces the same local contribution
times $e^{-i\chi_r\cdot(z,\ell)}$, which equals $1$ on the admissible coset;
summing the three gives the factor $3=[\mathbb{Z}^{3}:\Lat]$ and cancellation
off the coset. The sup-bound follows from the same inversion with absolute
values.
\end{proof}

\subsection{Lemma B: the corrector ladder and the harmonic function}

Denisov and Zhang require the position itself Markov with pointwise martingale
structure. Our position fails pointwise --- conditional drift $-\tfrac32v_i$,
state-dependent covariance --- but by \emph{bounded} state functions. The
corrector ladder, all verified symbolically, is
\[
  c(i)=-v_i,\qquad q(i)=\tfrac83\bigl(\tfrac W3-v_iv_i^{\top}\bigr),
  \qquad r(i)=-\tfrac23v_i,
\]
solving $(P-I)c=-\mathrm{drift}$, $(P-I)q=-\widehat C$,
$(P-I)r=-\widehat{\mathrm{cross}}$. Then $M_k=S_k+c(D_k)$ is an exact
martingale, with correction $\le\sqrt2$, and the Poisson-corrected quadratic
$|M_k|^{2}-k\,\mathrm{tr}\bar C-\mathrm{tr}\,q(D_k)$ is a martingale up to a
bounded telescoping term. After isotropisation by $(5M)^{-1/2}$ the cone
becomes the wedge of angle $\pi/3$, so $p=3$, and the convex $C^{\infty}$
geometry assumption holds.

\begin{lemma}[B.0]
$M_k=X_k+c(D_k)$ is a martingale, and so is
$N_k:=|M_k|^{2}-k\,\mathrm{tr}\bar C-\mathrm{tr}\,q(D_k)$.
\end{lemma}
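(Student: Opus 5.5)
We check both martingale properties by conditioning on $\mathcal F_k$, which contains $(X_k,D_k)$. Given $D_k=i$, the next direction $D_{k+1}$ is uniform on the other two states $j\ne i$. The increment is $X_{k+1}-X_k=R\,v_{D_{k+1}}$, with $R$ independent of the direction, and we use the normalisation already fixed in Lemma B. The whole argument rests on the Poisson equations of the corrector ladder, which are taken as verified symbolically: $(P-I)c=-\mathrm{drift}$, $(P-I)q=-\widehat C$, and $(P-I)r=-\widehat{\mathrm{cross}}$. Here $\mathrm{drift}(i)=\E[X_{k+1}-X_k\mid D_k=i]$, and $P$ is the direction chain.

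\emph{First claim.} We compute
\[
\E[M_{k+1}-M_k\mid\mathcal F_k]=\mathrm{drift}(D_k)+(Pc)(D_k)-c(D_k)=\mathrm{drift}(D_k)-\mathrm{drift}(D_k)=0 .
\]
This uses only the Poisson equation for $c$. The concrete check is short. Since $\sum_j v_j=0$, the average of $v_j$ over $j\ne i$ is $-\tfrac12 v_i$. With the paper's run normalisation this gives $\mathrm{drift}(i)=-\tfrac32v_i$, and $Pc(i)-c(i)=\tfrac12(-\sum_{j\neq i}v_j)+v_i=\tfrac32 v_i$. Integrability is immediate, because $R$ has exponential tails and $c$ is bounded.

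\emph{Second claim.} Write $\Delta=M_{k+1}-M_k=Rv_{D_{k+1}}+c(D_{k+1})-c(D_k)$. Then
\[
|M_{k+1}|^2-|M_k|^2=2M_k\cdot\Delta+|\Delta|^2 .
\]
The cross term has zero conditional mean by the first claim. The quadratic term has conditional mean $\mathrm{tr}\,C(D_k)$, where $C(i)=\E[\Delta\Delta^\top\mid D_k=i]$ is a bounded state function. Let $\bar C=\sum_i\pi(i)C(i)$ be its stationary average, and let $\widehat C=C-\bar C$ be the centred part.

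The Poisson corrector $q$ solves $(P-I)q=-\widehat C$, so $\E[\mathrm{tr}\,q(D_{k+1})-\mathrm{tr}\,q(D_k)\mid\mathcal F_k]=-\mathrm{tr}\,\widehat C(D_k)$. Putting the pieces together,
\[
\E[N_{k+1}-N_k\mid\mathcal F_k]=\mathrm{tr}\,C(D_k)-\mathrm{tr}\,\bar C-\mathrm{tr}\,\widehat C(D_k)=0 .
\]
The sign convention must be chosen so that $-\mathrm{tr}\,q(D_k)$ enters $N_k$ with the sign that cancels $\widehat C$. If the symbolic corrector is normalised oppositely, we flip that sign.

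\emph{Main obstacle.} The delicate point is that $\Delta$ contains $c(D_{k+1})-c(D_k)$. Its covariance with $Rv_{D_{k+1}}$ is exactly the "cross" term that $r$ is designed to absorb. So I would first compute $C(i)$ explicitly, including these cross contributions. If $\mathrm{tr}\,C$ is not yet fully centred by $q$ alone, I would add the $r$-correction, which is a bounded telescoping term. This is the caveat "up to a bounded telescoping term" in the text; under it, $N_k$ is an exact martingale after the additional bounded corrector is included.

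Everything reduces to finite symbolic checks on three states with $\E R$ and $\E R^2$ known from Lemma A. We have length variance $2$ and $\E R=3$; in the position normalisation used there these yield the traces $\mathrm{tr}\,5M=20/3$ per step. Integrability again follows from the exponential tails of $R$.
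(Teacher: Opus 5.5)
Your route is the paper's: the $c$-Poisson equation for $M_k$, increment orthogonality giving $\E[|M_{k+1}|^2-|M_k|^2\mid\mathcal F_k]=\mathrm{tr}\,C(D_k)$, the trace of the $q$-Poisson equation, then subtraction. The gap is the sign, and your hedge is exactly where the proof has to commit. From $(P-I)q=-\widehat C$ you correctly get $\E[\mathrm{tr}\,q(D_{k+1})-\mathrm{tr}\,q(D_k)\mid\mathcal F_k]=-\mathrm{tr}\,\widehat C(D_k)$. But $N_k$ carries $-\mathrm{tr}\,q(D_k)$, so this term contributes $+\mathrm{tr}\,\widehat C(D_k)$ to the drift of $N$. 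The total drift is therefore $2\,\mathrm{tr}\,\widehat C(D_k)$, not $0$; your last display is really the drift of $|M_k|^2-k\,\mathrm{tr}\,\bar C+\mathrm{tr}\,q(D_k)$.

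The model data decide which sign is right. Take $\Delta=(R-1)v_j+v_i$, $\E[R-1]=2$, $\E[(R-1)^2]=6$ and $W=\sum_jv_jv_j^\top$, so $M=\tfrac{W}{3}$. Then $C(i)=3W-4v_iv_i^\top$, $\bar C=\tfrac53W=5M$, and $\widehat C(i)=4(\tfrac{W}{3}-v_iv_i^\top)=\tfrac32\,q(i)$. Since $\sum_i\widehat C(i)=0$, we have $P\widehat C=-\tfrac12\widehat C$, so the paper's explicit $q$ does satisfy $(P-I)q=-\widehat C$. At $D_k=0$ one has $\mathrm{tr}\,C(0)=8$, $\mathrm{tr}\,\bar C=\tfrac{20}{3}$ and $\mathrm{tr}\,\widehat C(0)=\tfrac43$, so $N_k$ as printed has conditional drift $\tfrac83$.

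The paper's own proof reaches ``subtract'' only by asserting $\E[\mathrm{tr}\,q(D_{k+1})\mid i]-\mathrm{tr}\,q(i)=\mathrm{tr}\,C(i)-\mathrm{tr}\,\bar C$, which is the opposite of what its $q$ satisfies. In the unscaled coordinates you work in, the statement to prove is therefore the corrected one with $+\mathrm{tr}\,q(D_k)$ (equivalently, $q$ replaced by $-q$), and you should say so rather than ``flip if needed''. After isotropisation the issue disappears: $v_i^\top M^{-1}v_i=2$ for every $i$, so $\mathrm{tr}\bigl((5M)^{-1}\widehat C(i)\bigr)=0$ and the $q$-term vanishes identically.

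Your ``main obstacle'' paragraph should be removed. The covariance between $c(D_{k+1})-c(D_k)$ and $Rv_{D_{k+1}}$ is already part of $C(i)=\E[\Delta\Delta^\top\mid D_k=i]$, which was computed above with all cross terms included. Because $C(i)$ depends on $D_k$ alone, $q$ centres $\mathrm{tr}\,C$ completely, and the corrected $N_k$ is an exact martingale with no remainder.

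The corrector $r(i)=-\tfrac23v_i$ solves a different Poisson equation, for the position--length cross-covariance $\E[(R-3)\Delta\mid D_k=i]=-v_i$. That quantity never enters $|M_k|^2$, so adding $r$ to $N_k$ would break the martingale property rather than repair it. The only ``bounded telescoping'' discrepancy in sight comes from the printed sign: $|M_k|^2-k\,\mathrm{tr}\,\bar C-\mathrm{tr}\,q(D_k)$ equals the true martingale minus $2\,\mathrm{tr}\,q(D_k)$.
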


\begin{proof}
The first is the $c$-Poisson equation. For the second,
$\E[|M_{k+1}|^{2}-|M_k|^{2}\mid Z_k=(x,i)]=\mathrm{tr}\,C(i)$ by martingale
increment orthogonality, while the trace of the $q$-Poisson equation gives
$\E[\mathrm{tr}\,q(D_{k+1})\mid i]-\mathrm{tr}\,q(i)
=\mathrm{tr}\,C(i)-\mathrm{tr}\bar C$. Subtract.
\end{proof}

The one place the correctors meet the budget is the $f$-estimate. Define, on
the pair chain,
\[
  \Lambda(x,i):=u(x)+\nabla u(x)\cdot c(i)
   +\tfrac12\langle q(i),\mathrm{Hess}\,u(x)\rangle,
  \qquad f(x,i):=\E_{x,i}[\Lambda(X_1,D_1)]-\Lambda(x,i).
\]
Taylor expansion on $\{|\widetilde\Delta|\le d(x)/2\}$ kills the
order-$|x|^{p-1}$ term by the $c$-Poisson equation and the order-$|x|^{p-2}$
term by the $q$-Poisson equation, the stationary part
$\tfrac12\mathrm{tr}(\bar C\,\mathrm{Hess}\,u)$ vanishing by harmonicity after
isotropisation; the remainder is $\le C|x|^{p-3}\E R^{3}$ by the
third-derivative bound $|\partial^{3}u|\le Cu/d^{3}$. The tail contributes
$\le C|x|^{p-1}e^{-c\,d(x)}$ by the geometric majorant. Both sit inside the
budget $\beta(x)=|x|^{p-1}d(x)^{-1}\gamma(d(x))$, since
$|x|^{p-3}/\beta(x)=d(x)/(|x|^{2}\gamma)\le1/(d(x)\gamma(d(x)))\downarrow0$.
Hence $|f(x,i)|\le\widetilde\varepsilon(d(x))\beta(x)$.

\begin{lemma}[B.1, superharmonic envelopes]
There exist $W^{\pm}(x,i)$, superharmonic respectively
submartingale-generating for the killed pair chain on $\{d(x)\ge R\}$, with
$W^{\pm}=u(x)(1+o(1))$ as $d(x)\to\infty$ and $W^{-}\le\Lambda\le W^{+}$.
\end{lemma}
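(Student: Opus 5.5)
The plan is the standard Denisov--Wachtel envelope construction, transported to the pair chain $(X_k,D_k)$ by the corrector $\Lambda$. Let $u$ be the positive harmonic function of the isotropised wedge of angle $\pi/3$, homogeneous of degree $p=3$ and vanishing on its boundary. Fix a small parameter $\epsilon>0$, write $d(x)$ for the distance to the boundary, and look for envelopes of the form
\[
  W^{\pm}(x,i)=\Lambda(x,i)\pm h(x),\qquad h(x)=|x|^{p-1}\,d(x)^{1-\epsilon}\quad\text{(or a smoothed version)} .
\]
The auxiliary function $h$ is chosen for two properties. First, $h=o(u)$ as $d(x)\to\infty$ inside the cone, because $u\asymp |x|^{p-1}d(x)$ there. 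Second, $h$ is strictly superharmonic with a quantitative margin: on $\{d\ge R\}$, $\Delta h\le -c\,|x|^{p-1}d(x)^{-1-\epsilon}$. This margin comes from concavity of $t\mapsto t^{1-\epsilon}$ combined with harmonicity of $|x|^{p-1}$-type angular factors, and if needed one composes with the angular eigenfunction to make the cross terms controllable.

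With this $h$, the estimate already obtained for $f$ does most of the work. We have $\E_{x,i}[\Lambda(X_1,D_1)]-\Lambda(x,i)=f(x,i)$ with $|f|\le\widetilde\varepsilon(d)\beta(x)$. Choosing $\gamma(t)=t^{\epsilon/2}$ in the budget gives $\beta$ smaller than the superharmonic margin of $h$ for $d\ge R$ large. So the drift of $W^{+}$ is at most $f-c|x|^{p-1}d^{-1-\epsilon}<0$, which makes $W^{+}$ superharmonic; symmetrically $W^{-}$ generates a submartingale.

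The one genuinely new step is that $h$ is evaluated along the non-Markov position. I would therefore Taylor-expand $h$ exactly as was done for $u$, adding corrector terms $\nabla h\cdot c(i)+\tfrac12\langle q(i),\mathrm{Hess}\,h\rangle$. Since these are of lower order, $h$ may absorb them into a modified $\tilde h$ with $\tilde h=h(1+o(1))$, which does not affect the ordering. The increments have exponential tails through the geometric majorant on $R$, so jumps past $d(x)/2$ cost only $e^{-cd}$ and fit inside the margin.

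Three conclusions then follow. From $\Lambda=u+O(|x|^{p-1})$ (the correctors are bounded) and $h=o(u)$ we get $W^{\pm}=u(1+o(1))$. The inequalities $W^{-}\le\Lambda\le W^{+}$ hold trivially after replacing $h$ by $\max(h,0)$-type smoothing, and near the level $d=R$ one truncates by $\max(W^{-},0)$, which keeps the submartingale property there.

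The main obstacle is showing that the quantitative superharmonicity margin of $h$ dominates both $f$ and the corrector-induced errors of $h$ itself, uniformly near the boundary layer $d\approx R$ and near the apex. The difficulty is that the two scales $|x|$ and $d$ interact there. I would first try the explicit choice $h=u^{1-\epsilon}|x|^{\epsilon(p-1)}$-type used by Denisov--Wachtel, and verify the sign of its Laplacian on the $\pi/3$ wedge directly, since that is a planar computation.
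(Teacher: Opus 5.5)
Your construction has the same shape as the paper's, $W^{\pm}=\Lambda\pm U$ with $U\ge0$ and $U=o(u)$. The paper, however, takes $U=G_R(\widetilde\varepsilon\beta)$, the Green potential of cone-killed Brownian motion. That potential solves the Poisson equation with the budget itself as source, so no sign has to be checked, and the only input is $G_R(\widetilde\varepsilon\beta)=o(u)$.

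\textbf{The gap.} An explicit barrier must be checked by hand, and neither of your candidates passes. Work in the isotropised wedge, with polar coordinates $0<\varphi<\pi/3$ and $u=r^{3}\sin3\varphi$. For $\varphi<\pi/6$ your $h=r^{2}d^{1-\epsilon}=r^{3-\epsilon}\sin^{1-\epsilon}\varphi$ has
\[
  \Delta h=r^{1-\epsilon}\sin^{-1-\epsilon}\varphi\,\bigl[(8-5\epsilon+\epsilon^{2})\sin^{2}\varphi-\epsilon(1-\epsilon)\cos^{2}\varphi\bigr].
\]
This is positive once $d(x)\gtrsim\sqrt{\epsilon/8}\,|x|$. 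The terms coming from $r^{2}$, which is not harmonic, beat the concavity term by a factor of order $1/\epsilon$. So $h$ is subharmonic on the bulk of the wedge, which is exactly where the budget, of size $|x|^{p-2}\gamma(|x|)$, must be absorbed.

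Your fallback $u^{1-\epsilon}|x|^{\epsilon(p-1)}=r^{3-\epsilon}\sin^{1-\epsilon}3\varphi$ has Laplacian
\[
  \epsilon\,r^{1-\epsilon}\sin^{-1-\epsilon}(3\varphi)\,\bigl[(3+\epsilon)\sin^{2}3\varphi-9(1-\epsilon)\cos^{2}3\varphi\bigr],
\]
which equals $+\epsilon(3+\epsilon)r^{1-\epsilon}$ on the bisector. So the obstruction is not near $d\approx R$ or the apex, where you looked, but in the interior.

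There is a second, smaller error: $\gamma(t)=t^{\epsilon/2}$ points the wrong way. It makes $\beta$ exceed your margin $|x|^{p-1}d^{-1-\epsilon}$ by a factor $d^{3\epsilon/2}$. You need $\gamma(t)=o(t^{-\epsilon})$, say $t^{-2\epsilon}$. The $f$-estimate tolerates this, since it only requires $d\,\gamma(d)\to\infty$.

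\textbf{A repair, and what it buys.} Your route can be fixed by adding a bulk barrier. Since $\Delta(u|x|^{-\epsilon})=-\epsilon(6-\epsilon)r^{1-\epsilon}\sin3\varphi$, for $K$ large the function $h=u^{1-\epsilon}|x|^{2\epsilon}+K\,u\,|x|^{-\epsilon}$ has the following properties:
\begin{itemize}
\item it satisfies $\Delta h\le-c\,\epsilon\,|x|^{p-1}d^{-1-\epsilon}$ uniformly on the wedge;
\item it is $o(u)$ as $d\to\infty$;
\item its third derivatives are $O(|x|^{p-1}d^{-2-\epsilon})$;
\item with $\gamma(t)=t^{-2\epsilon}$, it dominates $\widetilde\varepsilon\beta$ on $\{d\ge R\}$.
\end{itemize}
With this $h$ your argument becomes a fully explicit alternative to the paper's proof. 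It is special to the $\pi/3$ wedge, where $u$ is a cubic polynomial, but it does not depend on the Green-function estimates imported from Denisov--Zhang.

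Your insistence on correctors for the barrier term is also well placed. The conditional drift $-\tfrac32v_i$ produces a first-order term $-\tfrac32\nabla U\cdot v_i$, generically of order $|\nabla U|$ and not within the budget near the boundary. This happens for the paper's $U=G_R(\widetilde\varepsilon\beta)$ just as for your $h$. The paper's one-line proof folds it into the imported discretisation error without comment, so your explicit treatment is the more careful one.
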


\begin{proof}
Set $W^{\pm}=\Lambda\pm G_R(\widetilde\varepsilon\beta)$ with $G_R$ the Green
potential of the cone-killed Brownian motion, for which
$G_R(\widetilde\varepsilon\beta)=o(u)$ by pure cone geometry. Then
$\E[W^{+}(Z_1)]-W^{+}(x,i)=f(x,i)-\widetilde\varepsilon\beta(x)$ plus a
discrete-versus-continuous potential error; the first two terms are $\le0$ by
the $f$-estimate, and the discretisation error is absorbed as in \cite{DZ}
using the moment bounds of Lemma C1, an argument using only the Markov property
of $Z$.
\end{proof}

\begin{lemma}[B.2, the harmonic function]
$V(x,i):=\lim_{m\to\infty}\E_{x,i}[\Lambda(Z_m);\tau>m]$ exists, is harmonic
for the killed pair chain, strictly positive on $K^{\circ}\times\{0,1,2\}$, and
$V(x,i)=u(x)(1+o(1))$ as $d(x)\to\infty$.
\end{lemma}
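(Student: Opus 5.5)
The plan follows the Denisov--Wachtel construction of the harmonic function, run on the pair chain $Z_k=(X_k,D_k)$ and with $\Lambda$ playing the role that $u$ plays there. Start from the decomposition $\Lambda(Z_m)=\Lambda(Z_0)+\sum_{j<m}f(Z_j)+\mathcal M_m$, where $\mathcal M$ is a martingale. This holds by the definition of $f$ and the Markov property of $Z$. Optional stopping at $\tau\wedge m$ then gives
\[
  \E_{x,i}[\Lambda(Z_m);\tau>m]
  =\Lambda(x,i)-\E_{x,i}[\Lambda(Z_\tau);\tau\le m]
   +\E_{x,i}\Bigl[\textstyle\sum_{j<\tau\wedge m}f(Z_j)\Bigr].
\]
So the limit exists as soon as two quantities are finite: the overshoot term $\E_{x,i}[|\Lambda(Z_\tau)|]$, and the total correction $\E_{x,i}\sum_{j<\tau}|f(Z_j)|$. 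Monotone and dominated convergence then finish the existence claim. Since $\Lambda=u+O(|\nabla u|+|\mathrm{Hess}\,u|)$ with bounded correctors $c,q$, and since the run lengths have exponential tails, the overshoot term is controlled by the moment bounds of Lemma C1, exactly as in \cite{DZ}.

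The main obstacle is the summability of $f$ along the killed path. The $f$-estimate gives $|f|\le\widetilde\varepsilon(d(x))\beta(x)$. I would handle this in two steps. The first is to work in the region $\{d(x)\ge R\}$ and to use the envelopes of Lemma B.1. Since $W^{+}$ is superharmonic, $\E\sum_{j<\tau_R}\widetilde\varepsilon\beta(X_j)\le C\,G_R(\widetilde\varepsilon\beta)(x)=o(u(x))$, where $\tau_R$ is the exit time from that region. The second is to handle the layer $\{d(x)<R\}$ before $\tau$. There I would use the standard bound $\E_{x,i}[\,\#\{j<\tau: d(X_j)<R\}\,]\le C(1+|x|^{p})$, which follows from the sup-bound in Lemma A together with the quadratic martingale $N_k$ of Lemma B.0. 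On that layer $\beta$ is bounded by $C|x|^{p-1}$, but only up to the time $\nu_n$ at which $|X|$ first exceeds $n^{1/2-\epsilon}$. Past $\nu_n$ I would use the exponential bound on $\Pr(\tau>\nu_n)$ that holds when the start is close to the boundary. These pieces together give finiteness, and they also give the bound $V=\Lambda+o(u)$, hence $V=u(1+o(1))$ as $d(x)\to\infty$. Here I use $\Lambda=u(1+O(1/d))$, which follows from the derivative bounds $|\nabla u|\le Cu/d$ and $|\mathrm{Hess}\,u|\le Cu/d^{2}$.

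Harmonicity comes from the Markov property. Write $V_m(x,i)=\E_{x,i}[\Lambda(Z_m);\tau>m]$. Then $\E_{x,i}[V_m(Z_1);\tau>1]=V_{m+1}(x,i)$, and the domination $|V_m|\le C(1+\Lambda^{+}+u)$ established above lets one pass to the limit under the one-step expectation. For strict positivity, first note that $V(x,i)>0$ once $d(x)$ is large, because $V\sim u$ and $u>0$ there. For a general interior point $(x,i)\in K^{\circ}\times\{0,1,2\}$, harmonicity gives $V(x,i)\ge\E_{x,i}[V(Z_m);\tau>m,\,d(X_m)\ge R']$. This is positive provided the pair chain can reach $\{d\ge R'\}$ from $(x,i)$ without leaving the cone, with positive probability. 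That reachability I would check directly: the steps $Rv_j$ with $R$ in the arithmetic support, together with the uniform-on-other-two direction chain, allow explicit inward paths from every admissible state, and the irreducibility implicit in Lemma A gives the same conclusion.
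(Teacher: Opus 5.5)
\paragraph{A different route, with a gap at the walls.}
Your harmonicity and positivity steps match the paper's, but your existence argument takes a different route, and the step that carries it is not established. You use the Denisov--Wachtel optional-stopping identity. That requires \emph{absolute} summability $\E_{x,i}\sum_{j<\tau}|f(Z_j)|<\infty$ along the whole killed path, plus control of the overshoot $\E_{x,i}|\Lambda(Z_\tau)|$. The paper avoids this by using the one-sided envelopes of B.1: the nonnegative supermartingale $W^{+}(Z_{m\wedge\tau})$ bounds $\E[\Lambda(Z_m);\tau>m]$, the limit is squeezed between the $o(u)$ envelopes, and uniform integrability comes from Doob plus Fuk--Nagaev. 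Your bulk estimate via $G_R(\widetilde\varepsilon\beta)=o(u)$ is in line with B.1. The argument breaks in the boundary layer, for four reasons.
\begin{itemize}
\item On $\{d<R\}$ the $f$-estimate only gives $|f(Z_j)|\le C|X_j|^{p-1}$, which is unbounded along the path. So a bound on the expected \emph{number} of layer visits does not control the sum. That count bound is also not a consequence of Lemma A, which is a free-space estimate, nor of $N_k$ without prior control of $\E|M_\tau|^{2}$.
\item Your patch rests on a false estimate. Started near a wall, the chain reaches radius $r$ before exiting with probability polynomially small in $r$ (of order $r^{-p}$), not exponentially small. The exponential bound of Denisov--Wachtel is $\Pr(\nu_n>n^{1-\epsilon},\tau>n^{1-\epsilon})\le e^{-Cn^{\epsilon}}$, with $\nu_n$ the hitting time of $\{d\ge n^{1/2-\epsilon}\}$. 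It says the walk cannot \emph{linger} near $\partial K$, not that it cannot travel far. In your argument $n$ is also not attached to any time horizon.
\item Your split drops the bulk re-entries. After $\tau_R$ the chain can return to $\{d\ge R\}$ before $\tau$. Each return costs $G_R(\widetilde\varepsilon\beta)$ at a re-entry point, which is of order $|X|^{p-1}$, and these costs must be summed.
\item The overshoot has the same defect. $|\Lambda(Z_\tau)|$ is of order $|X_\tau|^{p-1}$ times the last increment. That is a moment at the random time $\tau$, which the fixed-horizon Fuk bound C1 does not supply by itself.
\end{itemize}

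\paragraph{How to repair it.}
This can be fixed within your framework. From every state with $d(x)<R$, the pair chain leaves $K$ within two runs with probability at least some $p_0>0$: a run parallel to the near wall, then a run longer than $R$ toward it. So layer visits and re-entries are geometrically controlled, and their total cost reduces to moments of $|X|$ near time $\tau$. Those moments, and the overshoot, follow from $M_k$, $N_k$ and an a priori bound $\E_{x,i}\tau\le C(1+|x|^{2})$. That bound holds here because $p=3>2$, and $|x|^{2}=o(u(x))$ as $d(x)\to\infty$.

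A simpler fix is also available. After isotropisation, $u=r^{3}\sin3\varphi=\mathrm{Im}\,z^{3}$ is a harmonic polynomial. So the Taylor expansion behind the $f$-estimate is exact on all of $K$, with no truncation at $d(x)/2$ and no tail term. Once the $c$- and $q$-equations remove the $|x|^{2}$ and $|x|$ terms, the remainder $C|x|^{p-3}\E R^{3}$ is a constant. Hence $|f|\le C$ right up to the walls, and $\E_{x,i}\sum_{j<\tau}|f(Z_j)|\le C\,\E_{x,i}\tau$.

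\paragraph{A further point on positivity.}
Your positivity step discards $\{d(X_m)<R'\}$, so it presumes $V\ge0$ there. Unlike the Denisov--Wachtel $u$, $\Lambda$ is negative near a wall for some states: where $u$ vanishes, $\nabla u\cdot c(i)$ is of order $-|x|^{p-1}$ for suitable $i$. So nonnegativity of $V$ needs its own argument. The paper's proof is equally silent on this point.
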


\begin{proof}
On $\{d\ge R\}$, $W^{+}(Z_{m\wedge\tau})$ is a nonnegative supermartingale by
B.1, so $\E[\Lambda(Z_m);\tau>m]$ is bounded; monotonicity up to the $o(u)$
envelopes gives the limit, and $|V-u|=o(u)$. Uniform integrability follows from
$\sup_m\E[|X_{m\wedge\tau}|^{p};\tau>m]<\infty$, itself a consequence of Doob
applied to $W^{+}$ plus the Fuk--Nagaev tail. Harmonicity is the Markov
property under the limit. Positivity: for $d(x)\ge R$, $V\ge\Lambda-o(u)\ge u/2$
for $R$ large; for general $(x,i)\in K^{\circ}$ an explicit route --- an
$r$-run in direction $0$ followed by an $r/2$-run in direction $1$ moves any
point diagonally deep into $K$ while remaining in $K$ --- has positive
probability and reaches $\{d\ge R\}$.
\end{proof}

\begin{lemma}[B.3, exit tail and conditional limit]
$\Pr_{x,i}(\tau>m)\sim\varkappa V(x,i)m^{-3/2}$, and conditionally on
$\{\tau>m\}$, $X_m/\sqrt m\Rightarrow\mu_K$, the $u$-biased Gaussian on $K$.
\end{lemma}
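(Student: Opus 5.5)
We follow the Denisov--Wachtel route for the exit tail and the conditional limit, adapted to the pair chain $Z_k=(X_k,D_k)$. The harmonic function $V$ of Lemma~B.2 is the only new ingredient; everything else comes from coupling with Brownian motion in the isotropised wedge of angle $\pi/3$. In that wedge $p=3$, so the Brownian exit tail is $\Pr(\tau^{\mathrm{bm}}>t)\sim\varkappa_0 u(x)t^{-p/2}=\varkappa_0u(x)t^{-3/2}$.

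\emph{Step 1: reach depth.} Fix $\epsilon>0$ and let $\nu_m=\inf\{k:d(X_k)\ge m^{1/2-\epsilon}\}$. Using the martingale $M_k$ of Lemma~B.0, which has bounded corrector $c$, together with the exponential run tails, we show $\Pr_{x,i}(\nu_m>m^{1-\epsilon},\tau>\nu_m)\le e^{-cm^{\epsilon'}}$. This is the standard ``escape from the boundary layer'' estimate: in each window of length $m^{1-2\epsilon}$ the chain, with positive probability uniformly over starting states, either exits or gains depth $m^{1/2-\epsilon}$. The argument uses the explicit diagonal route from the proof of B.2.

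\emph{Step 2: optional stopping.} We show
\[
\E_{x,i}\bigl[V(Z_{\nu_m});\tau>\nu_m,\nu_m\le m^{1-\epsilon}\bigr]=V(x,i)+o(1)\quad\text{as }m\to\infty .
\]
Harmonicity gives $V(x,i)=\E[V(Z_{\nu_m\wedge m^{1-\epsilon}});\tau>\nu_m\wedge m^{1-\epsilon}]$, and the part on $\{\nu_m>m^{1-\epsilon}\}$ is controlled by Step~1 and the polynomial growth $V\le C(1+|x|^{3})$. By B.2, $V=u(1+o(1))$ at depth $m^{1/2-\epsilon}$, so $V$ may be replaced by $u$.

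\emph{Step 3: strong coupling.} After time $\nu_m$, couple $M_k$ with a Brownian motion via a KMT/Sakhanenko-type martingale approximation for Markov-additive processes, with error $m^{1/2-2\epsilon}$. The bounded correctors $c$ and $q$ make the difference between $X_k$ and $M_k$ irrelevant at this scale. The key point is that the Gaussian increments for the Markov chain with exponential moments admit a Komlós--Major--Tusnády rate, obtained by regenerating at returns to a fixed direction state; each cycle is i.i.d. with exponential tails, so Einmahl's multivariate strong approximation applies. On the coupling event, since the starting point has depth $m^{1/2-\epsilon}\gg m^{1/2-2\epsilon}$, the Brownian estimates for starting points shifted inward and outward by $m^{1/2-2\epsilon}$ sandwich the discrete exit probability:
\[
\Pr_{y,j}(\tau>m-\nu_m)=\varkappa_0u(y)m^{-3/2}(1+o(1))
\]
uniformly for $|y|\le m^{1/2-\epsilon/2}$. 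Larger $|y|$ is handled by Doob's inequality on $W^{+}$ from B.1.

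\emph{Step 4: combine.} Combining Steps~2 and~3 with the strong Markov property at $\nu_m$ gives $\Pr_{x,i}(\tau>m)\sim\varkappa V(x,i)m^{-3/2}$. The conditional limit follows by repeating Step~3 for the event $\{X_m/\sqrt m\in A,\ \tau>m\}$, using the Brownian meander-in-cone law, whose density is proportional to $u(z)e^{-|z|^2/2}$. The result is $\mu_K$ after undoing the isotropisation by $(5M)^{1/2}$.

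The main obstacle is Step~3 for a modulated process. The regeneration times are random, so the coupling must hold at deterministic times despite the random cycle boundaries. The first approach would be to bound the maximal deviation within a cycle by $\log m$ using the exponential tails. This loses only a logarithm against the polynomial margin $m^{\epsilon}$.
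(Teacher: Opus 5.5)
Your argument is the original Denisov--Wachtel strong-approximation scheme, which is a genuinely different route from the paper's. The paper never couples pathwise. It proves only a martingale FCLT for $M$ (B.3.0) and transfers Brownian survival by the mapping theorem for starting points in the bulk $\{d(y)\ge\delta\sqrt m,\ |y|\le A\sqrt m\}$ (B.3.1). It then applies the Markov property at the deterministic time $\varepsilon m$ and identifies the resulting expectation with $\varkappa V(x,i)m^{-3/2}$, using the representation $V=\lim\E[\Lambda(Z_n);\tau>n]$ and the uniform integrability of B.2 (B.3.2). Finally it removes the boundary layer $\{d<\delta\sqrt m\}$ and the far region with Lemmas C2 and C1, before letting $\delta,\varepsilon\downarrow0$ and $A\uparrow\infty$ (B.3.3).

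You instead stop at mesoscopic depth $m^{1/2-\epsilon}$. There $V$ enters by exact optional stopping rather than by a limit plus uniform integrability, and the Brownian asymptotic $\varkappa_0u(y)m^{-3/2}$ is used in the regime $|y|=o(\sqrt m)$, where it is sharp. You therefore need neither C2 nor the triple limit, and $\varkappa$ comes out as the Brownian wedge constant. The price is a pathwise coupling with polynomial rate and super-polynomially small failure probability, which the paper avoids. Your regeneration construction does supply one: i.i.d.\ cycles between returns of $D$ to a fixed state, zero-mean cycle increments with exponential moments, and Einmahl's theorem.

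Several steps need repair, and one fails as written.

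First, Step~1. The estimate that Steps~2 and~4 actually consume is for the event $\{\nu_m>m^{1-\epsilon},\ \tau>m^{1-\epsilon}\}$, not $\{\nu_m>m^{1-\epsilon},\ \tau>\nu_m\}$. Also, the uniform positive probability of gaining depth $m^{1/2-\epsilon}$ in a window cannot come from the explicit route of B.2. A run of length of order $m^{1/2-\epsilon}$, or any concatenation of bounded routes of that total extent, has exponentially small probability. The lower bound must come from the CLT for window increments, uniform over the three direction states, pushed along an interior direction and combined with superadditivity of $d$ on the convex cone.

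Second, Step~3. The random cycle boundaries cost more than a logarithm. Let $N(k)$ be the number of completed cycles and $\mu$ the mean cycle length. Then $\sup_{k\le m}|N(k)-k/\mu|$ is of order $\sqrt m$, so replacing $B(N(k))$ by the genuine Brownian motion $B(k/\mu)$ costs order $m^{1/4}$ up to logarithms, by L\'evy's modulus. This still fits inside your budget $m^{1/2-2\epsilon}$ once $\epsilon<1/8$. So the step survives, but for that reason rather than the one you give.

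Third, the large-$|y|$ region, which is where the argument fails as written. Doob's inequality for $W^{+}$ bounds the probability of reaching $\{u\ge\lambda\}$ before $\tau$ by $W^{+}(x,i)/\lambda$. On $\{|y|>m^{1/2-\epsilon/2},\ d(y)\ge m^{1/2-\epsilon}\}$ the $\pi/3$ wedge gives only $u(y)\asymp|y|^{2}d(y)\gtrsim m^{3/2-2\epsilon}$. The bound is therefore $O(m^{-3/2+2\epsilon})$, not $o(m^{-3/2})$. Your threshold also sits exactly at the typical displacement $m^{(1-\epsilon)/2}$ over $m^{1-\epsilon}$ steps.

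Use the threshold $m^{1/2-\epsilon/4}$ instead, and the argument closes:
- Lemma C1 makes $\Pr(\max_{k\le m^{1-\epsilon}}|X_k|>m^{1/2-\epsilon/4})$ super-polynomially small.
- Cauchy--Schwarz, with the cubic growth of $u$, then removes that part of $\E[u(X_{\nu_m});\dots]$.
- The Brownian asymptotic remains uniform, because $|y|/\sqrt m\to0$.
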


The proof follows the Denisov--Wachtel/Denisov--Zhang scheme. We give it in
four steps, and then, in \S\ref{sec:replay}, itemise precisely which arguments
of \cite{DZ} are imported and what each import requires of the modulated
setting.

\emph{B.3.0 (functional CLT).} $\bigl(M_{\lfloor mt\rfloor}/\sqrt m\bigr)_{t\ge0}
\Rightarrow\bar C^{1/2}B$. The increments of $M$ are bounded-plus-geometric,
square-integrable, and stationary under the uniformly ergodic direction chain,
with conditional covariances averaging to $\bar C$; the martingale FCLT
applies. The correction $|X-M|\le\sqrt2$ is uniformly negligible, so the same
holds for $X$. After isotropisation $\bar C=I$.

\emph{B.3.1 (Brownian transfer at scale $\sqrt m$).} For fixed
$0<\delta<A$, uniformly in $y$ with $\delta\le|y|/\sqrt m\le A$ and
$d(y)\ge\delta\sqrt m$, and in the state $b$,
\[
  \Pr_{y,b}(\tau>m)\longrightarrow\Pr^{bm}_{y/\sqrt m}(\tau^{bm}>1),
\]
by B.3.0 and the mapping theorem: the exit functional is Wiener-almost-surely
continuous because a Brownian path from an interior point that remains in
$\bar K$ on $[0,1]$ but touches $\partial K$ has probability zero, the wedge
being convex. The limit is the classical wedge survival probability, with
profile $\asymp u(y/\sqrt m)e^{-c|y|^{2}/m}$.

\emph{B.3.2 (the harmonic-measure limit).} For fixed $\varepsilon$,
\[
  m^{3/2}\,\E_{x,i}\Bigl[\Pr^{bm}_{X_{\varepsilon m}/\sqrt m}
   (\tau^{bm}>1-\varepsilon);\ \tau>\varepsilon m\Bigr]
  \longrightarrow\varkappa\,V(x,i)\,(1+o_\varepsilon(1)).
\]
The Brownian factor equals
$\varkappa_1(1-\varepsilon)^{-3/2}u(z)e^{-|z|^{2}/2(1-\varepsilon)}(1+o(1))$
evaluated at $z=X_{\varepsilon m}/\sqrt m$ for $z$ in the bulk, by classical
wedge asymptotics, so the left side equals
$\varkappa_1\varepsilon^{-3/2}(\varepsilon m)^{3/2}
\E_{x,i}[\widetilde u(X_{\varepsilon m});\tau>\varepsilon m]$ for a bounded
continuous $u$-equivalent $\widetilde u$; by B.2 --- uniform integrability of
$u(X_{m\wedge\tau})$, the identification $V=\lim\E[\Lambda;\tau>\cdot]$, and
$|\Lambda-u|=o(u)$ --- this converges to $\varkappa V(x,i)$, with an
$o_\varepsilon(1)$ from the Gaussian factor.

\emph{B.3.3 (assembly).} Apply the Markov property at $\varepsilon m$:
\[
  \Pr_{x,i}(\tau>m)=\E_{x,i}\bigl[\Pr_{X_{\varepsilon m},D_{\varepsilon m}}
   (\tau>(1-\varepsilon)m);\ \tau>\varepsilon m\bigr].
\]
Split the position at time $\varepsilon m$. The bulk gives
B.3.1$\times$B.3.2 $=\varkappa V(x,i)m^{-3/2}(1+o_\varepsilon(1))$. The
near-boundary layer $\{d<\delta\sqrt m\}$ contributes
$o_\delta(1)\cdot Vm^{-3/2}$, by the linear vanishing of the $u$-profile
together with Lemma C2. The far region $\{|y|>A\sqrt m\}$ is killed by
Lemma C1. Letting $m\to\infty$, then $\delta\downarrow0$, $A\uparrow\infty$,
$\varepsilon\downarrow0$, and noting that the constants are consistent across
$\varepsilon$ by the semigroup property, the limit $\varkappa$ is well defined.
The conditional limit follows from the same decomposition applied to
$\E_{x,i}[f(X_m/\sqrt m);\tau>m]$ for bounded continuous $f$, using the
Brownian meander profile of B.3.1 and dominated convergence via Lemma C2.

Monte Carlo certification over $2$M and $4$M paths gives tail slope $-1.4895$
against the predicted $-1.5$. The empirical exit law concentrates on a wedge
of width approaching $\pi/3$ with angular profile $\sin3\varphi$; the width
estimate converges slowly from below as the path count grows (the smaller
archived reproduction \texttt{mc\_exit.py} measures $1.027$ against
$\pi/3=1.0472$), so we treat the Monte Carlo as corroborating the exponent
and the geometry rather than certifying digits.

\subsection{The transported arguments, itemised}\label{sec:replay}

The scheme of \cite{DW,DZ} is written for chains whose \emph{position} is
Markov and satisfies pointwise martingale hypotheses. Our position does not:
its conditional drift is $-\tfrac32v_i$ and its conditional covariance is
state-dependent. Both defects are by \emph{bounded} functions of the modulating
state, which is what the corrector ladder exploits. Since the distinction
between what is imported and what is re-proved is the substance of the
transport, we set it out argument by argument. Throughout, $Z_k=(X_k,D_k)$ is
the pair chain, which \emph{is} Markov.

\paragraph{(i) Brownian estimates: imported verbatim.}
The derivative bounds for the cone-harmonic function $u$ and the Green-function
bounds \cite[Lemmas 7--9]{DZ} concern the wedge $K$ and the Brownian motion on
it. They involve no chain. After isotropisation by $(5M)^{-1/2}$ our cone is
the wedge of angle $\pi/3$ with $p=3$, convex and $C^{\infty}$, so these
estimates apply unchanged. What is consumed: only the geometry.

\paragraph{(ii) The $f$-estimate: re-proved.}
This is \cite[Lemma 10]{DZ} and the single place where the modulated structure
must be absorbed. It is proved above, on the corrected Lyapunov function
$\Lambda$ rather than on $u$: the order-$|x|^{p-1}$ Taylor term is killed by
the $c$-Poisson equation, the order-$|x|^{p-2}$ term by the $q$-Poisson
equation, and the remainder is bounded by $C|x|^{p-3}\E R^{3}$ using the
third-derivative bound of (i). The conclusion,
$|f(x,i)|\le\widetilde\varepsilon(d(x))\beta(x)$ with
$\beta(x)=|x|^{p-1}d(x)^{-1}\gamma(d(x))$, is verbatim that of \cite{DZ}; only
its proof differs. What is consumed: nothing beyond (i) --- this step is
self-contained given the ladder.

\paragraph{(iii) Superharmonic construction: imported, with $\Lambda$ for $u$.}
\cite[Lemmas 11--14]{DZ} construct $W^{\pm}$, establish the existence of the
harmonic function as a limit, and prove positivity and the asymptotic
$V\sim u$. These arguments operate on $u$, the auxiliary functions $U_\beta$
and $G$, and the $f$-estimate; they do not otherwise use the chain's structure
beyond the Markov property. Running them on the pair chain with $\Lambda$ in
place of $u$ is therefore legitimate, and since $|\Lambda-u|\le C|x|^{p-1}=o(u)$
the asymptotics are preserved. $V$ acquires a state argument through $c$ and
$q$. What is consumed: the Markov property of $Z$, the $f$-estimate of (ii),
and the moment bounds of Lemma C1. \emph{The potential absorption inside the
proof of \cite[Lemma 12]{DZ} --- the estimate of the discrete-versus-continuous
Green potential error --- is imported rather than rewritten; it uses only the
Markov property and the majorant condition, both available here.}

\paragraph{(iv) Optional-stopping and tail arguments: imported.}
\cite[\S4--6]{DZ} derive the exit tail and the conditional limit. Their
optional-stopping steps require exact martingales; ours are supplied by B.0,
with bounded telescoping terms absorbed into the constants. Their functional
CLT input is replaced by the martingale FCLT for finitely-modulated
Markov-additive processes (B.3.0). Their Fuk--Nagaev input is Lemma C1. What is
consumed: B.0, B.3.0, C1, and the truncation argument of
\cite[Prop.~19]{DZ}, the last imported directly.

\paragraph{Status.} With (i)--(iv) and B.0--B.3 the transported form of
\cite[Theorem 6]{DZ} is complete, and with it Lemma B. We record explicitly
that three components --- the potential absorption within (iii), portions of
the replay within B.3.3, and the truncation import within (iv) --- are
citations rather than rewritten pages. Every model-specific hypothesis those
arguments require is verified in \cite{repoKZ}: the Perron drift and covariance
structure, the torus aperiodicity scan, the corrector equations, and the
majorant condition.

\subsection{Uniform local bounds and the excursion theorem}

Lemma C1 is Fuk's martingale inequality applied coordinatewise to
$(M_k,L_k-3k)$, the corrected chain having increments
$\widetilde\Delta=(R-1)v_j+v_i$ with $|\widetilde\Delta|\le\sqrt2R$, so the
majorant $Y=\sqrt2R$ is geometric with all moments finite. Lemma C2 is the
uniform local upper bound, obtained by splitting at $\lfloor m/2\rfloor$ and
combining the exit-tail bound with the sup-form of Lemma A.

Lemma D assembles these. The reversed increments $\{-v_i\}$ form the transpose
tandem, so $\widehat{\text{walk}}=\text{walk}\circ\mathrm{swap}$ and
$\widehat V=V\circ\mathrm{swap}$; the direction chain is reversible with
uniform stationary law, so all reversal Radon--Nikodym factors are $1$. The
exact split-and-reverse identity
\begin{align*}
  &\Pr_{x_0,i}(\tau>2m,\,S_{2m}=y_0,\,D_{2m}=d)\\
  &\qquad=\sum_{w,c}\Pr_{x_0,i}(\tau>m,\,S_m=w,\,D_m=c)\,
   \widehat\Pr_{y_0,d}(\widehat\tau>m,\,\widehat S_m=w,\,\widehat D_m=c)
\end{align*}
combines with the bulk-local theorem --- proved by splitting at
$m-\lfloor\varepsilon m\rfloor$, using Lemma A on the final window, bounding
near-boundary starts by $O(\eta/\varepsilon)$ with $\eta=\varepsilon^{2/3}$,
and bounding re-entry by $O(e^{-c\varepsilon^{-1/3}})$ --- to give
Theorem~\ref{thm:llt}.

\subsection{The exact tilt identity and the constants}

By the lattice-word bijection, $G(n)$ counts ballot words on $\{0,1,2\}$ with
$n$ letters each in which every maximal run has length $\ge2$; reading the word
as tandem steps turns these into quarter-plane excursions of length $3n$ with
the same run restriction. Within a run the position moves monotonically along a
fixed $v_d$, and the decreasing coordinate attains its minimum at the run's
end, so a path lies in $K$ at every unit step if and only if it lies in $K$ at
every run boundary: the unit-step cone constraint is exactly the MAP cone
constraint.

\begin{proposition}[Criticality as $k$-independence]
Fix a run law $\Pr(R=r)=w^{r}/W(w)$ on support $S$ with $W(w)=\sum_{r\in S}w^{r}$.
An excursion with $k$ runs of lengths $r_1,\dots,r_k$ has MAP-probability
\[
  \Bigl(\tfrac12\Bigr)^{k-1}\prod_i\frac{w^{r_i}}{W(w)}
  =2\,w^{3n}\Bigl(\frac{1}{2W(w)}\Bigr)^{k}.
\]
At the critical point $2W(w_c)=1$ the bracket is $1$ and the weight is
$2w_c^{3n}=2\mu^{-n}$, independent of $k$ and of the run profile.
\end{proposition}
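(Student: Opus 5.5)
The computation is a direct product, and I would organise it by counting the two kinds of factors separately. An excursion with $k$ runs is a sequence of $k$ maximal runs with directions $d_1,\dots,d_k$ and lengths $r_1,\dots,r_k$. Under the MAP, the first direction is supplied by the initial state $D_0=i$, which is conditioned on rather than randomised. Each later direction is drawn uniformly from the two directions other than the current one. This gives $k-1$ factors of $\tfrac12$. Requiring $d_{j+1}\ne d_j$ is exactly what makes the runs maximal, so the run decomposition of a word in the lattice-word bijection matches the MAP path one-to-one. Each run length is drawn independently from $\Pr(R=r)=w^{r}/W(w)$, so the probability of the specified excursion is $(\tfrac12)^{k-1}\prod_i w^{r_i}/W(w)$. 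This is the first expression in the statement.

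Next I would use that an excursion of $[n,n,n]$ type has total length $\sum_i r_i=3n$, since each letter appears $n$ times and run lengths add up to the word length. Hence $\prod_i w^{r_i}=w^{3n}$. Rewriting $(\tfrac12)^{k-1}W(w)^{-k}=2\,(2W(w))^{-k}$ gives the second expression, $2w^{3n}(2W(w))^{-k}$.

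At criticality $2W(w_c)=1$ the bracket equals $1$. The weight is then $2w_c^{3n}$, which depends only on $n$, not on $k$ or on the individual $r_i$. It remains to identify $w_c^{3}=\mu^{-1}$. I would check this from the definition of $\mu$ as the growth rate. In the $G$ case the support is $\{2,3,\dots\}$, so $W(w)=w^{2}/(1-w)$, and $2W=1$ gives $2w^{2}+w-1=0$, i.e.\ $w_c=\tfrac12$ and $w_c^{-3}=8$. In the $H$ case the support is the odd integers, so $W(w)=w/(1-w^{2})$, and $2W=1$ gives $w^{2}+2w-1=0$, i.e.\ $w_c=\sqrt2-1$ and $w_c^{-3}=(\sqrt2+1)^{3}=7+5\sqrt2$. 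Both agree with the exponential rates in Conjectures 2a and 2b. This is consistent with defining $\mu:=w_c^{-3}$.

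There is no real obstacle. The only point needing care is the bookkeeping of the initial direction. The factor $(\tfrac12)^{k-1}$ rather than $(\tfrac12)^{k}$ presumes the first run's direction is fixed by the conditioning state $i$ (which is the ballot-forced letter $0$). Otherwise an extra factor $\tfrac13$ would appear, which would still be independent of $k$. I would state this convention explicitly. I would also note that maximality of runs makes the correspondence between excursions and MAP trajectories bijective, so the weight formula can be summed directly to give $G(n)=\tfrac12\mu^{n}\cdot\Pr(\text{excursion of length }3n)$.
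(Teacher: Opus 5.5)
Your proposal is correct and follows the paper's own argument. The paper gives no separate proof: the proposition is the same direct product computation ($k-1$ uniform direction choices, independent run lengths, $\sum_i r_i=3n$, and the rearrangement $(\tfrac12)^{k-1}W^{-k}=2(2W)^{-k}$), followed by the same explicit solution of $2W(w_c)=1$, which gives $w_c=\tfrac12,\ \mu=8$ and $w_c=\sqrt2-1,\ \mu=7+5\sqrt2$. Your remark that the first run's direction is the ballot-forced letter $0$ rather than a randomised choice makes explicit a convention the paper uses silently; it is what yields the prefactor in $G(n)=\tfrac{8^{n}}{2}\sum_k\Pr_0(\cdots)$ and the apex regularisation $V(\mathbf 0)=\sum_r\Pr(R=r)V((r,0),0)$.
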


For the $\ge2$ law, $W=w^{2}/(1-w)$, $w_c=\tfrac12$, $\mu=8$; for the odd law,
$W=w/(1-w^{2})$, $w_c=\sqrt2-1$, $\mu=(1+\sqrt2)^{3}=7+5\sqrt2$. Hence exactly
\[
  G(n)=\tfrac{8^{n}}{2}\sum_k\Pr_0\bigl(\tau>k,\ S_k=0,\ L_k=3n\bigr),
\]
and identically for $H(n)$.

Applying Theorem~\ref{thm:llt} to each term, with apex configurations
regularised by one-step harmonicity
$V(\mathbf0):=\sum_r\Pr(R=r)V((r,0),0)<\infty$, and using the reversal-swap
identity to give $\widehat V(\mathrm{swap}\,\mathbf0)=V(\mathbf0)$, the length
Gaussian sums over $k$ to contribute $1/\rho$ with $\rho=\E R$. Altogether
\begin{equation}\label{eq:Csquare}
  C=\frac{\varrho\,\mathcal K}{2\rho}\,V(\mathbf0)^{2}:
\end{equation}
the excursion constant is, up to a universal factor, the \emph{square} of the
harmonic function's apex value.

For any run law with exponential tails under the critical tilt, the
lag-correlation collapse gives $\Sigma=(\E R^{2}-\tfrac23(\E R)^{2})M$, a
scalar multiple of $M$, so the isotropised cone is always the $\pi/3$ wedge,
$p=3$, and the excursion exponent is $-4$. For the odd-run model this is
machine-certified at $w_c=\sqrt2-1$: $\E R=\sqrt2$, $\E R^{2}=3$,
$\Sigma_H=\tfrac53M$ with the Perron computation cross-checked, corrector
$c_H(i)=-\tfrac{\sqrt2}{3}v_i$, growth $\mu_H=7+5\sqrt2$. Theorem~\ref{thm:kz}
follows for both conjectures simultaneously. Numerically, from independent
dynamic programs validated against brute force and calibrated against the known
$C_1$ with the same estimator and $n$-range, error $\lesssim4\times10^{-7}$:
\[
  C_1=0.52128605909(2),\qquad C_2=0.6389278129(4),
\]
both as established in \S\ref{sec:c1num} below.
No closed form is known for either; PSLQ searches over natural constant bases
have found nothing.

\subsection{Numerical determination of the constants}\label{sec:c1num}

Kauers and Zeilberger \cite{KZ} estimate $C_1$ ``close to $0.521286$'' and
$C_2$ ``close to $0.63892$'', to five and six significant digits
respectively. Since \eqref{eq:Csquare} identifies each constant with the
square of a harmonic-function value for which no closed form is known, the
constants must be obtained numerically, and we determined both to higher
precision than the literature records.

\paragraph{Enumeration.} Both sequences were computed exactly by a layered
dynamic program on the lattice-word model of \S\ref{sec:kz}. States are
$(c_1,c_2,d,r)$ at total word length $s$, where $c_0=s-c_1-c_2$, the letter
$d\in\{0,1,2\}$ is the last written, and $r$ records the current run length
capped so as to retain both its parity and whether it exceeds one; the ballot
condition $c_0\ge c_1\ge c_2$ is maintained as an invariant, and a change of
letter is admissible only when the current run length is admissible for the
model --- at least two for $G$, odd for $H$. All arithmetic is exact.

The program reproduces the published initial terms of both sequences. For
$G$, all seventeen terms
\[
  \begin{array}{l}
  0,\,1,\,1,\,5,\,15,\,69,\,304,\,1518,\,7807,\,42314,\,236621,\\[2pt]
  1364570,\,8062975,\,48680547,\,299388670,\,1871463427
  \end{array}
\]
(from $n=1$); for $H$, the twelve published terms
\[
  1,\,2,\,9,\,46,\,306,\,2252,\,18308,\,158872,\,1454570,\,13888112,\,
  137277741,\,1396638636 .
\]
It was run to $n=500$ for $G$, where $G(500)$ has $441$ decimal digits, and to
$n=200$ for $H$.

\paragraph{Acceleration.} Writing $A_N=a(N)\,N^{4}\mu^{-N}$ with $\mu=8$
respectively $\mu=7+5\sqrt2$, so that $A_N$ tends to the constant, and
assuming the expansion $A_N=C(1+a_1/N+a_2/N^{2}+\cdots)$, iterated Richardson
extrapolation
\[
  A^{(k+1)}_N=\frac{(N+1)A^{(k)}_{N+1}-(N-k)A^{(k)}_N}{k+1}
\]
was applied. As an independent check under different hypotheses, Salzer's
transformation was applied to the same data; and a third estimate was obtained
by fitting the increments of the deepest Richardson column, which decay
geometrically, and summing the tail.

\paragraph{Results.} For $C_1$, at $n_{\max}=500$ and Richardson order $28$:
\[
  \begin{array}{ll}
  \text{Richardson:} & 0.5212860590903885965\\
  \text{Salzer:} & 0.5212860590836008916\\
  \text{tail fit:} & 0.5212860590914060289
  \end{array}
\]
agreeing to $7.8\times10^{-12}$, with fitted decay ratio $0.846$. For $C_2$,
at $n_{\max}=200$ and Richardson order $14$:
\[
  \begin{array}{ll}
  \text{Richardson:} & 0.6389278129267359401\\
  \text{Salzer:} & 0.6389278129316865333\\
  \text{tail fit:} & 0.6389278129455990734
  \end{array}
\]
agreeing to $1.9\times10^{-11}$, with fitted decay ratio $0.650$. We therefore
record
\begin{equation}\label{eq:constants}
  C_1=0.52128605909(2),\qquad C_2=0.6389278129(4),
\end{equation}
in each case the digits shown being those on which the three accelerations
agree, with the final parenthesised digit bracketed to within $10^{-11}$.
Both refine the published estimates, which are correct as far as they are
stated. Note that $C_2$ is obtained here \emph{directly}, from its own
enumeration, and not by calibration against $C_1$; the two determinations are
independent.

\paragraph{A cautionary note on extrapolation.} An intermediate determination
of our own, from the same enumeration for $G$ but carried only to $n=110$ with
Richardson order six, gave $0.5212860373$, which is in error from the eighth
significant digit. The cause is visible in the convergence table: the
Richardson column ascends monotonically and at order six has not resolved that
digit, continuing to rise for a further twenty orders. Under-extrapolation of a
monotone sequence produces a value that appears converged and is not, and the
error was detected only when the figure was recomputed from scratch. We record
this because the same failure is available to anyone using acceleration
without an independent second method, and because the agreement of Richardson
with Salzer --- which converges under different hypotheses --- is what makes
\eqref{eq:constants} trustworthy where a single method would not be.

\section{The Geode Challenge}\label{sec:geode}

\subsection{Statement}

Let $S=1+\sum_{k\ge2}t_kS^{k}$ be the hyper-Catalan generating series of
Wildberger--Rubine, restricted to $D$ variables $t_2,\dots,t_{D+1}$, and let
$G$ be the Geode, $S-1=(t_2+\cdots+t_{D+1})G$. Write $G[M^{(D)}]$ for the
coefficient of $(t_2\cdots t_{D+1})^{M}$ in $G$. Amdeberhan, Kauers and
Zeilberger \cite{AKZ} offered a donation for $G[1000^{(5)}]$; Rubine
\cite{Rubine}, who computed the four-dimensional value in $35.9$ hours,
estimated the five-dimensional case at roughly a thousand times that cost and
wrote that new innovations were required.

The hyper-Catalan coefficients have the Erd\'elyi--Etherington closed form
\[
  C[\mathbf m]=\frac{K!}{(1+K-N)!\prod_k m_k!},\qquad
  K=\sum_k k\,m_k,\quad N=\sum_k m_k .
\]
Expanding $1/(t_2+\cdots+t_{D+1})$ geometrically in $t_2$ gives, with $d=D-1$,
\begin{equation}\label{eq:alt}
  G[M^{(D)}]=\sum_{\alpha\in\{0,\dots,M\}^{d}}(-1)^{|\alpha|}
   \binom{|\alpha|}{\alpha}\,
   C\bigl[M{+}1{+}|\alpha|,\,M{-}\alpha_1,\dots,M{-}\alpha_d\bigr],
\end{equation}
a sum with $(M{+}1)^{d}$ terms --- $10^{12}$ at $M=1000$, $D=5$: the wall
described by Rubine.

\subsection{The collapse}

Write $W=\sum_{j=1}^{d}j\alpha_j$, $b_K=M\sum_{k=2}^{D+1}k+2$ and
$b_N=M\sum_{k=1}^{D}k+1$.

\begin{lemma}\label{lem:const}
For every $\alpha$ in \eqref{eq:alt}, the vector $\mathbf m'$ appearing in
$C[\cdot]$ satisfies $N'=DM+1$, independent of $\alpha$, and $K'=b_K-W$.
\end{lemma}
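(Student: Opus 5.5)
The plan is a direct bookkeeping computation from \eqref{eq:alt} and the Erd\'elyi--Etherington formula, and I expect no genuine obstacle. The one point that needs care is to fix the indexing convention before computing anything. The vector $\mathbf m'=\bigl(M{+}1{+}|\alpha|,\,M{-}\alpha_1,\dots,M{-}\alpha_d\bigr)$ lists the multiplicities $m'_k$ for $k=2,3,\dots,D+1$ in order. Thus the first entry is $m'_2$, the exponent of $t_2$. This entry absorbs the geometric expansion of $1/(t_2+\cdots+t_{D+1})$ in $t_2$ together with the factor $t_2$ from $S-1=(t_2+\cdots+t_{D+1})G$, which is where the extra $+1$ comes from. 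The entry $M-\alpha_j$ is $m'_{j+2}$ for $j=1,\dots,d=D-1$. I will state this identification explicitly, since both claims depend on it.

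For $N'$, I sum the entries directly:
\[
N'=(M+1+|\alpha|)+\sum_{j=1}^{d}(M-\alpha_j)=M+1+dM+|\alpha|-|\alpha|=(d+1)M+1=DM+1.
\]
The $|\alpha|$ contributions cancel exactly, so $N'$ does not depend on $\alpha$.

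For $K'$, I weight each entry by its index $k$:
\[
K'=2(M+1+|\alpha|)+\sum_{j=1}^{d}(j+2)(M-\alpha_j).
\]
I then separate the terms that do not involve $\alpha$ from those that do. The $M$-terms give $M\bigl(2+\sum_{k=3}^{D+1}k\bigr)+2=M\sum_{k=2}^{D+1}k+2=b_K$. The $\alpha$-terms give $2|\alpha|-\sum_{j=1}^{d}(j+2)\alpha_j=-\sum_{j=1}^{d}j\alpha_j=-W$. Adding the two parts yields $K'=b_K-W$.

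The only thing that could go wrong is an off-by-one in matching the $\alpha_j$ to the variables $t_{j+2}$, for example if the expansion were in a different variable. As a safeguard I would check the formula against \eqref{eq:alt} in the smallest case $D=2$, $M=0$. Here $\alpha=0$, so $\mathbf m'=(1,0)$, giving $N'=1=DM+1$ and $K'=2=b_K$, and $C[1,0]=2!/(2!\,1!)=1$, matching $G=1+\cdots$. Since $N'$ is constant, the factor $(1+K'-N')!$ in $C[\mathbf m']$ depends on $\alpha$ only through $W$, which is presumably the collapse exploited next.
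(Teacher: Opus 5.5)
Your proposal is correct and follows the paper's proof: summing the entries of $\mathbf m'$ cancels the $|\alpha|$ contributions to give $N'=DM+1$, and weighting each $m'_k$ by $k$ lets the $2|\alpha|$ from $m'_2$ cancel against the $2|\alpha|$ inside $\sum_j(j+2)\alpha_j=W+2|\alpha|$, which leaves $K'=b_K-W$. Your explicit statement of the identification $m'_{j+2}=M-\alpha_j$ usefully clarifies a point the paper leaves implicit.
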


\begin{proof}
$N'=(M{+}1{+}|\alpha|)+\sum_j(M-\alpha_j)=DM+1$. For $K'$, the shift of $m_2$
contributes $2|\alpha|$ while the decrements remove
$\sum_j(j{+}2)\alpha_j=W+2|\alpha|$.
\end{proof}

Consequently $K'!/(1+K'-N)!=\ff{K'}{DM}=(DM)!\binom{K'}{DM}$, a polynomial in
$W$, and $\binom{K'}{DM}=[z^{DM}](1+z)^{b_K-W}$.

\begin{lemma}\label{lem:prod}
With $T[i,W]=\sum_{|\alpha|=i,\,W(\alpha)=W}\prod_j\binom{M}{\alpha_j}$,
$\sum_{i,W}T[i,W]u^{i}y^{W}=\prod_{j=1}^{d}(1+uy^{j})^{M}$.
\end{lemma}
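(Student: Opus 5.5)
The plan is to expand the product factor by factor, since the statement is just the binomial theorem applied to each factor followed by collecting exponents. For each $j=1,\dots,d$ I write
\[
  (1+uy^{j})^{M}=\sum_{\alpha_j=0}^{M}\binom{M}{\alpha_j}u^{\alpha_j}y^{j\alpha_j}.
\]
Multiplying the $d$ expansions gives a sum over all $\alpha=(\alpha_1,\dots,\alpha_d)\in\{0,\dots,M\}^{d}$ of
\[
  \prod_j\binom{M}{\alpha_j}\,u^{\sum_j\alpha_j}\,y^{\sum_j j\alpha_j}
  =\prod_j\binom{M}{\alpha_j}\,u^{|\alpha|}y^{W(\alpha)} .
\]

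The index set of the product expansion is exactly the index set $\{0,\dots,M\}^{d}$ of \eqref{eq:alt}. There is therefore no truncation issue: the range $\alpha_j\le M$ is automatic because $\binom{M}{\alpha_j}=0$ for $\alpha_j>M$. Grouping the terms by the pair $(i,W)=(|\alpha|,W(\alpha))$, the coefficient of $u^{i}y^{W}$ is precisely $\sum_{|\alpha|=i,\,W(\alpha)=W}\prod_j\binom{M}{\alpha_j}=T[i,W]$, which is the claim. Everything here is a finite polynomial identity in $\mathbb{Z}[u,y]$, so no convergence questions arise.

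There is no genuine obstacle. The only point to check is that the exponent bookkeeping matches the weight $W=\sum_j j\alpha_j$ defined before Lemma~\ref{lem:const}, with factor $j$ carrying $y^{j}$, and that $j$ runs over $1,\dots,d$ with $d=D-1$. This also matches the role of $T$ downstream, where the signed multinomial in \eqref{eq:alt} is traded for these product weights once Lemma~\ref{lem:const} has made the $C[\cdot]$ factor depend on $\alpha$ only through $W$.
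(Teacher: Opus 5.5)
Your proof is correct and is the binomial-expansion-and-collect argument the paper intends; the paper states this lemma without proof. One small quibble with your closing remark: Lemma~\ref{lem:const} makes only the numerator $K'!/(1+K'-N')!$ depend on $\alpha$ through $W$ alone, while the denominator factorials of $C[\cdot]$ still depend on $|\alpha|$ and on the individual $\alpha_j$. That is why $T$ must record both $i$ and $W$, and why Lemma~\ref{lem:beta} is needed.
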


\begin{lemma}\label{lem:beta}
$\dfrac{i!}{(M{+}1{+}i)!}=\dfrac{1}{M!}\displaystyle\int_0^1x^{i}(1-x)^{M}dx$.
\end{lemma}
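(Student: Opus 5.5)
This is the Euler Beta integral, and I would prove it by the standard Beta--Gamma evaluation, with an elementary integration-by-parts induction available in case one wants to avoid special functions. The claim is equivalent to
\[
  \int_0^1 x^{i}(1-x)^{M}\,dx=\frac{i!\,M!}{(M+1+i)!}=B(i+1,M+1),
\]
so it suffices to establish this identity for all integers $i,M\ge0$.

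\emph{Route.} I would induct on $M$, holding $i$ arbitrary.

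For $M=0$ the integral is $\int_0^1x^{i}\,dx=1/(i+1)=i!/(i+1)!$, which matches.

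For the inductive step with $M\ge1$, integrate by parts. Differentiate $(1-x)^{M}$ and integrate $x^{i}$ to $x^{i+1}/(i+1)$. The boundary term $\bigl[x^{i+1}(1-x)^{M}/(i+1)\bigr]_0^1$ vanishes at both endpoints, because $M\ge1$ and $i+1\ge1$. This gives
\[
  \int_0^1x^{i}(1-x)^{M}dx=\frac{M}{i+1}\int_0^1x^{i+1}(1-x)^{M-1}dx .
\]
Applying the inductive hypothesis with $(i+1,M-1)$ in place of $(i,M)$, the right side equals
\[
  \frac{M}{i+1}\cdot\frac{(i+1)!\,(M-1)!}{(M+1+i)!}=\frac{i!\,M!}{(M+1+i)!}.
\]
Note that the total $i+M$ is preserved, so the hypothesis is applied within the same level $i+M$. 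Formally, then, the induction is on $M$ with $i$ universally quantified. Dividing by $M!$ gives the lemma.

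\emph{Obstacle.} There is essentially none. The only point needing care is that the boundary term vanishes, which requires $M\ge1$; this is exactly why the base case is taken to be $M=0$ rather than being absorbed into the inductive step.

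I would also note the purpose of the lemma in the section. It converts the $\alpha$-dependent factor $1/(M+1+|\alpha|)!$ arising from $m_2!$ in $C[\cdot]$ into a moment $x^{|\alpha|}$. Combined with Lemma~\ref{lem:prod} (setting $u=x$) and Lemma~\ref{lem:const}, this makes the sum over $\alpha$ collapse to a one-dimensional integral of a product of generating functions. That collapse is how the lemma will be used, but it is not needed for its proof.
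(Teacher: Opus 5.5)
Your proof is correct. The base case $M=0$ checks, the boundary term vanishes for $M\ge1$, and the induction on $M$ with $i$ universally quantified, applying the hypothesis at $(i+1,M-1)$, is sound.

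The paper gives no argument for this lemma. It is Euler's Beta integral $B(i+1,M+1)=\Gamma(i+1)\Gamma(M+1)/\Gamma(i+M+2)$, quoted as a classical fact. Your integration-by-parts induction is therefore a self-contained elementary verification of what the paper takes for granted, and it costs nothing.

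There is one slip outside the proof proper, in your remark on how the lemma is used. The factor the lemma converts is $|\alpha|!/(M+1+|\alpha|)!$; the $|\alpha|!$ comes from the multinomial $\binom{|\alpha|}{\alpha}$. The sign $(-1)^{|\alpha|}$ in \eqref{eq:alt} is absorbed by evaluating Lemma~\ref{lem:prod} at $u=-x$, not $u=x$. With $u=x$ you would not recover the factors $(1+z)^{k}-x$ of $h(x,z)$ in Theorem~\ref{thm:geode}.
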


Substituting Lemma~\ref{lem:const} into \eqref{eq:alt}, using
$1/(\alpha_j!(M-\alpha_j)!)=\binom{M}{\alpha_j}/M!$, then evaluating the
$\alpha$-sum by Lemma~\ref{lem:prod} at $u=-x$ via Lemma~\ref{lem:beta}:

\begin{theorem}\label{thm:geode}
With $d=D-1$, $s_d=d(d+1)/2$,
$E=\bigl(\sum_{k=2}^{D+1}k-s_d\bigr)M+2$ and
$h(x,z)=(1-x)\prod_{k=1}^{d}\bigl((1+z)^{k}-x\bigr)$,
\[
  G[M^{(D)}]=\frac{(DM)!}{(M!)^{D}}\;[z^{DM}]\;(1+z)^{E}
   \int_0^1h(x,z)^{M}\,dx .
\]
\end{theorem}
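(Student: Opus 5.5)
The plan is a direct substitution into \eqref{eq:alt}, followed by three resummations. The identity is a finite rearrangement, so no analytic input is needed.

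\emph{Step 1: rewrite one term of \eqref{eq:alt}.} Fix $\alpha$ with $i=|\alpha|$. By Lemma~\ref{lem:const}, $N'=DM+1$, so
\[
\frac{K'!}{(1+K'-N')!}=(DM)!\binom{K'}{DM}=(DM)!\,[z^{DM}](1+z)^{b_K-W}.
\]
The remaining factors of $C[\mathbf m']$ are $1/\bigl((M{+}1{+}i)!\prod_j(M-\alpha_j)!\bigr)$. Combine these with the multinomial $\binom{i}{\alpha}=i!/\prod_j\alpha_j!$ and use $1/(\alpha_j!(M-\alpha_j)!)=\binom{M}{\alpha_j}/M!$. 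The summand becomes
\[
(-1)^{i}\,\frac{(DM)!}{(M!)^{d}}\cdot\frac{i!}{(M{+}1{+}i)!}\prod_{j}\binom{M}{\alpha_j}\,[z^{DM}](1+z)^{b_K-W}.
\]
Then Lemma~\ref{lem:beta} replaces $i!/(M{+}1{+}i)!$ by $\frac{1}{M!}\int_0^1x^{i}(1-x)^{M}dx$. This produces the stated prefactor $(DM)!/(M!)^{D}$, since $d+1=D$.

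\emph{Step 2: interchange and sum over $\alpha$.} The sum is finite, and both $[z^{DM}]$ and $\int_0^1$ are linear. We can therefore pull them outside and sum
\[
\sum_\alpha\prod_j\binom{M}{\alpha_j}(-x)^{\alpha_j}(1+z)^{-j\alpha_j}
\]
inside. Here $(1+z)^{-W}$ is an honest power series in $z$, so the manipulation is valid in $\mathbb{Q}[x][[z]]$. Each $\alpha_j$ ranges over $\{0,\dots,M\}$, which is exactly the support of $\binom{M}{\alpha_j}$. Lemma~\ref{lem:prod} with $u=-x$ and $y=(1+z)^{-1}$ then gives
\[
\prod_{j=1}^{d}\bigl(1-x(1+z)^{-j}\bigr)^{M}.
\]

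\emph{Step 3: clear the negative powers.} Multiply each factor by $(1+z)^{j}$; this costs $(1+z)^{-Ms_d}$ overall. The integrand becomes
\[
(1+z)^{b_K-Ms_d}\,(1-x)^{M}\prod_{k=1}^d\bigl((1+z)^k-x\bigr)^{M}=(1+z)^{E}h(x,z)^{M},
\]
because $b_K-Ms_d=\bigl(\sum_{k=2}^{D+1}k-s_d\bigr)M+2=E$. Since $E\ge0$ and $h$ is a polynomial, the final expression involves no negative powers. Moving $(1+z)^E$ outside the $x$-integral gives the theorem.

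\emph{Main obstacle.} The step needing care is Step 1's use of $K'!/(1+K'-N')!=(DM)!\binom{K'}{DM}$ across the whole range of $\alpha$. One must check that $K'=b_K-W$ is always a nonnegative integer, and that whenever $1+K'-N'<0$ the term of \eqref{eq:alt} vanishes consistently with $\binom{K'}{DM}=0$. Since $W\le Ms_d$, we get $K'\ge E\ge0$, so the falling-factorial identity is an identity of integers throughout. Beyond that, everything is bookkeeping of signs and factorials. As a final safeguard I would check the formula against \eqref{eq:alt} by exact arithmetic for small $M$ and $D$.
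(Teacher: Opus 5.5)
Your proposal is correct and follows the paper's own route. You substitute Lemma~\ref{lem:const} to write $K'!/(1+K'-N')!=(DM)!\,[z^{DM}](1+z)^{b_K-W}$, absorb the multinomial via $\binom{M}{\alpha_j}/M!$, apply Lemma~\ref{lem:beta}, resum with Lemma~\ref{lem:prod} at $u=-x$, $y=(1+z)^{-1}$, and clear $(1+z)^{-Ms_d}$ to produce $E$ and $h$. The ``main obstacle'' you flag is vacuous: $1+K'-N'=1+\sum_k(k-1)m'_k\ge1$ for every admissible $\mathbf m'$, and in fact $E=2DM+2$, so $K'-DM\ge DM+2$ throughout.
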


\subsection{Algorithm and results}

Modulo a prime $p$, the integral of the polynomial $h(\cdot,z)^{M}$, of
$x$-degree $DM$, is evaluated exactly by interpolatory quadrature at $DM+1$
nodes avoiding $x=1$; at equispaced nodes the Lagrange denominators are
$(-1)^{n-1-j}j!(n{-}1{-}j)!$. For each node $g_j(z)=h(x_j,z)$ has degree $s_d$,
and the coefficients of $g_j^{M}$ satisfy the order-$s_d$ recurrence obtained
from $gP'=Mg'P$, costing $O(s_d)$ operations per coefficient; they are
contracted on the fly against $\binom{E}{DM-r}$ and the quadrature weights.

The cost is $O(D^{2}s_dM^{2})$ arithmetic operations and $O(DM)$ memory per
prime, against $O(M^{3})$ and $O(M^{2})$ for the naive evaluation; roughly
$10^{3}$ to $1.6\times10^{3}$ thirty-bit primes reconstruct the values by the
Chinese remainder theorem, at a measured $2.64$ seconds per prime for $D=5$,
$M=1000$. A proven bound
\[
  |G[M^{(D)}]|\le\frac{b_K!}{(1+b_N)!\,(M!)^{d}}
   \sum_i\frac{i!\binom{dM}{i}}{(M{+}1{+}i)!},
\]
from $\sum_WT[i,W]=\binom{dM}{i}$ and the monotonicity of
$W\mapsto\ff{b_K-W}{DM}$, sizes the reconstruction.

We report $G[1000^{(D)}]$ for $D=5,\dots,10$, all computed for the first time;
the five-dimensional value has $8367$ digits. Validation is threefold: the
$D=4$ run reproduces Rubine's published value exactly, in under three minutes;
the small values $H(1),\dots,H(6)$ agree with the printed values of
\cite{AKZ}, confirming object identity; and every reconstructed value is
checked against fresh primes not used in the reconstruction. We note that the
prose of \cite{Rubine} reports a digit count inconsistent with the growth rate
of the smaller values tabulated there; the digits themselves are consistent
with our computation.

The four-variable recurrences posed as an open problem in \cite{AKZ} remain
open. We observe that the bottleneck the authors report --- term generation ---
is removed by Theorem~\ref{thm:geode}, so the problem may now be within reach
of a guessing computation.

\part{Assessment}

\section{Negative results and errors}\label{sec:negative}

We regard the following record as an integral part of the report. Every error
below was detected by comparison with an independently known quantity; none was
detected by inspection.

\subsection{Errors in generated code}

A degree cap computed from an incorrect bound silently corrupted a sequence
beyond $n\approx104$. The corruption was invisible at small $n$ and was caught
only by fitting an operator to the full sequence --- the lesson being that the
operator-fit control on the complete sequence, not agreement at small $n$, is
the load-bearing gate for extended data.

Floating-point division inside a compiled rational function destroyed every
matrix entry. The resulting noise matrix had full rank, which presented as
``no solution at any ansatz size'' --- a false negative indistinguishable from
a genuine one without the instrumentation of Section~\ref{sec:method}.

Integer coercion silently truncated rational coefficients, causing a correct
operator with half-integer coefficients to be reported as failing its own data
check.

\subsection{Errors of sizing}

Several computations were attempted without estimating their cost. A symbolic
construction requiring only six coefficients was written to compute twelve and
did not terminate. A solver evaluated a large rational function once per matrix
column rather than once per sample point, incurring a factor equal to the
number of columns; restructuring reduced a run from forty-seven seconds to
under one at the same problem size, and made a subsequent stage feasible.

\subsection{Errors of formulation}

A sign error in the annihilator of an exponential factor produced two days of
subtly wrong output. It was diagnosed only when the discrepancy was recognised
as exactly twice a boundary term --- the signature of a mis-specified
exponential --- rather than as a random disagreement.

A decomposition of a $C$-finite sequence into rational and conjugate branches
was verified as mathematics and then found inapplicable: the branches are
rational rather than polynomial in the umbral variables, while the umbral
functional is defined only on polynomials. The mathematics was correct and the
application was not, a failure mode we found particularly hard to detect.

\subsection{Errors of conjecture}

The thin-tail track laws of Section~\ref{sec:chomptracks} each held on five
consecutive values and are false. We know of no better illustration of the
distinction between \emph{supported} and \emph{machine-verified} drawn in
Section~\ref{sec:method}.

\section{Concluding remarks}

Five challenge problems were solved and two reduced. Beyond the solutions the
project yielded results we believe to be of independent interest: the Staircase
Theorem and the parity theorem for square Chomp bars; the explicit algebraic
generating function for reverse-Kreweras diagonal walks together with the
closed form for diagonal-endpoint counts; the identity $S(n)=k(3n+1;1,0)$; the
order-$2$ recurrence for the odd $W$-part of $a_{2,2}$; the integral
representation for diagonal Geode coefficients; the local limit theorem for
excursions of Markov-modulated walks in cones and the square identity
\eqref{eq:Csquare} for its constants; and two-sided information on the cone
exponent of the $(2,1,1)\times[n]$ shape.

The methodological conclusion is that the mode of work described in
Section~\ref{sec:disclosure} is productive but requires a verification
discipline more stringent than is customary, because the characteristic failure
is not absence of output but plausible incorrect output. The protocol of
Section~\ref{sec:method} is, in our experience, the minimum sufficient
response; the record of Section~\ref{sec:negative} is what it caught.

\section*{Acknowledgements}

We thank Doron Zeilberger for posing the problems, for his correspondence
throughout, and for suggesting that this work be collected into a single paper;
Manuel Kauers and Doron Zeilberger for arXiv endorsements; and Shalosh
B.~Ekhad, whose tables calibrated the Chomp work. The analytic backbone of
Section~\ref{sec:kz} follows the work of Denisov, Wachtel and Zhang, without
which the transport performed there would have had nothing to ride on. The AI
system used throughout was Claude, developed by Anthropic.

\appendix

\section{The order-2 operator for $g(n)$}\label{app:op}
The recurrence is $p_0(m)g(m)+p_1(m)g(m-1)+p_2(m)g(m-2)=0$ for $m\ge3$, with:
\begin{align*}
p_0(m) &= 258048 m^{12} + 1169664 m^{11} + 326816 m^{10} - 5324160 m^{9} \\
&\quad - 6298026 m^{8} + 6420627 m^{7} + 12567093 m^{6} - 643740 m^{5} \\
&\quad - 8475047 m^{4} - 2174541 m^{3} + 1536066 m^{2} + 353700 m \\
&\quad - 113400
\end{align*}
\begin{align*}
p_1(m) &= - 13934592 m^{12} - 28325376 m^{11} + 80053056 m^{10} + 126497520 m^{9} \\
&\quad - 162057066 m^{8} - 177863088 m^{7} + 133677768 m^{6} + 95456580 m^{5} \\
&\quad - 43376892 m^{4} - 16667436 m^{3} + 5184126 m^{2} + 334800 m \\
&\quad - 113400
\end{align*}
\begin{align*}
p_2(m) &= 188116992 m^{12} - 87899904 m^{11} - 1694242656 m^{10} + 1930644720 m^{9} \\
&\quad + 3397410216 m^{8} - 5838018732 m^{7} - 342563508 m^{6} + 4226543280 m^{5} \\
&\quad - 1437148908 m^{4} - 737373564 m^{3} + 448283664 m^{2} - 62823600 m \\
&\quad + 6350400
\end{align*}

\section{The exact ODE for the diagonal series}\label{app:ode}
With $A(t)=Q_d(t;t)$: $q_c+q_0A+q_1A'+q_2A''+q_3A'''=0$, where:
\begin{align*}
q_c(t) &= - 81 \left(114 t^{3} - 5\right)
\end{align*}
\begin{align*}
q_0(t) &= 81 \left(3990 t^{6} + 282 t^{3} - 5\right)
\end{align*}
\begin{align*}
q_1(t) &= t \left(646380 t^{6} + 7845 t^{3} - 583\right)
\end{align*}
\begin{align*}
q_2(t) &= 24 t^{2} \left(10773 t^{6} - 99 t^{3} - 8\right)
\end{align*}
\begin{align*}
q_3(t) &= 16 t^{3} \left(3 t - 1\right) \left(57 t^{3} + 1\right) \left(9 t^{2} + 3 t + 1\right)
\end{align*}

\section{Numerical data}\label{app:data}

All values below are exact integers unless stated otherwise, and each was
computed by at least two independent routes.

\subsection*{D.1 Solid SYT of shape $[[n,n],[n,1]]$}

\[
\begin{array}{rl}
n & g(n)\\\hline
1 & 2\\
2 & 48\\
3 & 1038\\
4 & 22566\\
5 & 500144\\
6 & 11302300\\
7 & 259808162\\
8 & 6059911302
\end{array}
\]
The full list to $n=55$, computed by two independent dynamic programs which
agree throughout, is in the repository of Appendix~\ref{app:repo}; the operator
of Appendix~\ref{app:op} annihilates all $56$ terms.

\subsection*{D.2 Restricted permutation counts}

The counts $a_{r,s}(n)$ and $b_{r,s}(n)$ for $4\le n\le8$, computed by direct
enumeration of $\mathfrak S_n$ and reproduced exactly by the transformed counts
of Proposition~\ref{prop:transport}:

\[
\begin{array}{lrrrrr}
(r,s) & n=4 & 5 & 6 & 7 & 8\\\hline
\multicolumn{6}{l}{a_{r,s}(n)}\\
(2,2) & 18 & 75 & 410 & 2729 & 20906\\
(2,3) & 20 & 88 & 480 & 3082 & 23232\\
(3,2) & 20 & 88 & 480 & 3082 & 23232\\
(3,3) & 22 & 98 & 534 & 3414 & 25498\\
(2,4) & --- & 102 & 544 & 3480 & 25944\\
(4,2) & --- & 102 & 544 & 3480 & 25944\\
(3,4) & --- & 108 & 588 & 3768 & 28032\\
(4,3) & --- & 108 & 588 & 3768 & 28032\\
(4,4) & --- & 114 & 628 & 4062 & 30360\\[2pt]
\multicolumn{6}{l}{b_{r,s}(n)}\\
(2,2) & 16 & 44 & 200 & 1288 & 9512\\
(2,3) & 16 & 64 & 336 & 1776 & 12224\\
(3,2) & 16 & 64 & 336 & 1776 & 12224\\
(3,3) & 20 & 80 & 384 & 2240 & 15424\\
(2,4) & --- & 84 & 400 & 2352 & 16512\\
(4,2) & --- & 84 & 400 & 2352 & 16512\\
(3,4) & --- & 96 & 480 & 2784 & 19200\\
(4,3) & --- & 96 & 480 & 2784 & 19200\\
(4,4) & --- & 108 & 544 & 3264 & 23040
\end{array}
\]
Here $a_{2,2}=\texttt{A189281}$ and $b_{2,2}=\texttt{A110128}$. The equalities
$a_{r,s}=a_{s,r}$ and $b_{r,s}=b_{s,r}$ visible in the table are instances of
the transpose symmetry.

\subsection*{D.3 Excusal sizes}

The sizes of the exceptional data of Proposition~\ref{prop:transport},
confirmed constant in $n$ over $4\le n\le8$ in every case:

\[
\begin{array}{lcccccccc}
(r,s) & (2,2) & (2,3) & (2,4) & (3,2) & (3,3) & (3,4) & (4,2) & (4,4)\\\hline
(r-1)+(s-1) & 2 & 3 & 4 & 3 & 4 & 5 & 4 & 6\\
(r-1)+2(s-1) & 3 & 5 & 7 & 4 & 6 & 8 & 5 & 9
\end{array}
\]

\subsection*{D.4 The decomposition of $a_{2,2}$}

The $W$- and $X$-parts of Section~\ref{sec:a22}, by parity:

\[
\begin{array}{rrrrr}
m & e_W(m) & e_X(m) & o_W(m) & o_X(m)\\\hline
1 & 2 & 0 & \text{---} & \text{---}\\
2 & 14 & -4 & 4 & -1\\
3 & 362 & -48 & 64 & -11\\
4 & 18806 & -2100 & 2428 & -301\\
5 & \text{---} & -146704 & 165016 & \text{---}\\
6 & \text{---} & -15903636 & \text{---} & \text{---}
\end{array}
\]
The identities $e_W(m)-e_X(m)=a_{2,2}(2m)$ and $o_W(m)-o_X(m)=a_{2,2}(2m-1)$
provide the cross-check used in Section~\ref{sec:a22}: at $m=4$,
$18806-(-2100)$ is not the relevant combination; rather
$e_X(4)=e_W(4)-a_{2,2}(8)=18806-20906=-2100$ and
$o_X(4)=o_W(4)-a_{2,2}(7)=2428-2729=-301$, both of which the chain of
\eqref{eq:eX} predicted before being shown them.

\subsection*{D.5 Thin-tail Chomp tracks}

The unique $q$ with $(b^{p},1^{q})$ a $P$-position, in the notation of
Proposition~\ref{prop:tracksunique}:

\[
\begin{array}{rrrrrrr}
p\backslash b & 2 & 3 & 4 & 5 & 6 & 7\\\hline
1 & 1 & 2 & 3 & 4 & 5 & 6\\
2 & 1 & 2 & 4 & 5 & 7 & 8\\
3 & 1 & 3 & 5 & 7 & 9 & \mathbf{12}\\
4 & 1 & 4 & 7 & 10 & 13 & \mathbf{14}\\
5 & 1 & 5 & 7 & 13 & 15 & 19\\
6 & 1 & 5 & 10 & 9 & 16 & \text{---}
\end{array}
\]
The boldface entries are the two falsifications discussed in
Section~\ref{sec:chomptracks}: the law $q=2b-3$ predicts $q(7,3)=11$ and the
law $q=3b-5$ predicts $q(7,4)=16$, against the computed $12$ and $14$.

\subsection*{D.6 Two-value Chomp grids}

The $P$-cells $(p,c)$ of the two-value diagrams $(b^{p},c^{q})$ with $p+q=a$,
for small bars. Each grid contains exactly one cell, in accordance with the
uniqueness of the winning move for these sizes, and the transpose symmetry
$(p,c)\mapsto(c,p)$ is visible on comparing $a\times b$ with $b\times a$:

\[
\begin{array}{rcccccc}
a\backslash b & 2 & 3 & 4 & 5 & 6 & 7\\\hline
2 & (1,1) & (1,2) & (1,3) & (1,4) & (1,5) & (1,6)\\
3 & (2,1) & (1,1) & (1,2) & (2,3) & (1,3) & (2,4)\\
4 & (3,1) & (2,1) & (1,1) & (2,2) & (1,2) & (2,3)\\
5 & (4,1) & (3,2) & (2,2) & (1,1) & (2,3) & (1,2)\\
6 & (5,1) & (3,1) & (2,1) & (3,2) & (1,1) & (4,4)\\
7 & (6,1) & (4,2) & (3,2) & (2,1) & (4,4) & (1,1)
\end{array}
\]
Every diagonal entry is $(1,1)$, the bite at $(2,2)$, in accordance with
Theorem~\ref{thm:square}.

\subsection*{D.7 The cone eigenvalue}

Convergence of the method of particular solutions for the
$(\pi/3,\pi/2,2\pi/3)$ triangle of Section~\ref{sec:cone}, as the number
$n_b$ of basis functions increases; $\sigma$ is the Betcke--Trefethen subspace
angle at the minimum:

\[
\begin{array}{rll}
n_b & \nu & \sigma\\\hline
8 & 3.2409029943555 & 1.11\times10^{-6}\\
10 & 3.2409029943607 & 9.16\times10^{-8}\\
12 & 3.2409029943607 & 8.42\times10^{-9}\\
14 & 3.2409029943607 & 8.33\times10^{-10}
\end{array}
\]
Calibration on the $A_3$ chamber, whose exact value is $\nu=6$, gives
$5.40\times10^{-10}$ error at the same settings. The Rayleigh--Ritz upper
bound, computed on a sixteen-dimensional admissible trial space
(\texttt{lambda\_enclosure.py}, archived run), is
$\lambda_1\le13.7506809520491$, i.e.\ $\nu\le3.24174838171263$, with the second
Ritz value $26.0083$; a twelve-dimensional space gives the weaker
$\lambda_1\le13.7540986066788$.

\subsection*{D.8 Geode values}

$G[1000^{(D)}]$ was computed for $D=4,\dots,10$. The five-dimensional value,
which is the object of the challenge, has $8367$ decimal digits. The
four-dimensional value serves as the acceptance test and agrees with Rubine's
published digits at both ends. The full decimal expansions are in the
repository of Appendix~\ref{app:repo}; reproducing them here would occupy
several pages to no purpose.

\section{Repository index}\label{app:repo}

All material accompanying this paper is consolidated in a single repository,
\begin{center}
\texttt{github.com/jaideepsaipadhi/zeilberger-challenges}
\end{center}
which supersedes the five separate repositories in which the work was
originally released. Its layout follows the parts of this paper:

\begin{center}
\begin{tabular}{ll}
\toprule
Directory & Contents\\
\midrule
\texttt{paper/} & this paper, source and compiled\\
\texttt{chomp/} & \S\ref{sec:chomp}: three solvers, track dumps, bite tables\\
\texttt{restricted-permutations/} & \S\ref{sec:ch3}: note and four verification scripts\\
\texttt{solid-syt/} & \S\ref{sec:syt}: paper, derivation trail, operator check\\
\texttt{geode/} & \S\ref{sec:geode}: note, CRT driver, computed values\\
\texttt{kz-constants/} & \S\ref{sec:kz}: paper, verification suite, constant determination\\
\texttt{a22-partial/} & \S\ref{sec:a22}: the reduction and its obstruction\\
\texttt{cone-exponent/} & \S\ref{sec:cone}: eigenvalue determination and bounds\\
\bottomrule
\end{tabular}
\end{center}

Each directory carries the complete contents of the corresponding original
repository --- engines, logs, data and papers --- together with the
verification scripts written for this paper. Every quantitative claim made
here is reproducible from the code and data there, and each script states in
its header what it checks and against what.

\end{document}